\newtheorem{theorem}{Theorem}[section]
\newtheorem{lemma}[theorem]{Lemma}
\newtheorem{proposition}[theorem]{Proposition}
\newtheorem{corollary}[theorem]{Corollary}
\newtheorem{definition}[theorem]{Definition}
\newtheorem{remark}[theorem]{Remark}
\newtheorem{question}[theorem]{Question}
\newcommand\supp{\mathop{\rm supp}}
\newcommand\esssup{\mathop{\rm esssup}}
\newcommand\id{\mathop{\rm id}}
\newcommand\mm{\mathop{\rm m}}
\newcommand\hh{\mathop{\rm h}}
\newcommand\nph{\varphi}
\newcommand\cb{\mathop{\rm cb}}
\newcommand\vn{\mathop{\rm VN}}
\newcommand{\cl}[1]{\mathcal{#1}}
\newcommand{\bb}[1]{\mathbb{#1}}
\newcommand\Bd{\mathop{\cl B(H)}}
\newcommand{\ve}{\Vert}
\newcommand{\ip}[2]{\ensuremath{\left\langle #1 , #2\right\rangle}}
\newcommand{\vip}[2]{\ensuremath{\left\langle #1 | #2\right\rangle}}
\begin{document}

\title{Herz-Schur multipliers of dynamical systems}

\author[A. McKee]{A. McKee}
\address{Pure Mathematics Research Centre,
  Queen's University Belfast, Belfast BT7 1NN, United Kingdom}
\email{amckee240@qub.ac.uk}

\author[I. Todorov]{I. G. Todorov}
\address{Pure Mathematics Research Centre,
  Queen's University Belfast, Belfast BT7 1NN, United Kingdom}
\email{i.todorov@qub.ac.uk}

\author[L. Turowska]{L. Turowska}
\address{Department of Mathematical Sciences,
Chalmers University of Technology and  the University of Gothenburg,
Gothenburg SE-412 96, Sweden}
\email{turowska@chalmers.se}

\date{}
\maketitle

\tableofcontents

\section{Introduction}

The notion of a Schur multiplier has its origins in the work of I. Schur
in the early 20$^{\rm th}$ century, and is based on entry-wise (or Hadamard) product of matrices.
More specifically, a bounded function
$\nph:\mathbb{N}\times\mathbb{N}\rightarrow\mathbb{C}$ is called a
Schur multiplier if $(\nph(i,j)a_{i,j})$ is the matrix of a bounded
linear operator on $\ell^2$ whenever $(a_{i,j})$ is such.
A concrete description of Schur multipliers, which found numerous
applications thereafter, was given by A. Grothendieck in his
{\it R$\acute{e}$sum$\acute{e}$} \cite{Gro} (see also \cite{Pi}).
A measurable version of Schur multipliers was
developed by M. S. Birman and M. Z. Solomyak (see \cite{BS4} and the
references therein) and V. V. Peller \cite{peller}. More
concretely, given standard measure spaces $(X,\mu)$ and $(Y,\nu)$
and a function $\nph : X\times Y \rightarrow \bb{C}$, one defines a
linear transformation $S_{\varphi}$ on the space of all
Hilbert-Schmidt operators from $H_1 = L^2(X,\mu)$ to $H_2 =
L^2(Y,\nu)$ by multiplying their integral kernels by $\varphi$; if
$S_{\varphi}$ is bounded in the operator norm (in which case $\nph$
is called a \emph{measurable Schur multiplier}), it is extended to
the space $\cl K(H_1,H_2)$ of all compact operators from $H_1$ into
$H_2$ by continuity. The map $S_{\varphi}$ is defined on the space
$\cl B(H_1,H_2)$ of all bounded linear operators from $H_1$ into
$H_2$ by taking the second dual of the constructed map on $\cl
K(H_1,H_2)$. A characterisation of measurable Schur multipliers,
extending Grothendieck's result, was obtained in
\cite{haag} and \cite{peller} (see also \cite {kp} and \cite{spronk}). Namely, a function
$\nph\in L^{\infty}(X\times Y)$ was shown to be a Schur multiplier
if and only if $\nph$ coincides almost everywhere with a function of
the form $\sum_{k=1}^{\infty} a_k(x) b_k(y)$, where $(a_k)_{k\in
\bb{N}}$ and $(b_k)_{k\in \bb{N}}$ are families of essentially
bounded measurable functions such that $\esssup_{x\in X}
\sum_{k=1}^{\infty} |a_k(x)|^2 < \infty$ and $\esssup_{y\in Y}
\sum_{k=1}^{\infty} |b_k(y)|^2 < \infty$.

Among the large number of applications of Schur multipliers is the description
of the space $M^{\rm cb}A(G)$ of completely bounded multipliers
(also known as Herz-Schur multipliers) of the Fourier algebra $A(G)$
of a locally compact group $G$, introduced by J. de Canni\`{e}re and U. Haagerup in \cite{ch}.
Namely, as shown by M. Bo\.{z}ejko and G. Fendler \cite{bf} ,
$M^{\rm cb}A(G)$ can be isometrically identified with the space of all
Schur multipliers on $G\times G$ of Toeplitz type. An alternative proof of this
result was given by P. Jolissaint \cite{j}.

Herz-Schur multipliers have been highly instrumental in operator algebra theory,
providing the route to defining and studying a number of
approximation properties of group C*-algebras and group von Neumann algebras
(we refer the reader to \cite{hk}, \cite{bo} and \cite{knudby}).
Here one uses the fact that every Herz-Schur multiplier on a locally compact group $G$
gives rise to a (completely bounded) map on the von Neumann algebra $\vn(G)$
of $G$, leaving invariant the reduced C*-algebra $C_r^*(G)$ of $G$.

In view of the large number of applications of
Herz-Schur multipliers in operator algebra theory, it is natural
to seek generalisations going beyond the context of group algebras.
The main goal of this paper is to extend the notion of Herz-Schur multipliers
to the setting of non-commutative dynamical systems.
Given a C*-algebra $A$, a locally compact group $G$, and an action $\alpha$
of $G$ on $A$, we define transformations on the (reduced) crossed product
$A\rtimes_{r,\alpha} G$ of $A$ by $G$, which, in the case $A = \bb{C}$, reduce to
the classical Herz-Schur multipliers and, in the case of a
discrete group $G$, to multipliers defined  recently by E. Bedos and R. Conti in \cite{bc}.   More generally, we introduce
Schur $A$-multipliers which, in the case $A = \bb{C}$, reduce to the classical
measurable Schur multipliers.
In Section \ref{s_smvna}, we establish a characterisation of Schur $A$-multipliers
that generalises the classical description of Schur multipliers
(Theorem \ref{th_chschura}).
We exhibit a large class of Schur $A$-multipliers
defined in terms of Hilbert $A$-bimodules, and show that it exhausts all Schur $A$-multipliers
in the case $A$ is finite-dimensional (Theorem \ref{c_hilmodc}).
In Section \ref{s_ggamma}, we prove a transference theorem in the new setting,
identifying isometrically the Hezr-Schur multipliers of
the dynamical system $(A,G,\alpha)$ with the invariant part of the Schur $A$-multipliers
(see Theorems \ref{th_tr} and \ref{th_invpart}).
We introduce bounded multipliers of $(A,G,\alpha)$ and relate them to
Herz-Schur $(A,G,\alpha)$-multipliers, extending a corresponding result from \cite{ch}.
In Section \ref{s_mwscp}, we provide a description of a more general
and closely related class of multipliers, namely, Herz-Schur multipliers associated with
weak* closed crossed products,
as the commutant of the scalar valued Herz-Schur multipliers associated with elements of
$M^{\rm cb}A(G)$ (Theorem \ref{th_commute}).
While in the case $A = \bb{C}$ this description is straightforward,
here we need to use structure theory of crossed products and some recent results from \cite{akt}.

The rest of the paper is devoted to special classes of Herz-Schur multipliers.
Namely, in Section \ref{s_scm}, we consider multipliers naturally associated with the
Haagerup tensor product of two copies of $A$, and multipliers defined on groupoids.
In the former case, we relate our notion to examples of Herz-Schur multipliers exhibited in \cite[Theorem 4.5]{bc}
in the case of a discrete group.
In the latter case, we show that completely bounded multipliers of the Fourier algebra
of a groupoid, defined in \cite{renault}, form a subclass of the class of Herz-Schur multipliers
introduced in the present work.

The results in Section \ref{s_cm} were our original motivation for the present paper.
Here, we consider the case of a locally compact abelian group $G$
and its canonical action $\alpha$ on the C*-algebra $C^*(\Gamma)$ of the dual group $\Gamma$.
We focus on a special class $\frak{F}(G)$ of Herz-Schur multipliers, which we call convolution multipliers,
and its natural subclass $\frak{F}_{\theta}(G)$ of weak* extendible convolution multipliers.
We show that the Fourier-Stieltjes algebras $B(G)$ and $B(\Gamma)$ can both be viewed
as subspaces of $\frak{F}_{\theta}(G)$, while $\frak{F}_{\theta}(G)$ is a subspace of their Fubini product.
When the crossed product of $C^*(\Gamma)\rtimes_{\alpha} G$ is canonically identified
with the space $\cl K(L^2(G))$ of all compact operators on $L^2(G)$,
the elements $u$ of $B(G)$ give rise to the measurable Schur multipliers corresponding to
$u$ {\it via} the aforementioned Bo\.{z}ejko-Fendler classical transference theorem,
while the elements of $B(\Gamma)$ correspond to a well-known class of
completely bounded maps, arising from a representation of the measure algebra $M(G)$
of $G$ on $\cl K(L^2(G))$, studied in a variety of contexts
in both operator algebra theory and quantum information theory, and by
a number of authors including F. Ghahramani \cite{g},
M. Neufang and V. Runde \cite{neurun}, M. Neufang, Zh.-J. Ruan and N. Spronk \cite{neuruaspro}
and E. St\o{}rmer \cite{stormer}.
The main result of the section are Theorem \ref{th_me} and \ref{th_me2}, where we
identify the set $\frak{F}_\theta(G)$, and an associated subset,
of convolution multipliers of $G$ as subsets of the joint commutant of the
two families described above.

The paper uses various notions from Operator Space Theory;
we refer the reader to \cite{blm}, \cite{er}, \cite{paulsen} or \cite{pisier} for the basics.
For background and notation on crossed products, which will be needed
in Sections \ref{s_ggamma}, \ref{s_scm} and \ref{s_cm}, we refer the reader to \cite{dw}.

\medskip

We finish this section with setting some notation.
If $\cl E$ and $\cl F$ are vector spaces, we let $\cl E\odot\cl F$
be their algebraic tensor product.
For a Banach space $\cl X$, we let
$\cl B(\cl X)$ (resp. $\cl K(\cl X)$)
be the algebra of all bounded linear (resp. compact) operators on $\cl X$, and
denote by $I_{\cl X}$ the identity operator on $\cl X$.
If $H$ and $K$ are Hilbert spaces, we denote by $H\otimes K$
their Hilbertian tensor product; for operators $S\in \cl B(H)$ and $T\in \cl B(K)$,
we let $S\otimes T$ be the bounded operator on $H\otimes K$ given by
$(S\otimes T)(\xi\otimes\eta) = S\xi \otimes T\eta$.
The (norm closed) spacial tensor product of two (norm closed)
operator spaces $\cl U\subseteq \cl B(H)$ and
$\cl V\subseteq \cl B(K)$ will be denoted by $\cl U\otimes\cl V$.
If $\cl U$ and $\cl V$ are weak* closed, their
weak* spacial tensor product will be denoted by $\cl U\bar\otimes\cl V$.

\section{Schur $A$-multipliers}\label{s_smvna}

Let $(X,\mu)$ be a standard measure space; this means that $\mu$ is a Radon measure
with respect to some
complete metrisable separable locally compact topology
(called an \emph{admissible} topology)
on $X$.
For $p = 1,2$ and a Banach space $\cl E$, we write
$L^p(X,\cl E)$ for the corresponding Lebesgue space of all
(equivalence classes of) weakly measurable
$p$-summable $\cl E$-valued functions on $X$ (see {\it e.g.} \cite[Appendix B]{dw}).
If $H$ and $K$ are separable Hilbert spaces and $\cl E\subseteq \cl B(H,K)$ is
a weak* closed subspace, let
$L^{\infty}(X,\cl E)$ be the space of all
(equivalence classes of) bounded
$\cl E$-valued functions $T$ on $X$ such that,
for every $\xi\in H$ and every $\eta\in K$,
the functions $x\to T(x)\xi$ and
$x\to T(x)^*\eta$ are weakly measurable.
Note that $L^{\infty}(X,\cl E)$ contains all bounded weakly measurable
functions from $X$ into $\cl E$.
Analogously to \cite[Chapter IV, Section 7]{takesaki1},
we often identify an element $g$ of $L^{\infty}(X,\cl E)$ with the operator $D_g$
from $L^2(X,H)$ into $L^2(X,K)$ given by
$(D_g\xi)(x) = g(x)(\xi(x))$, $x\in X$.

We write $\|\cdot\|_p$ for the norm on $L^p(X,\cl E)$, $p = 1,2,\infty$.
In the case $\cl E$ coincides with the complex field $\bb{C}$, we simply write $L^p(X)$.
If $f\in L^p(X)$ and $a\in \cl E$, we let $f\otimes a\in L^p(X,\cl E)$ be the function given by
$(f\otimes a)(x) = f(x) a$, $x\in X$.

{\it We fix throughout the section a separable Hilbert space $H$.}
For $a\in L^{\infty}(X)$, let $M_a \in \cl B(L^2(X))$ be the operator given by $M_a\xi = a\xi$;
set
$$\cl D_X = \{M_a : a\in L^{\infty}(X)\}.$$
Note that the identification
$L^2(X)\otimes H \equiv L^2(X,H)$
yields a unitary equivalence between
$L^{\infty}(X,\cl B(H))$ and $\cl D_X\bar\otimes \cl B(H)$ \cite[Theorem 7.10]{takesaki1}.

Let $(Y,\nu)$ be a(nother) standard measure space.
We equip the direct products $X\times Y$ and $Y\times X$ with the
corresponding product measures.
It is easy to see that, if $k\in L^2(Y\times X, \cl B(H))$ and $\xi\in L^2(X,H)$ then, for almost all $y\in Y$,
the function $x\to k(y,x)\xi(x)$ is weakly measurable;
moreover,
\begin{eqnarray}\label{eq_tou}
\int_X \ve k(y,x)\xi(x)\ve d\mu(x)
& \leq &  \int_X \ve k(y,x)\ve\ve\xi(x)\ve d\mu(x) \nonumber\\
& \leq & \|\xi\|_2 \left(\int_X \ve k(y,x)\ve^2 d\mu(x)\right)^{1/2}. 
\end{eqnarray}
It follows that the formula
\begin{equation}\label{d_defk}
(T_k\xi)(y)=\int_X k(y,x)\xi(x) d\mu(x), \ \ \  y\in Y,
\end{equation}
defines a (weakly measurable) function $T_k\xi : Y\to H$.

\begin{lemma}\label{l_Tk}
Let $k\in L^2(Y\times X, \cl B(H))$. Equation (\ref{d_defk}) defines
a bounded operator $T_k : L^2(X,H)\to L^2(Y,H)$ with
$\|T_k\|\leq \|k\|_2$.
Moreover, $T_k = 0$ if and only if $k = 0$ almost everywhere.
\end{lemma}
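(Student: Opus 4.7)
The plan is to read off the norm bound directly from the estimate already set up in (\ref{eq_tou}) and then to prove injectivity by a Fubini/density argument using separability of $H$.

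First I would handle the norm bound. The paragraph preceding the lemma already supplies two ingredients: weak measurability of $y\mapsto (T_k\xi)(y)$, and the pointwise estimate (\ref{eq_tou}), which reads
\[
\|(T_k\xi)(y)\| \;\leq\; \|\xi\|_2 \Bigl(\int_X \|k(y,x)\|^2\, d\mu(x)\Bigr)^{1/2}.
\]
Squaring and integrating over $y$, and applying Tonelli to the non-negative measurable function $(y,x)\mapsto \|k(y,x)\|^2$, I get
\[
\int_Y \|(T_k\xi)(y)\|^2\, d\nu(y) \;\leq\; \|\xi\|_2^2 \int_Y\int_X \|k(y,x)\|^2\, d\mu(x)\, d\nu(y) \;=\; \|\xi\|_2^2\, \|k\|_2^2 .
\]
The same Tonelli argument shows that for $\nu$-almost every $y$ the function $x\mapsto k(y,x)\xi(x)$ is Bochner integrable, so $(T_k\xi)(y)$ is defined a.e. Combined with weak measurability, this places $T_k\xi$ in $L^2(Y,H)$ and yields $\|T_k\|\leq\|k\|_2$.

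For the injectivity statement, the nontrivial direction assumes $T_k=0$ and concludes $k=0$ almost everywhere. I would test against elementary vectors: for $f\in L^2(X)$, $g\in L^2(Y)$ and $h,h'\in H$, take $\xi=f\otimes h\in L^2(X,H)$ and $\eta=g\otimes h'\in L^2(Y,H)$. Since $\langle T_k\xi,\eta\rangle=0$ and, by $|\langle k(y,x)h,h'\rangle|\leq\|k(y,x)\|\|h\|\|h'\|$, the scalar function $F_{h,h'}(y,x):=\langle k(y,x)h,h'\rangle$ lies in $L^2(Y\times X)$, Fubini gives
\[
\int_{Y\times X} \overline{g(y)}\, f(x)\, F_{h,h'}(y,x)\, d(\nu\times\mu)(y,x) \;=\; 0 .
\]
The linear span of functions $(y,x)\mapsto g(y)\overline{f(x)}$ is dense in $L^2(Y\times X)$, so $F_{h,h'}=0$ in $L^2(Y\times X)$, i.e.\ $\langle k(y,x)h,h'\rangle=0$ for a.e.\ $(y,x)$.

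The final step uses separability of $H$. Fix countable dense subsets $\{h_n\},\{h_m'\}\subseteq H$; for each pair $(n,m)$ there is a null set $N_{n,m}\subseteq Y\times X$ off which $\langle k(y,x)h_n,h_m'\rangle=0$. The countable union $N=\bigcup_{n,m}N_{n,m}$ is still null, and for $(y,x)\notin N$ the bilinear form $(h,h')\mapsto\langle k(y,x)h,h'\rangle$ vanishes on a dense set and is continuous, hence vanishes identically, giving $k(y,x)=0$. I expect no substantial obstacle here: the main points are simply invoking Tonelli for the scalar measurable function $\|k\|^2$, checking that the elementary tensors are dense in $L^2(Y\times X)$, and using separability to collapse a two-parameter family of null sets into one.
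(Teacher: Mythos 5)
Your proposal is correct and follows essentially the same route as the paper: the norm bound comes from squaring the pointwise estimate (\ref{eq_tou}) and integrating over $Y$, and the injectivity is obtained by pairing $T_k$ against elementary tensors, using density of such tensors in $L^2(Y\times X)$ and the separability of $H$ to reduce to countably many null sets. The only difference is that you spell out the density and null-set bookkeeping more explicitly than the paper does, which is harmless.
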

\begin{proof}
Let $\xi\in L^2(X,H)$. Then, by (\ref{eq_tou}),
\begin{eqnarray*}
\ve T_k\xi\ve^2
& = & \int_Y \ve T_k\xi(y)\ve^2 d\nu(y)\\
& \leq & \|\xi\|_2^2 \int_Y \int_X \ve k(y,x)\ve^2 d\mu(x) d\nu(y) =  \|k\|_2^2 \|\xi\|_2^2.
\end{eqnarray*}
Thus, $T_k$ is bounded and its norm does not exceed $\|k\|_2$.

It is clear that if $k = 0$ almost everywhere then $T_k = 0$.
Conversely,  suppose that
$T_k = 0$. Choose a countable dense subset $\{e_i\}_{i\in \bb{N}}$ of $H$.
If $\xi\in L^2(X)$ and $\eta\in L^2(Y)$ then
$$\int_{X\times Y} \langle k(y,x)e_i,e_j\rangle \xi(x)\overline{\eta(y)}dx dy
= \langle T_k(\xi\otimes e_i),\eta\otimes e_j\rangle = 0,$$
and it follows that $\langle k(y,x)e_i,e_j\rangle = 0$ almost everywhere, for all $i,j$.
Since $k(y,x)$ is a bounded operator,
$k(y,x) = 0$ for almost all $(x,y)$.
\end{proof}

If $\cl M\subseteq \cl B(H)$ is a C*-subalgebra, let
$$\cl S_2(Y\times X,\cl M) = \{T_k : k\in L^2(Y\times X,\cl M)\}.$$
Note that, if $w\in L^2(Y\times X)$ and $a\in \cl M$ then
\begin{equation}\label{eq_wta}
T_{w\otimes a} = T_w \otimes a.
\end{equation}
Letting
$$\cl K \stackrel{def}{=}\cl K(L^2(X),L^2(Y))$$
be the space of all compact operators from $L^2(X)$ into $L^2(Y)$, we have that
$\cl S_2\odot\cl M$ is norm dense in $\cl K\otimes \cl M$
(here $\cl S_2$ denotes the space of all Hilbert-Schmidt opertors from $L^2(X)$ into $L^2(Y)$).
We conclude that $\cl S_2(Y\times X,\cl M)$ is norm dense in $\cl K\otimes \cl M$
and equip it with the operator space structure
arising from its inclusion into $\cl K\otimes\cl M$.

{\it We fix throughout the section a non-degenerate separable C*-algebra $A\subseteq \cl B(H)$.}
If $B$ is a(nother) C*-algebra, we denote by $CB(A,B)$ the
space of all completely bounded linear maps from $A$ into $B$ and write $CB(A) = CB(A,A)$.
A function $\nph : X\times Y\to CB(A,\cl B(H))$ will be called \emph{pointwise measurable}
if, for every $a\in A$, the function $(x,y)\to \nph(x,y)(a)$
from $X\times Y$ into $\cl B(H)$ is weakly measurable.
Let $\nph : X\times Y\to CB(A,\cl B(H))$ be a bounded pointwise measurable function.
For $k\in L^2(Y\times X,A)$, let $\nph\cdot k : Y\times X \to \cl B(H)$ be
the function given by
$$(\nph\cdot k)(y,x) = \nph(x,y)(k(y,x)), \ \ \ (y,x)\in Y\times X.$$
It is easy to show that
$\nph\cdot k$ is weakly measurable;
since $\nph$ is bounded, $\nph\cdot k\in L^2(Y\times X, \cl B(H))$ and
$$\|\nph\cdot k\|_2 \leq \|\nph\|_{\infty}\|k\|_2.$$
Let
$$S_{\nph} : \cl S_2(Y\times X,A)\to \cl S_2(Y\times X,\cl B(H))$$
be the linear map given by
$$S_{\nph}(T_k) = T_{\nph\cdot k}, \ \ \ k\in L^2(Y\times X,A).$$
By Lemma \ref{l_Tk}, $S_\nph$ is well-defined.

\begin{definition}\label{d_schura}
A bounded pointwise measurable map
$$\varphi : X\times Y\to CB(A,\cl B(H))$$ will be called a
\emph{Schur $A$-multiplier} if the map
$S_{\varphi}$ is completely bounded.
\end{definition}

It follows from the discussion after Lemma \ref{l_Tk} that
a bounded pointwise measurable function
$\nph : X\times Y \to CB(A,\cl B(H))$ is a Schur $A$-multiplier
if and only if the map $S_{\nph}$ possesses a completely bounded extension to a map
from $\cl K \otimes A$ into $\cl K\otimes \cl B(H)$ (which will still be denoted by $S_{\nph}$).

We let $\frak{S}(X,Y; A)$ be the space of all Schur $A$-multipliers and endow it
with the norm
\begin{equation}\label{eq_non}
\|\nph\|_{\frak{S}} = \|S_{\nph}\|_{\cb};
\end{equation}
it follows from
Lemma \ref{l_Tk} that if
$S_{\nph} = 0$ then $\nph = 0$ almost everywhere, and hence
(\ref{eq_non}) indeed defines a norm on $\frak{S}(X,Y; A)$.

Note that Schur $\bb{C}$-multipliers coincide with the
classical (measurable) Schur multipliers \cite{peller}, \cite{kp}.

A special role in our considerations will be played by Schur $A$-multipliers $\nph$
for which $\nph(x,y)\in CB(A)$ for  all $(x,y)\in X\times Y$, that is, ones for which
the range of $\nph(x,y)$ is in $A$.
In this case, $S_{\nph}$ is a map on $\cl S_2(Y\times X,A)$.
We let $\frak{S}_0(X,Y; A)$ be the space of all such
Schur $A$-multipliers.
The next proposition shows that $\frak{S}_0(X,Y; A)$
does not depend on the faithful *-representation of $A$.

\begin{proposition}\label{p_indep}
Let $\theta : A\to \cl B(K)$ be a faithful *-representation of $A$ on a separable Hilbert space $K$.
A bounded pointwise measurable map
$\nph : X\times Y\to CB(A)$ is a Schur $A$-multiplier
if and only if the (bounded pointwise measurable)
map $\nph_{\theta} : X\times Y\to CB(\theta(A))$,
given by $\nph_{\theta}(x,y)(\theta(a)) = \theta (\nph(x,y)(a))$, $a\in A$,
is a Schur $\theta(A)$-multiplier.
Moreover, $\|\nph\|_{\frak{S}}=\|\nph_\theta\|_{\frak{S}}$.
\end{proposition}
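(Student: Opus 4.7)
The plan is to use the faithful $*$-representation $\theta$ to build a completely isometric identification $\Theta$ between the ambient operator spaces of $S_{\nph}$ and $S_{\nph_{\theta}}$, and to show that $\Theta$ intertwines the two maps. Since every faithful $*$-homomorphism between C*-algebras is a complete isometry and $\cl K$ sits inside a nuclear C*-algebra, the map $\id_{\cl K}\otimes\theta$ extends to a completely isometric $*$-isomorphism
\begin{equation*}
\Theta\colon \cl K\otimes A\longrightarrow \cl K\otimes\theta(A).
\end{equation*}

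Before using $\Theta$, one must check that $\nph_{\theta}$ is bounded and pointwise measurable. Boundedness is immediate: $\|\nph_{\theta}(x,y)\|_{\cb}=\|\nph(x,y)\|_{\cb}$, since $\theta$ is a complete isometry. For pointwise measurability, fix $a\in A$; the function $f_a(x,y)=\nph(x,y)(a)$ takes values in the separable C*-algebra $A$ and is weakly measurable. Using separability of $A$ and of $H$, $f_a$ is in fact Bochner (norm) measurable: for each $b\in A$, the map $(x,y)\mapsto \|f_a(x,y)-b\|$ is a countable supremum of measurable functions of the form $(x,y)\mapsto |\langle(f_a(x,y)-b)\xi_m,\xi_n\rangle|$ over a dense sequence $(\xi_m)$ in the unit ball of $H$. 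Composing with the norm continuous map $\theta$ yields a norm measurable, hence weakly measurable, $\cl B(K)$-valued function, which is precisely the pointwise measurability of $\nph_{\theta}$.

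Next, one verifies that $\Theta(T_k)=T_{\theta\circ k}$ for every $k\in L^2(Y\times X,A)$. By (\ref{eq_wta}) this holds on elementary tensors $k=w\otimes a$, and it passes to arbitrary $k$ by linearity and the Hilbert-Schmidt estimate of Lemma \ref{l_Tk}. The pointwise identity
\begin{equation*}
(\nph_{\theta}\cdot(\theta\circ k))(y,x)=\nph_{\theta}(x,y)(\theta(k(y,x)))=\theta(\nph(x,y)(k(y,x)))=\theta((\nph\cdot k)(y,x))
\end{equation*}
then yields the intertwining
\begin{equation*}
\Theta(S_{\nph}(T_k))=T_{\theta\circ(\nph\cdot k)}=T_{\nph_{\theta}\cdot(\theta\circ k)}=S_{\nph_{\theta}}(T_{\theta\circ k})=S_{\nph_{\theta}}(\Theta(T_k))
\end{equation*}
on the dense subspace $\cl S_2(Y\times X,A)\subseteq \cl K\otimes A$, which $\Theta$ maps bijectively onto $\cl S_2(Y\times X,\theta(A))\subseteq \cl K\otimes\theta(A)$.

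Since $\Theta$ is a complete isometry, complete boundedness of $S_{\nph}$ and of $S_{\nph_{\theta}}$ on their respective dense domains are equivalent, with equal cb norms; this gives both the equivalence and the norm equality asserted in the proposition. The only non-routine step is the pointwise measurability of $\nph_{\theta}$, which leans essentially on the separability of $A$; everything else is a direct computation from the definitions.
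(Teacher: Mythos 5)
Your proposal is correct and follows essentially the same route as the paper: both arguments rest on the complete isometry $\id_{\cl K}\otimes\theta:\cl K\otimes A\to\cl K\otimes\theta(A)$, the identity $(\id\otimes\theta)(T_k)=T_{\theta\circ k}$ (proved on elementary tensors and extended by density via Lemma \ref{l_Tk}), and the resulting intertwining $S_{\nph_{\theta}}=(\id\otimes\theta)\circ S_{\nph}\circ(\id\otimes\theta)^{-1}$. Your extra paragraph verifying pointwise measurability of $\nph_{\theta}$ via the Pettis-type argument is a welcome elaboration of a point the paper leaves as a parenthetical remark, but it does not change the substance of the argument.
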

\begin{proof}
Let $B = \theta(A)$.
Note that the map $\id\otimes\theta : \cl K\odot A \to \cl K \odot B$ given by
$(\id\otimes\theta)(T\otimes a) = T\otimes \theta(a)$ extends to a
complete isometry from $\cl K\otimes A$ onto $\cl K\otimes B$ \cite{blm}.
Let $k\in L^2(Y\times X,A)$; then the map $k_{\theta} = \theta\circ k$ belongs to $L^2(Y\times X,B)$.
We claim that
\begin{equation}\label{eq_otthe}
T_{k_{\theta}} = (\id\otimes\theta)(T_k).
\end{equation}
To see (\ref{eq_otthe}), note first that, by (\ref{eq_wta}), it holds when $k = k'\otimes a$ for some
$k'\in L^2(Y\times X)$ and $a\in A$; hence, by linearity, it holds if $k\in L^2(Y\times X)\odot A$.
By \cite[Proposition 7.4]{takesaki1},  there exists a sequence $(k_i)_{i\in \bb{N}}\subseteq L^2(Y\times X)\odot A$
such that $\|k_i - k\|_2\to_{i\to \infty} 0$. It follows that $\|\theta\circ k_i - \theta\circ k\|_2\to_{i\to \infty} 0$.
By Lemma \ref{l_Tk}, $T_{k_i}\to T_k$ and $T_{\theta\circ k_i}\to T_{\theta\circ k}$ in
the operator norm. Thus, $(\id\otimes\theta)(T_{k_i})\to (\id\otimes\theta)(T_k)$ in
the operator norm, and we conclude that $T_{k_{\theta}} = (\id\otimes\theta)(T_k)$.

By (\ref{eq_otthe}),
$$(\id\otimes\theta)(S_{\nph}(T_k)) = S_{\nph_{\theta}}(T_{k_{\theta}});$$
in other words,
$$S_{\nph_{\theta}} = (\id\otimes\theta)\circ S_{\nph}\circ (\id\otimes\theta^{-1}).$$
It follows that $S_{\nph}$ is completely bounded if and only if
$S_{\nph_{\theta}}$ is so and that, in this case,
$\|\nph\|_{\frak{S}} = \|\nph_\theta\|_{\frak{S}}$.
\end{proof}

Proposition \ref{p_indep} allows us to
view the elements of $\frak{S}_0(X,Y; A)$
independently of the particular faithful representation of $A$
on a separable Hilbert space; we will thus in the sequel
refer to $A$-valued Schur $A$-multipliers
without the need to specify a particular representation.

\begin{lemma}\label{l_modp}
Let $\nph\in  \frak{S}(X,Y; A)$. If $C\in \cl D_X$, $D\in \cl D_Y$ and $T\in \cl K\otimes A$ then
\begin{equation}\label{eq_modp}
S_{\nph}((D\otimes I_H)T(C\otimes I_H)) = (D\otimes I_H)S_{\nph}(T)(C\otimes I_H).
\end{equation}
\end{lemma}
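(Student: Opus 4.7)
The plan is to verify the identity first on the dense subspace $\cl S_2(Y\times X, A)$ of $\cl K\otimes A$ by a direct kernel computation, and then extend by norm continuity.

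First I would reduce to the case $T = T_k$ for some $k\in L^2(Y\times X, A)$ and $C = M_c$, $D = M_d$ with $c\in L^\infty(X)$, $d\in L^\infty(Y)$. A straightforward change-of-variables computation using (\ref{d_defk}) shows that
\[
(D\otimes I_H)\, T_k\, (C\otimes I_H) = T_{k'}, \quad \text{where } k'(y,x) = d(y)\, k(y,x)\, c(x).
\]
Since $k'$ still lies in $L^2(Y\times X, A)$ (with the obvious norm bound), this puts the operator on the left-hand side of (\ref{eq_modp}) in the domain of $S_\nph$.

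Next I would apply $S_\nph$ to $T_{k'}$. By definition, $S_\nph(T_{k'}) = T_{\nph\cdot k'}$, and because $\nph(x,y)$ is a linear map on $A$ while $d(y)$, $c(x)$ are scalars,
\[
(\nph\cdot k')(y,x) = \nph(x,y)\bigl(d(y)\,k(y,x)\,c(x)\bigr) = d(y)\,\nph(x,y)(k(y,x))\,c(x) = d(y)\,(\nph\cdot k)(y,x)\,c(x).
\]
Running the kernel-multiplication identity in reverse,
\[
T_{\nph\cdot k'} = (D\otimes I_H)\,T_{\nph\cdot k}\,(C\otimes I_H) = (D\otimes I_H)\, S_\nph(T_k)\, (C\otimes I_H),
\]
which establishes (\ref{eq_modp}) on $\cl S_2(Y\times X, A)$.

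Finally, I would extend to arbitrary $T\in \cl K\otimes A$ by density. Since $\cl S_2(Y\times X, A)$ is norm dense in $\cl K\otimes A$ (as noted before Definition \ref{d_schura}), and both sides of (\ref{eq_modp}) depend continuously on $T$ in the operator norm — the left side because $S_\nph$ extends to a bounded (in fact completely bounded) map on $\cl K\otimes A$ by hypothesis, and the right side because left and right multiplication by $(D\otimes I_H)$ and $(C\otimes I_H)$ are bounded — the identity passes to the limit. I do not anticipate any real obstacle: the only mild point to be careful about is justifying the kernel identity for $(D\otimes I_H)T_k(C\otimes I_H)$, which uses the identification $L^2(X,H)\equiv L^2(X)\otimes H$ together with the action of $\cl D_X$ by multiplication, but this follows immediately from (\ref{d_defk}) and Fubini.
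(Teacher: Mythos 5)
Your proof is correct and follows essentially the same route as the paper's: reduce to operators with explicit kernels, compute that both sides of (\ref{eq_modp}) are integral operators with the same kernel (using the linearity of each $\nph(x,y)$ to pull out the scalars $c(x)$, $d(y)$), and extend by norm density and boundedness. The only cosmetic difference is that the paper reduces further to elementary tensors $T_k\otimes a$ with $k\in L^2(Y\times X)$ scalar-valued, whereas you work directly with general $A$-valued kernels; both reductions are justified by the same density observation.
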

\begin{proof}
By continuity and linearity, it suffices to establish (\ref{eq_modp}) in the case
$T = T_k\otimes a$, where $k\in L^2(Y\times X)$ and $a\in A$.
Assuming that $C = M_c$ and $D = M_d$, where $c\in L^{\infty}(X)$ and
$d\in L^{\infty}(Y)$,  we have that both
$S_{\nph}((D\otimes I_H)T(C\otimes I_H))$ and $(D\otimes I_H)S_{\nph}(T)(C\otimes I_H)$
coincide with $T_{k'}$, where $k'$ is the (weakly measurable)
function from $Y\times X$ into $\cl B(H)$ given by
$$k'(y,x) =  c(x)d(y) k(x,y) \nph(x,y)(a).$$
\end{proof}

\begin{lemma}\label{l_repkte}
Let $\cl E$ and $\cl L$ be separable Hilbert spaces and
$\theta : \cl K(\cl E)\otimes A\to \cl B(\cl L)$ be a non-degenerate *-representation.
Then there exists a separable Hilbert space $K$, a unitary operator $U : \cl L \to \cl E\otimes K$
and a non-degenerate *-representation $\rho : A\to \cl B(K)$
such that
$$U \theta(b\otimes a) U^* = b\otimes \rho(a), \ \ \ \ b\in \cl K(\cl E), a\in A.$$
\end{lemma}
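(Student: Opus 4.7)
The plan is to reduce the statement to the classical classification of non-degenerate $*$-representations of the compact operators, by disentangling the two tensor factors of $\theta$ via multiplier algebra techniques.

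First I would extract a representation of $\cl K(\cl E)$ alone. Let $(e_\lambda)$ be an approximate unit of $A$ and define $\pi(b)\xi = \lim_\lambda \theta(b\otimes e_\lambda)\xi$ for $b\in \cl K(\cl E)$ and $\xi\in \cl L$. Non-degeneracy of $\theta$ together with the elementary tensor identities $\theta(b\otimes e_\lambda)\theta(b'\otimes a) = \theta(bb'\otimes e_\lambda a)$ guarantees that the limit exists in the strong operator topology and defines a non-degenerate $*$-representation $\pi : \cl K(\cl E) \to \cl B(\cl L)$. Since $\cl L$ is separable, the standard classification of non-degenerate $*$-representations of $\cl K(\cl E)$ (they are all spatially equivalent to inflations of the identity representation) supplies a separable Hilbert space $K$ and a unitary $U : \cl L \to \cl E\otimes K$ satisfying $U\pi(b)U^* = b\otimes I_K$ for all $b\in \cl K(\cl E)$.

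Second, I would handle the action of $A$. Let $(p_\mu)$ be an approximate unit of $\cl K(\cl E)$ and set $\rho_0(a)\xi = \lim_\mu \theta(p_\mu\otimes a)\xi$; as above this limit exists and defines a non-degenerate $*$-representation $\rho_0 : A\to \cl B(\cl L)$. Both $\pi$ and $\rho_0$ are the restrictions of the unique strictly continuous unital extension $\bar\theta$ of $\theta$ to the multiplier algebra $M(\cl K(\cl E)\otimes A)$, evaluated on the commuting subalgebras $\cl B(\cl E)\otimes 1$ and $1\otimes M(A)$, respectively. Consequently, $\pi(b)$ and $\rho_0(a)$ commute for every $b\in \cl K(\cl E)$ and $a\in A$, and $\pi(b)\rho_0(a) = \theta(b\otimes a)$.

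Third, I would identify $\rho_0$ via the commutant. Conjugating by $U$, the operator $U\rho_0(a)U^*$ lies in the commutant of $\{b\otimes I_K : b\in \cl K(\cl E)\}$ inside $\cl B(\cl E\otimes K)$, which is well-known to equal $\CC I_{\cl E}\bar\otimes\cl B(K) = I_{\cl E}\otimes \cl B(K)$. Hence there exists $\rho(a)\in \cl B(K)$ with $U\rho_0(a)U^* = I_{\cl E}\otimes \rho(a)$, and $\rho$ inherits from $\rho_0$ the structure of a non-degenerate $*$-representation of $A$ on $K$. Combining the two identifications,
$$U\theta(b\otimes a)U^* = U\pi(b)U^*\cdot U\rho_0(a)U^* = (b\otimes I_K)(I_{\cl E}\otimes \rho(a)) = b\otimes \rho(a),$$
as required. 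The main technical obstacle is the careful justification that $\theta$ extends strictly to the multiplier algebra so that $\pi(b)\rho_0(a) = \rho_0(a)\pi(b) = \theta(b\otimes a)$; once this is set up, the identification of the commutant and the recovery of $\rho$ are routine.
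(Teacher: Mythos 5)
Your proof is correct and follows essentially the same route as the paper: extend $\theta$ to the multiplier algebra (your approximate-unit limits $\pi$ and $\rho_0$ are exactly the restrictions of that extension to $\cl K(\cl E)\otimes 1$ and $1\otimes A$), apply the classification of non-degenerate representations of $\cl K(\cl E)$ to obtain $U$, and recover $\rho$ from the commutant $(\cl K(\cl E)\otimes I_K)' = I_{\cl E}\otimes\cl B(K)$. The only cosmetic difference is that the paper invokes the unital strict extension $\hat\theta$ directly (citing Pedersen) rather than constructing the two commuting representations by hand via approximate units.
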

\begin{proof}
Let $M(\cl K(\cl E)\otimes A)$ be the multiplier algebra of $\cl K(\cl E)\otimes A$.
There exists a unital *-homomorphism
$\hat{\theta} : M(\cl K(\cl E)\otimes A) \to \cl B(\cl L)$ extending $\theta$
(see {\it e.g.} \cite[Proposition 3.12.10]{ped2}).
The map $x\to \hat{\theta}(x\otimes I_H)$ is clearly a non-degenerate *-representation of
$\cl K(\cl E)$ on $\cl L$.
Thus, there exists a separable Hilbert space $K$ and a unitary operator
$U :  \cl L\to \cl E\otimes K$ such that
$$U \hat{\theta}(b\otimes I_H)U^* = b\otimes I_{K}, \ \ \ b\in \cl K(\cl E).$$
Let $\tilde{\theta} : M(\cl K(\cl E)\otimes A)\to \cl B(\cl E\otimes K)$ be given by
$$\tilde{\theta}(T) = U\hat{\theta}(T)U^*, \ \ \ T\in M(\cl K(\cl E)\otimes A).$$
For $a\in A$ and $b\in \cl K(\cl E)$, the operators
$\tilde{\theta}(b\otimes I_H)$ and $\tilde{\theta}(I_{\cl E}\otimes a)$ commute.
It follows that $\tilde{\theta}(I_{\cl E}\otimes a) = I_{\cl E}\otimes \rho(a)$, for some operator
$\rho(a)\in \cl B(K)$. Since $\tilde{\theta}$ is a unital *-homomorphism, the map
$\rho : A\to \cl B(K)$ is easily seen to be a non-degenerate *-homomorphism.
Moreover, if $b\in \cl K(\cl E)$ and $a\in A$ then
\begin{eqnarray*}
U \theta(b\otimes a) U^* & = & U \hat{\theta}(b\otimes I_H) \hat{\theta}(I_{\cl E}\otimes a) U^*
= U \hat{\theta}(b\otimes I_H)U^* U\hat{\theta}(I_{\cl E}\otimes a) U^*\\
& = &
\tilde{\theta}(b\otimes I_H) \tilde{\theta}(I_{\cl E}\otimes a) =
(b\otimes I_{K})(I_{\cl E}\otimes\rho(a)) = b\otimes\rho(a).
\end{eqnarray*}
\end{proof}

\begin{theorem}\label{th_chschura}
Let $\varphi:X\times Y\to CB(A,\Bd)$ be a bounded pointwise measurable function.
The following are equivalent:

(i) \ $\nph$ is a Schur $A$-multiplier;

(ii) there exist a separable Hilbert space $K$, a non-degenerate *-representa-tion
$\rho : A\to \cl B(K)$,
$V \in L^{\infty}(X,\cl B(H,K))$ and $W\in L^{\infty}(Y,\cl B(H,K))$
such that, for all $a\in A$,
$$\varphi(x,y)(a) = W^*(y)\rho(a)V(x),$$
for almost all $(x,y)\in X\times Y$.
\end{theorem}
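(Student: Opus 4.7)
The plan is to prove (ii) $\Rightarrow$ (i) by a direct computation and (i) $\Rightarrow$ (ii) via a Stinespring-type dilation made compatible with the bimodule structure provided by Lemma \ref{l_modp}. For (ii) $\Rightarrow$ (i), given $V, W, \rho$ as in (ii), define bounded operators $M_V : L^2(X,H)\to L^2(X,K)$ by $(M_V\xi)(x) = V(x)\xi(x)$ and $M_W : L^2(Y,H)\to L^2(Y,K)$ analogously; their norms do not exceed $\|V\|_\infty$ and $\|W\|_\infty$ via the identification $L^\infty(X,\mathcal{B}(H,K)) \equiv \mathcal{D}_X\bar\otimes\mathcal{B}(H,K)$ recalled before the section. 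A direct unwinding of (\ref{d_defk}) yields
\[
S_\varphi(T_k) \;=\; M_W^{\,*}\,(\mathrm{id}\otimes\rho)(T_k)\,M_V, \qquad k\in L^2(Y\times X, A),
\]
where $\mathrm{id}\otimes\rho : \mathcal{K}\otimes A \to \mathcal{B}(L^2(X,K), L^2(Y,K))$ is the completely contractive amplification, so $S_\varphi$ is completely bounded.

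The converse is the substantive direction. Embed $\mathcal{K}\otimes A$ as the $(2,1)$-corner of $\mathcal{C} := \mathcal{K}(L^2(X)\oplus L^2(Y))\otimes A$, and view the range of $S_\varphi$ inside $\mathcal{B}(L^2(X,H)\oplus L^2(Y,H))$ via the corresponding off-diagonal embedding. By the Wittstock--Haagerup--Paulsen representation theorem applied to $S_\varphi$ viewed as a CB map on the operator subspace $\mathcal{K}\otimes A \subseteq \mathcal{C}$, we obtain a non-degenerate $*$-representation $\pi:\mathcal{C}\to\mathcal{B}(\mathcal{L})$ on a separable Hilbert space $\mathcal{L}$ together with bounded operators $V_0:L^2(X,H)\to\mathcal{L}$ and $W_0:L^2(Y,H)\to\mathcal{L}$ with $S_\varphi(T) = W_0^{\,*}\pi(T)V_0$. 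Extending $\pi$ to the multiplier algebra $M(\mathcal{C})$, Lemma \ref{l_modp} combined with a bimodule version of the dilation (equivalently, a minimality/averaging argument on $V_0, W_0$) allows us to arrange the covariance relations
\[
V_0(C\otimes I_H) = \pi(C\otimes 1) V_0, \qquad W_0(D\otimes I_H) = \pi(D\otimes 1) W_0
\]
for $C\in\mathcal{D}_X$ and $D\in\mathcal{D}_Y$.

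Next, apply Lemma \ref{l_repkte} with $\cl E = L^2(X)\oplus L^2(Y)$ to $\pi$: there exist a separable Hilbert space $K$, a non-degenerate $*$-representation $\rho:A\to\mathcal{B}(K)$, and a unitary $U:\mathcal{L}\to \cl E\otimes K \cong L^2(X,K)\oplus L^2(Y,K)$ with $U\pi(b\otimes a)U^* = b\otimes\rho(a)$. The covariance transfers to $UV_0(C\otimes I_H) = (C\otimes I_K) UV_0$; since $C\in\mathcal{D}_X$ acts as zero on the $L^2(Y)$-summand of $\cl E$, non-degeneracy of $\mathcal{D}_X$ on $L^2(X,H)$ forces $UV_0$ to map into $L^2(X,K)$, and similarly $UW_0$ maps into $L^2(Y,K)$. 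The resulting $\mathcal{D}_Z$-module maps ($Z = X, Y$) correspond, via the identification $L^\infty(Z,\mathcal{B}(H,K))\equiv \mathcal{D}_Z\bar\otimes\mathcal{B}(H,K)$, to essentially bounded functions $V\in L^\infty(X,\mathcal{B}(H,K))$ and $W\in L^\infty(Y,\mathcal{B}(H,K))$ with $UV_0 = M_V$ and $UW_0 = M_W$.

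Finally, for $T_k$ with $k = w\otimes a$, $w\in L^2(Y\times X)$ and $a\in A$, the identity $S_\varphi(T_k) = W_0^{\,*}\pi(T_k)V_0 = M_W^{\,*}(\mathrm{id}\otimes\rho)(T_k)M_V$ combined with (\ref{d_defk}) shows that $S_\varphi(T_{w\otimes a})$ has integral kernel $(y,x)\mapsto w(y,x)\,W(y)^*\rho(a)V(x)$, whereas the defining formula gives the kernel $(y,x)\mapsto w(y,x)\,\varphi(x,y)(a)$. The uniqueness clause in Lemma \ref{l_Tk} then yields $\varphi(x,y)(a) = W(y)^*\rho(a)V(x)$ almost everywhere on $X\times Y$ for every $a\in A$ (with the exceptional null set depending on $a$; separability of $A$ lets one choose a common null set). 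The main obstacle is the covariance step in the second paragraph: producing a Stinespring dilation of $S_\varphi$ compatible with its $\mathcal{D}_X$- and $\mathcal{D}_Y$-bimodule structure, since this is precisely what permits the subsequent identification of $V_0, W_0$ as operator-valued multiplication maps.
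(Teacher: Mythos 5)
Your overall architecture coincides with the paper's: the off-diagonal embedding into $\cl K(L^2(Y)\oplus L^2(X))\otimes A$, the Haagerup--Paulsen--Wittstock dilation, Lemma \ref{l_repkte} to factor the representation as $b\otimes\rho(a)$, the identification of the resulting module maps with elements of $L^{\infty}(Z,\cl B(H,K))$, and the final kernel comparison. The direction (ii)$\Rightarrow$(i) is also handled as in the paper, up to one routine point you gloss over: since $\varphi(x,y)(a)=W(y)^*\rho(a)V(x)$ holds only off an $a$-dependent null set, the identity $S_\varphi(T_k)=M_W^*(\id\otimes\rho)(T_k)M_V$ for general $k\in L^2(Y\times X,A)$ requires approximating $k$ by elements of $L^2(Y\times X)\odot A$ and passing to the limit using Lemma \ref{l_Tk}.

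The genuine gap is the step you yourself label ``the main obstacle'': you assert, but do not prove, that the dilation can be arranged so that $V_0$ and $W_0$ satisfy the covariance relations with respect to $\cl D_X$ and $\cl D_Y$. This is the entire mathematical content of (i)$\Rightarrow$(ii); everything downstream of it is bookkeeping. The raw $V_0,W_0$ furnished by the Wittstock theorem need not intertwine anything (the representation is highly non-unique), so ``arrange'' must mean ``replace by explicit modifications and verify that the formula for $S_\varphi$ survives''. The paper's mechanism is concrete: after Lemma \ref{l_repkte} one takes the projections $I\otimes E$ and $I\otimes F$, with $E,F\in\rho(A)'$, onto the closed spans of $(\cl K(L^2(X))\otimes\rho(A))\tilde V L^2(X,H)$ and $(\cl K(L^2(Y))\otimes\rho(A))W L^2(Y,H)$ respectively, replaces $\tilde W$ by $W=(I\otimes E)\tilde W$ and $\tilde V$ by $V=(I\otimes F)\tilde V$, checks that $S_\varphi(T)=W^*\tilde\rho(T)V$ still holds, and only then deduces $(M_d\otimes I_K)W=W(M_d\otimes I_H)$ from Lemma \ref{l_modp} combined with the fact that the vectors $\tilde\rho(T)\tilde V\xi$ span a dense subspace of the range of $I\otimes E$. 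Some such minimality argument (or an averaging over the amenable unitary group of the abelian algebra $\cl D_Y\oplus\cl D_X$, which would be a legitimate alternative) must be written out; as it stands, your proof of (i)$\Rightarrow$(ii) is incomplete precisely at its load-bearing point.
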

\begin{proof}
(i)$\Rightarrow$(ii)
Suppose that $\nph\in \frak{S}(X,Y;A)$.
Let $\cl E = L^2(Y)\oplus L^2(X)$ and
$\Phi : \cl K(\cl E)\otimes A\to \cl K(\cl E)\otimes\cl B(H)$ be given by
\begin{equation}\label{eq_tbt}
\Phi\left(\left(\begin{matrix} x_{1,1} & x_{1,2}\\ x_{2,1} & x_{2,2}\end{matrix}\right)\otimes a\right) =
\left(\begin{matrix} 0 & S_{\nph}(x_{1,2}\otimes a)\\ 0 & 0\end{matrix}\right).
\end{equation}
It is clear that $\Phi$ is a completely bounded map with
$\|\Phi\|_{\cb} = \|S_{\nph}\|_{\cb}$. By the Haagerup-Paulsen-Wittstock Theorem, there exist a Hilbert space $\cl L$,
a non-degenerate *-homomorphism
$\theta : \cl K(\cl E)\otimes A\to \cl B(\cl L)$ and
operators $V_0,W_0\in \cl B(\cl E\otimes H,\cl L)$ such that
$$\Phi(T) = W_0^*\theta(T)V_0, \ \ \ T\in \cl K(\cl E)\otimes A.$$
As $\cl K(\cl E)\otimes A$ is separable, we may assume that $\cl L$ is separable.
By Lemma \ref{l_repkte}, there exist
a separable Hilbert space $K$, a unitary operator $U : \cl L \to \cl E\otimes K$
and a *-representation $\rho : A\to \cl B(K)$ such that
$$U\theta(b\otimes a) U^* = b\otimes \rho(a), \ \ \ \ b\in \cl K(\cl E), a\in A.$$
Let $\hat{W} = UW_0$ and $\hat{V} = UV_0$.
Then
$$\Phi(b\otimes a) = \hat{W}^*(b\otimes \rho(a)) \hat{V}, \ \ \ b\in \cl K(\cl E), a\in A.$$
Writing $\hat{V}$ and $\hat{W}$ in two by two matrix form and recalling (\ref{eq_tbt}),
we conclude that there exist bounded operators
$\tilde{V} : L^2(X,H)\to L^2(X,K)$ and
$\tilde{W} : L^2(Y,H)\to L^2(Y,K)$
such that
\begin{equation}\label{eq_tildevw}
S_{\nph}(b\otimes a) = \tilde{W}^*(b\otimes \rho(a)) \tilde{V}, \ \ \ b\in \cl K, a\in A.
\end{equation}

Let
$$\cl S \stackrel{\text{def}}{=}
\overline{[\{T\tilde{V}L^2(X,H) : T\in \cl K(L^2(X))\otimes \rho(A)\}]}.$$
Clearly, $\cl S$  is invariant under $\cl K(L^2(X))\otimes \rho(A)$.
Thus, the projection onto $\cl S$ has the form $I_{L^2(X)}\otimes E$, for
some projection $E\in \rho(A)'$.
Moreover,
\begin{equation}\label{eq_l2e}
\tilde{V} = (I_{L^2(X)}\otimes E)\tilde{V}.
\end{equation}
Setting $\tilde{\rho} = \id\otimes\rho$ (so that $\tilde{\rho}$ is a map
from $\cl K\otimes A$ into $\cl K\otimes \cl B(K)$),
by (\ref{eq_tildevw}) and (\ref{eq_l2e}), we now have
\begin{equation}\label{eq_snphwv}
S_{\varphi}(T) = \tilde{W}^*\tilde{\rho}(T)(I_{L^2(X)}\otimes E)\tilde{V}, \ \ \
T\in \cl K\otimes A.
\end{equation}
Note, further, that if $c\in L^{\infty}(X)$ and $d\in L^{\infty}(Y)$ then
\begin{equation}\label{eq_modrhot}
\tilde{\rho}((M_d^*\otimes I_H)T(M_c\otimes I_H)) = (M_d^*\otimes I_K)\tilde{\rho}(T)(M_c\otimes I_K).
\end{equation}

Let $W = (I_{L^2(Y)}\otimes E)\tilde{W}$.
Since
$$\tilde{\rho}(T)(I_{L^2(X)}\otimes E) = (I_{L^2(Y)}\otimes E)\tilde{\rho}(T),$$
we conclude from (\ref{eq_snphwv}) that
\begin{equation}\label{eq_wvti}
S_{\varphi}(T) = \tilde{W}^*(I_{L^2(Y)}\otimes E)\tilde{\rho}(T)\tilde{V}=W^*\tilde{\rho}(T)\tilde{V},
\end{equation}
for every $T\in \cl K\otimes A$.

Identities (\ref{eq_modrhot}) and (\ref{eq_wvti}) and Lemma \ref{l_modp} imply that
\begin{equation}\label{eqmodularity2}
W^*(M_d^*\otimes I_K)\tilde{\rho}(T)\tilde V=(M_d^*\otimes I_H)W^*\tilde{\rho}(T)\tilde V,
\ \ \ T\in \cl K\otimes A.
\end{equation}
Thus,
\[\ip{\tilde\rho(T)\tilde V\xi}{(M_d\otimes I_K) W\eta}
= \ip{\tilde\rho(T)\tilde V\xi}{W(M_d\otimes I_H)\eta}\!,\]
for all $\xi \in L^2(X,H)$ and all $\eta\in L^2(Y,H)$.
We conclude that
$$(I_{L^2(Y)}\otimes E)(M_d\otimes I_K) W=(I_{L^2(Y)}\otimes E)W(M_d\otimes I_H)$$
and hence
$(M_d\otimes I_K)W = W(M_d\otimes I_H)$ for all $d\in L^{\infty}(Y)$.
It follows easily that $W\in L^{\infty}(Y,\cl B(H,K))$ (see \cite[Theorem 7.10]{takesaki1}).
Let now
$$\cl T\stackrel{\text{def}}{=}
\overline{[\{TWL^2(Y,H) : T\in \cl K(L^2(Y))\otimes \rho(A)\}]}.$$
The projection onto $\cl T$
has the form  $I_{L^2(Y)}\otimes F$ for
some projection $F\in \rho(A)'$.
Letting
$V=(I_{L^2(X)}\otimes F)\tilde V$, and using similar arguments to the ones above, one shows that
$(M_c\otimes I_K)V=V(M_c\otimes I_H)$ for all $c\in L^\infty(X)$ and hence
that $V\in L^{\infty}(X,\cl B(H,K))$.
Note that $W = (I_{L^2(Y)}\otimes F)W$ and hence, by (\ref{eq_wvti}),
\begin{equation}\label{eq_vw}
S_{\nph}(T) =
W^*(I_{L^2(Y)}\otimes F)\tilde{\rho}(T) \tilde{V} = W^* \tilde{\rho}(T)(I_{L^2(Y)}\otimes F) \tilde{V}
= W^*\tilde{\rho}(T) V,
\end{equation}
for every $T\in\cl K\otimes A$.

Let $k\in L^2(Y\times X)$ and $a\in A$. For $\xi\in L^2(X,H)$ and
$\eta\in L^2(Y,H)$ we have
\begin{equation}\label{eqresult1}
\ip{S_{\varphi}(T_k\otimes a)\xi}{\eta} = \int_Y\int_X k(y,x)\langle \varphi(x,y)(a)\xi(x),\eta(y)\rangle d\mu(x)d\nu(y).
\end{equation}
On the other hand, by (\ref{eq_vw}),
\begin{eqnarray*}
\langle S_{\nph}(T_k\otimes a)\xi,\eta \rangle
& = &
\langle W^*(T_k\otimes\rho(a))V\xi,\eta\rangle\\
& = &
\langle (T_k\otimes\rho(a))V\xi,W\eta\rangle\\
& = &
\int_Y\int_X k(y,x) \langle \rho(a)(V(x)\xi(x)),W(y)\eta(y)\rangle d\mu(x) d\nu(y)\\
& = &
\int_Y\int_X k(y,x) \langle W(y)^*\rho(a)V(x)\xi(x),\eta(y)\rangle d\mu(x) d\nu(y).
\end{eqnarray*}
Comparing the last identity with (\ref{eqresult1}) and taking into account that these
identities hold for all $k\in L^2(Y\times X)$, we conclude that
\begin{equation}\label{eq_inpi}
\langle \varphi(x,y)(a)\xi(x),\eta(y)\rangle = \langle W(y)^*\rho(a)V(x)\xi(x),\eta(y)\rangle \ \
\mbox{almost everywhere},
\end{equation}
for all $\xi\in L^2(X,H)$ and all $\eta\in L^2(Y,H)$.
If the measures $\mu$ and $\nu$ are finite, take $\xi = \chi_X\otimes \xi_0$ and
$\eta = \chi_Y \otimes \eta_0$, where $\xi_0,\eta_0 \in H$.
The separability of $H$ and (\ref{eq_inpi}) imply that
$$\varphi(x,y)(a) = W(y)^*\rho(a)V(x), \ \mbox{ for almost all } (x,y)\in X\times Y.$$
If the measures $\mu$ and $\nu$ are not finite, the proof is completed by choosing
increasing sequences $(X_n)_{n\in \bb{N}}$ and $(Y_n)_{n\in \bb{N}}$, each of whose terms
has finite measure, and letting
$\xi = \chi_{X_n}\otimes \xi_0$ and
$\eta = \chi_{Y_n} \otimes \eta_0$, with $\xi_0,\eta_0 \in H$.

(ii)$\Rightarrow$(i)
The assumption shows that the mapping
$S_{\nph} : \cl S_2(Y\times X,A)\to \cl S_2(Y\times X,\cl B(H))$
satisfies
$$S_{\nph}(T_h\otimes a) = W^*(T_h\otimes \rho(a))V, \ \ h\in L^2(Y\times X), a\in A.$$
By linearity,
\begin{equation}\label{eq_simple}
S_{\nph}(T_k) = W^* T_{\rho\circ k}V,
\end{equation}
whenever $k\in L^2(Y\times X)\odot A$.

Let $k\in L^2(Y\times X,A)$ be arbitrary. By \cite[Proposition 7.4]{takesaki1}, there exists a sequence
$(k_i)_{i\in \bb{N}}\subseteq L^2(Y\times X)\odot A$
with $\|k_i - k\|_2\to_{i\to \infty} 0$.
Using (\ref{eq_otthe}), (\ref{eq_simple}), Lemma \ref{l_Tk} and the fact that $\nph$ is bounded, we obtain
\begin{eqnarray*}
S_{\nph}(T_k)
& = &
\lim_{i\to\infty} S_{\nph}(T_{k_i}) = \lim_{i\to\infty} W^* T_{\rho\circ k_i} V\\
& = &
W^* (\lim_{i\to\infty} T_{\rho\circ k_i})V =
W^* (\lim_{i\to\infty} \tilde{\rho}(T_{k_i})V =
W^*\tilde{\rho}(T_k)V.
\end{eqnarray*}
Thus, $S_{\nph}$ has a completely bounded extension to $\cl K\otimes \cl A$
(namely, the map $T\to W^*\tilde{\rho}(T))V$)
and hence $\nph$ is a Schur $A$-multiplier.
\end{proof}

\noindent {\bf Remarks (i) } The proof of Theorem \ref{th_chschura} shows that
if $\nph\in \frak{S}(X,Y;A)$ then the operator valued functions $V$ and $W$
can be chosen so that
$$\|\nph\|_{\frak{S}} = \esssup_{x\in X} \|W(x)\|\esssup_{y\in Y} \|V(y)\|.$$

\smallskip

{\bf (ii) } In the case $A = \bb{C}$, Theorem \ref{th_chschura} reduces to the
well-known characterisation of measurable Schur multipliers due to
U. Haagerup \cite{haag} and V. V. Peller \cite{peller} (see also \cite{kp}).
Indeed, in this case, $\rho$ is equal to the identity representation of $\bb{C}$
and hence $\nph$ has a representation of the form
\begin{equation}\label{eq_wvh}
\nph(x,y) = \langle w(y),v(x)\rangle,
\end{equation}
where $v : X\to K$ and $w : Y\to K$ are weakly measurable essentially bounded
functions, for some separable Hilbert space $K$.

\medskip

If, in Theorem \ref{th_chschura}, the operator valued functions $V$ and $W$
can be chosen to be weakly measurable, then we will say that the Schur $A$-multiplier
$\nph$ has a \emph{weakly measurable representation}.
In the next theorem
we exhibit a class of $A$-valued Schur $A$-multipliers possessing
a weakly measurable representation which exhausts all
such multipliers in the case $A$ is finite dimensional.
Recall that a Hilbert $A$-bimodule is a right Hilbert $A$-module $\cl N$,
equipped with a left $A$-module action given by
$a\cdot \xi \stackrel{def}{=} \theta(a)(\xi)$, $a\in A$, $\xi\in \cl N$,
for some *-representation $\theta$ of $A$ into the C*-algebra of all
adjointable operators on $\cl N$. As is customary in the literature on Hilbert modules,
we assume linearity on the second variable of the $A$-valued inner product,
denoted here by $\vip{\cdot}{\cdot}_A$.

\begin{theorem}\label{c_hilmodc}
Let $A\subseteq\Bd$ be a separable C*-algebra and $\varphi : X\times Y\to CB(A)$
be a bounded pointwise measurable function. Consider the conditions:

(i) there exists a countably generated Hilbert $A$-bimodule $\mathcal{N}$ and bounded
weakly measurable functions
$v : X\to\mathcal{N}$ and $w : Y\to\mathcal{N}$ such that
\begin{equation}\label{eq_spo}
\varphi(x,y)(a) = \vip{w(y)}{a\cdot v(x)}_A,\ \ \mbox{for almost all } (x,y)\in X\times Y
\end{equation}
for every $a\in A$.

(ii) $\varphi$ is a Schur $A$-multiplier possessing a weakly measurable representation.

\noindent
Then $(i)\Rightarrow (ii)$. If $A$ is finite-dimensional then $(i)\Leftrightarrow (ii)$.
\end{theorem}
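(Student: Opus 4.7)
The plan is to prove the two implications separately; the forward direction is a straightforward KSGNS-type construction, while the converse under finite-dimensionality is obtained by building a bimodule out of the given operator-valued data via a conditional expectation.

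For $(i) \Rightarrow (ii)$, given the Hilbert $A$-bimodule $\cl{N}$ whose left action is encoded by a $*$-representation $\theta : A \to \cl{L}_A(\cl{N})$ into adjointable operators, form the interior tensor product $K := \cl{N} \otimes_A H$ with inner product $\langle \xi \otimes h, \xi' \otimes h'\rangle_K = \langle h, \vip{\xi}{\xi'}_A h'\rangle_H$; this is a separable Hilbert space because $\cl{N}$ is countably generated and $A$, $H$ are separable. The left action passes to a non-degenerate $*$-representation $\rho : A \to \Bdd{K}{K}$, $\rho(a)(\xi \otimes h) := (a\cdot\xi) \otimes h$. For $\xi \in \cl{N}$, put $L_\xi(h) := \xi \otimes h$; then $L_\xi \in \Bdd{H}{K}$, $\|L_\xi\| = \|\xi\|_{\cl{N}}$, and a direct computation gives $L_\eta^* \rho(a) L_\xi = \vip{\eta}{a\cdot\xi}_A$. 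Setting $V(x) := L_{v(x)}$ and $W(y) := L_{w(y)}$ yields essentially bounded operator-valued functions satisfying $\varphi(x,y)(a) = W(y)^* \rho(a) V(x)$; since for each fixed $h \in H$ and $\zeta \in K$ the map $\xi \mapsto \langle L_\xi h, \zeta\rangle_K$ is a continuous linear functional on $\cl{N}$, weak measurability of $V$ and $W$ is inherited from that of $v$ and $w$. Theorem \ref{th_chschura} then concludes that $\varphi$ is a Schur $A$-multiplier with a weakly measurable representation.

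For the converse $(ii) \Rightarrow (i)$ in the finite-dimensional case, the key ingredient is a normal conditional expectation $E : \Bdd{H}{H} \to A$. Writing $A = \bigoplus_i M_{n_i}$ together with the isotypic decomposition $H = \bigoplus_i (\mathbb{C}^{n_i} \otimes \cl{K}_i)$, so that $A = \bigoplus_i M_{n_i} \otimes I_{\cl{K}_i}$, one can define $E$ to act as $\id_{M_{n_i}} \otimes \omega_i$ on the $i$-th diagonal block for a normal state $\omega_i$ on $\Bdd{\cl{K}_i}{\cl{K}_i}$, and as zero on off-diagonal blocks. On $\Bdd{H}{K}$ introduce the right $A$-action $T \cdot a := Ta$, the left $A$-action $a \cdot T := \rho(a) T$, and the $A$-valued form $\vip{T_1}{T_2}_A := E(T_1^* T_2)$; the null-space $\{T : E(T^*T) = 0\}$ is stable under both actions (using $T^* \rho(a^*a) T \leq \|a\|^2 T^*T$ for the left action together with bimodularity of $E$ for the right), and after quotienting and completing one obtains a Hilbert $A$-bimodule, the left action being automatically adjointable by $\vip{\rho(a) T_1}{T_2}_A = E(T_1^* \rho(a^*) T_2) = \vip{T_1}{\rho(a^*) T_2}_A$. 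Choose a countably generated closed sub-bimodule $\cl{N}$ containing the classes $[V(x)]$ and $[W(y)]$ for almost every $(x,y)$, and set $v(x) := [V(x)]$, $w(y) := [W(y)]$; the desired identity is then
\[
\vip{w(y)}{a \cdot v(x)}_A = E(W(y)^* \rho(a) V(x)) = E(\varphi(x,y)(a)) = \varphi(x,y)(a),
\]
where the last equality uses that $\varphi(x,y)(a) \in A$ and $E$ restricts to the identity on $A$.

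The principal obstacle is in the converse direction: finite-dimensionality of $A$ is essential for producing the normal conditional expectation $E$. Without normality, functionals of the form $f \circ E$ for $f \in A^*$ need not be weakly continuous on $\Bdd{H}{H}$, and weak measurability of $v, w$ as $\cl{N}$-valued maps would fail; normality allows $f \circ E$ to be expressed via a trace-class element, reducing measurability to that of the matrix coefficients of $V, W$. A secondary technical point is arranging that the chosen sub-bimodule $\cl{N}$ is genuinely countably generated, which reduces to essential separability of the ranges of $V$ and $W$; this in turn follows from the Pettis measurability theorem together with separability of $X$, $Y$, $H$, and $K$.
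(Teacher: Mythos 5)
Your proof is correct and its core strategy coincides with the paper's: for $(i)\Rightarrow(ii)$ you realize the bimodule on a Hilbert space and invoke Theorem \ref{th_chschura}, and for the converse you turn $\cl B(H,K)$ into a Hilbert $A$-bimodule via a conditional expectation onto $A$ and take $v=V$, $w=W$. The differences are in execution. For $(i)\Rightarrow(ii)$ you build the interior tensor product $\cl N\otimes_A H$ explicitly, whereas the paper simply cites the linking-algebra representation of \cite{EKQR}; these are the same construction (one cosmetic point: to quote Theorem \ref{th_chschura}(ii) verbatim you should cut $\rho$ down to its essential subspace, or note that non-degeneracy is not used in the direction (ii)$\Rightarrow$(i) of that theorem). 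For the converse, the paper first applies Proposition \ref{p_indep} to replace the given representation by the multiplicity-free one $A=\oplus_k M_{n_k}\subseteq\cl B(\oplus_k\bb C^{n_k})$; there the block-diagonal compression $\Psi(T)=\sum_k P_kTP_k$ is a \emph{faithful} conditional expectation, so $\cl N=\cl B(H,K)$ is already a Hilbert $A$-bimodule with no quotient or completion, and it is countably generated simply because $H$ is finite-dimensional and $K$ separable. Your version keeps the general representation with multiplicity spaces and pays for it with the quotient-and-complete step and the separability/measurability discussion. That discussion is where your justification is slightly off: the operator-norm range of a weakly measurable $V\in L^\infty(X,\cl B(H,K))$ need not be essentially separable, so Pettis applied to $V$ itself does not give what you want. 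What saves you is that the \emph{entire} completed module is separable in the module norm: writing each normal state as $\omega_i=\sum_j\lambda_{ij}\omega_{\zeta_{ij}}$, the map $T\mapsto(\sqrt{\lambda_{ij}}\,TR_{ij})_{i,j}$ (with $R_{ij}e=e\otimes\zeta_{ij}$) isometrically embeds the quotient into a countable direct sum of the separable modules $\cl B(\bb C^{n_i},K)$, so no sub-bimodule needs to be selected. Normalizing the representation first, as the paper does, is the cleaner route precisely because it makes all of these issues evaporate.
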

\begin{proof}
(i)$\Rightarrow$(ii)
It follows for instance from  \cite[Example 2.8]{EKQR} that
there exist a separable
Hilbert space $K$, an isometry $\tau : \cl N\to \cl B(K)$
and a faithful *-representation $\pi : A\to \cl B(K)$ such that
$\tau(a\cdot z) = \pi(a)\tau(z)$, $\tau(z\cdot b) = \tau(z)\pi(b)$ and
$\pi(\langle z_1,z_2\rangle_A) = \tau(z_1)^*\tau(z_2)$, $z,z_1,z_2\in \cl N$, $a,b\in A$.
For all $a\in A$, we have that
$$\pi(\nph(x,y)(a)) =
\pi(\vip{w(x)}{a\cdot v(y)}_A) = \tau(w(x))^*\pi(a)\tau(v(y))  \ \mbox{ a.e.}$$
Moreover, the maps $\tau\circ v$ and $\tau\circ w$ are weakly measurable.
By Theorem \ref{th_chschura}, the map
$\nph_{\pi} : X\times Y\to CB(\pi(A))$, given by
$\nph_{\pi}(x,y)(\pi(a)) = \pi(\nph(x,y)(a))$, is a Schur $\pi(A)$-multiplier.
By Proposition \ref{p_indep}, $\nph$ is a Schur $A$-multiplier.

Assume now that $A$ is finite dimensional.
By Proposition \ref{p_indep}, we may identify $A$ with
the C*-algebra $\oplus_{k=1}^m M_{n_k}\subseteq \cl B(H)$,
where $M_n$ denotes, as customary, the $n$ by $n$ matrix algebra and $H= \oplus_{k=1}^m \mathbb C^{n_k}$, $n_k\in\mathbb N$, $k=1,\ldots,m$.

 Suppose that $\varphi$ is a Schur $A$-multiplier,
 $K$ is a separable Hilbert space,
$V : X\to \cl B(H,K)$, $W : Y\to \cl B(H,K)$
weakly measurable functions, and $\rho : A \to \cl B(K)$ a non-degenerate *-representation,
such that, for every $a\in A$, we have
$\varphi(x,y)(a) = W(y)^*\rho(a)V(x)$ for almost all $(x,y)\in X\times Y$.
The space $\cl B(H,K)$ is an operator $A$-bimodule
with respect to the actions
$a\cdot T \stackrel{\text{def}}{=}\rho(a)T$ and
$T\cdot a \stackrel{\text{def}}{=} Ta$, $a\in A,\ T\in\cl B(H,K)$.
Let $P_{k}$ be the projection in $\cl B(H)$ onto the
summand $\mathbb C^{n_k}$ and $\Psi(T)=\sum_{k=1}^mP_kTP_k$, $T\in \cl B(H)$.
Clearly, $\Psi$ is a completely positive projection from $\cl B(H)$ onto $A$.
We equip $\cl B(H,K)$ with the $A$-valued inner product given by
$\langle S,T\rangle_{\!A} = \Psi(S^*T)$.
As the projections $P_k$, $k = 1,\dots,m$, are mutually orthogonal
and $\sum_{k=1}^m P_k=I$, we have $\langle S,S\rangle_{\!A}=0$ if and only if $S=0$.
Moreover,
$$\langle S, T\cdot  a\rangle_{\!A} = \Psi(S^*Ta)=\Psi(S^*T)a =
\langle S, T\rangle_{\!A} a,$$
and hence $\cl N\stackrel{\text{def}}{=} \cl B(H,K)$ is a right Hilbert $A$-module.
In addition,
$$\langle a\cdot S, T\rangle_{\!A} = \Psi(S^*\rho(a)^*T) = \langle S, a^*\cdot T\rangle_{\!A},$$
showing that the map $\theta_a : S \to a\cdot S$ is adjointable and that
the map $a\to \theta_a$ is a *-representation;
thus, $\cl N$ is a Hilbert $A$-bimodule and $\varphi(x,y)(a)=\langle W(x), a\cdot V(x)\rangle_{\!A}$ for almost all $(x,y)\in X\times Y$.
As $H$ is finite dimensional and $K$ is separable, $\cl N$ is countably generated.
\end{proof}

Let $\cl B = \cl B(L^2(X),L^2(Y))$ for brevity.

\begin{proposition}\label{p_ext}
If $\nph\in \frak{S}(X,Y;A)$ then the map $S_{\nph}$ has a unique extension to a completely bounded
weak* continuous map from $\cl B\bar{\otimes} A^{**}$ into $\cl B\bar\otimes \cl B(H)$.
\end{proposition}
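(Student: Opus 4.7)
The plan is to obtain the extension directly from the Stinespring-type factorisation produced in Theorem \ref{th_chschura}, by replacing the $C^*$-representation $\rho$ of $A$ by its unique normal extension to $A^{**}$, and then to argue uniqueness from weak* density of $\cl K\otimes A$ in $\cl B\bar\otimes A^{**}$.

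Concretely, I would start by fixing, via Theorem \ref{th_chschura} and identity (\ref{eq_vw}), a separable Hilbert space $K$, a non-degenerate *-representation $\rho:A\to\cl B(K)$ and operators $V\in L^{\infty}(X,\cl B(H,K))$, $W\in L^{\infty}(Y,\cl B(H,K))$ (viewed as multiplication operators, so $V$ sits in $\cl D_X\bar\otimes\cl B(H,K)$ and $W$ in $\cl D_Y\bar\otimes\cl B(H,K)$) such that $S_{\nph}(T)=W^*\tilde\rho(T)V$ for $T\in \cl K\otimes A$, where $\tilde\rho=\id\otimes\rho$. Since $\rho$ is non-degenerate, it extends uniquely to a normal unital *-homomorphism $\bar\rho:A^{**}\to\cl B(K)$; consequently $\id_{\cl B}\otimes\bar\rho$ is a weak*-continuous completely contractive map from $\cl B\bar\otimes A^{**}$ into $\cl B\bar\otimes\cl B(K)\cong \cl B(L^2(X,K),L^2(Y,K))$. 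I then define
\[\tilde S_{\nph}:\cl B\bar\otimes A^{**}\to \cl B\bar\otimes\cl B(H),\qquad \tilde S_{\nph}(T)=W^*(\id\otimes\bar\rho)(T)V,\]
where the right-hand side is interpreted as a composition $L^2(X,H)\xrightarrow{V}L^2(X,K)\xrightarrow{(\id\otimes\bar\rho)(T)}L^2(Y,K)\xrightarrow{W^*}L^2(Y,H)$.

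The map $\tilde S_{\nph}$ is completely bounded (with $\cb$ norm at most $\|W\|\,\|V\|$), and it is weak* continuous because $\id\otimes\bar\rho$ is normal and left/right multiplication by the fixed operators $W^*$ and $V$ are weak* continuous on the relevant spaces of operators. That $\tilde S_{\nph}$ extends $S_{\nph}$ is immediate from $\bar\rho|_A=\rho$ together with (\ref{eq_vw}).

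For uniqueness, the point to verify is that $\cl K\otimes A$ is weak* dense in $\cl B\bar\otimes A^{**}$. This I would do in two stages: first, $A$ is weak* dense in $A^{**}$ by Goldstine, and $\cl K$ is weak* dense in $\cl B$ (Kaplansky density, applied to $\cl K$ as a norm-closed subspace whose bounded ball is weak* dense in the unit ball of $\cl B$); second, separate weak* continuity of the tensor product embedding, together with a bounded approximation argument of Kaplansky type, yields weak* density of $\cl K\odot A$ in $\cl B\bar\otimes A^{**}$. Any other weak* continuous extension of $S_{\nph}$ must therefore agree with $\tilde S_{\nph}$. The only step that requires a bit of care is this density claim — making the passage from separate weak* continuity of the tensor product to joint approximation by elements of $\cl K\odot A$ — but this is standard once one restricts to bounded subsets.
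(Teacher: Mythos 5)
Your proof is correct, but it takes a genuinely different route from the paper's. The paper argues representation-free: it forms the $2\times 2$ matrix map $\Phi$ on $\tilde{\cl K}\otimes A$ (with $\tilde{\cl K}=\cl K(L^2(Y)\oplus L^2(X))$), passes to the bidual $\Phi^{**}$, invokes the canonical normal identification $(\tilde{\cl K}\otimes A)^{**}\cong \tilde{\cl B}\bar\otimes A^{**}$ due to Huruya, restricts to the off-diagonal corner to get a weak* continuous map into $\cl B\bar\otimes\cl B(H)^{**}$, and finally compresses the range by $\id\otimes P$, where $P:\cl B(H)^{**}\to\cl B(H)$ is the canonical normal projection. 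You instead import the Stinespring-type factorisation $S_{\nph}=W^*\tilde\rho(\cdot)V$ from Theorem \ref{th_chschura} and replace $\rho$ by its unique normal extension $\bar\rho:A^{**}\to\rho(A)''$. Both arguments settle uniqueness by the same weak* density of $\cl K\otimes A$ in $\cl B\bar\otimes A^{**}$ (your two-stage bounded-net argument is fine, though calling the density of the unit ball of $\cl K$ in that of $\cl B$ ``Kaplansky'' is a slight misnomer, since $\cl K$ here is a corner rather than a C*-subalgebra; the fact itself is elementary via finite-rank cut-downs). What each approach buys: yours yields the explicit formula $T\mapsto W^*(\id\otimes\bar\rho)(T)V$ for the extension, which the paper only records as a remark immediately after the proposition; the paper's bidual argument, on the other hand, uses nothing beyond the complete boundedness of $S_{\nph}$ and so is independent of the structure theorem, which is a cleaner logical dependency. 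Do make sure you justify that $\id\otimes\bar\rho$ is well defined and normal on the weak* tensor product with the corner $\cl B$ (e.g.\ by restricting the normal homomorphism $\id_{\cl B(\cl E)}\otimes\bar\rho$ on $\cl B(\cl E)\bar\otimes A^{**}$, $\cl E=L^2(Y)\oplus L^2(X)$); this is standard but is the one point where your sketch leans on an unstated fact.
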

\begin{proof}
Let $P : \cl B(H)^{**}\to \cl B(H)$ be the canonical projection (that is, the adjoint of the
inclusion map of the trace class on $H$ into $\cl B(H)^*$). Then the map
$\id \otimes P : \cl B\bar\otimes \cl B(H)^{**}\to \cl B\bar\otimes \cl B(H)$ is weak* continuous and completely contractive
(see {\it e.g. } \cite{blm} and \cite[Proposition 7.1.6]{er}).

Let $\tilde{\cl K} = \cl K(L^2(Y)\oplus L^2(X))$ and $\tilde{\cl B} = \cl B(L^2(Y)\oplus L^2(X))$.
Write $P_X$ (resp. $P_Y$) for the projection from $L^2(Y)\oplus L^2(X)$ onto
$L^2(X)$ (resp. $L^2(Y)$).
Let $\Phi:\tilde{\cl K}\otimes A\to \tilde{\cl K}\otimes \cl B(H)$ be given by
\begin{equation}\label{eq_tbt2}
\Phi\left(\left(\begin{matrix} x_{1,1} & x_{1,2}\\ x_{2,1} & x_{2,2}\end{matrix}\right)\otimes a\right) =
\left(\begin{matrix} 0 & S_{\nph}(x_{1,2}\otimes a)\\ 0 & 0\end{matrix}\right).
\end{equation}
By \cite[Example 1]{huruya}, given a $C^*$-algebra $B$, there is a canonical normal *-isomorphism
\begin{equation}\label{eq_ksecd}
 (\tilde{\cl K}\otimes B)^{**} \cong \tilde{\cl B}\bar\otimes B^{**}.
\end{equation}
  Hence we may view the second dual $\Phi^{**}$ as a completely bounded map from
$\tilde{\cl B}\bar\otimes A^{**}$ to $\tilde{\cl B}\bar\otimes \cl B(H)^{**}$, extending $\Phi$.
As $\tilde{\cl K}\otimes A$ is weak* dense in $\tilde{\cl B}\bar\otimes A^{**}$ we have that for any $T\in \tilde{\cl B}\bar\otimes A^{**}$ there exists $\Psi(T)\in \cl B\bar\otimes \cl B(H)^{**}$ such that
$$\Phi^{**}(T)=\left(\begin{matrix} 0 & \Psi(T)\\ 0 & 0\end{matrix}\right).$$
In particular,
$$\Phi^{**}((P_Y\otimes \id)T(P_X\otimes \id))=\left(\begin{matrix} 0 & \Psi((P_Y\otimes\id)T(P_X\otimes\id))\\ 0 & 0\end{matrix}\right),$$
and the mapping $\tilde \Psi =
\Psi|_{\cl B\bar\otimes A^{**}}:\cl B\bar\otimes A^{**}\to \cl B\bar\otimes \cl B(H)^{**}$
is completely bounded and weak* continuous.
Hence the composition
$$(\id \otimes P)\circ \tilde\Psi : \cl B\bar\otimes A^{**} \to \cl B\bar\otimes \cl B(H)$$
is a completely bounded weak* continuous map, extending $S_{\nph}$.
The fact that
this extension is unique follows by weak* density.
\end{proof}

We will use the same symbol, $S_{\nph}$, to denote the map obtained in Proposition \ref{p_ext}.
We note that if $S_{\nph}$ satisfies equation (\ref{eq_vw}), that is, if
$S_\nph (S)=W^*(\id\otimes\rho)(S)V$ for all $S\in \cl K\otimes  A$,
then $S_{\nph}(T\otimes a) = W^*(T\otimes\rho(a))V$ for all $T\in\cl B$ and all
$a\in A^{**}$, where $\rho$ has been canonically extended to $A^{**}$.

While Proposition \ref{p_ext} implies that, if $\nph\in \frak{S}_0(X,Y;A)$, then
the map $S_{\nph}$ on $\cl K\otimes  A$ has a weak* continuous extension to
$\cl B\otimes  A^{**}$, an analogous extension is not guaranteed to exist in
representations of $A$ different from the universal one.
This motivates the following definition.

\begin{definition}\label{d_conthe}
Let $A$ be a separable C*-algebra and $\theta$ be a faithful *-representation
of $A$ on a separable Hilbert space. An element $\nph\in \frak{S}_0(X,Y;A)$
will be called a \emph{Schur $\theta$-multiplier} if the map
$S_{\nph_{\theta}} : \cl K\otimes\theta(A) \to \cl K\otimes\theta(A)$
can be extended to a weak* continuous map
on $\cl B\bar\otimes\theta(A)''$.
\end{definition}

The notion of a Schur $\theta$-multiplier will be used in the subsequent sections.

\section{Herz-Schur multipliers and transference}\label{s_ggamma}

In this section, we introduce and study Herz-Schur multipliers of crossed products.
We assume throughout that
$G$ is a locally compact group. Left Haar measure on $G$ will be denoted by $m_G$ or $m$ and
integration with respect to $m_G$
along the variable $s$ will be denoted by $ds$.
Let $\lambda^G : G\to \cl B(L^2(G))$ be the left regular representation of $G$;
thus, $\lambda^G_t\xi(s) = \xi(t^{-1}s)$, $\xi\in L^2(G)$, $s,t\in G$.
We write  $C^*_r(G)$ (resp. $\vn(G)$)
for the reduced group C*-algebra (resp. the von Neumann algebra) of $G$,
that is, for the closure in the norm topology (resp. in the weak* topology)
of $\lambda^G(L^1(G))$.
As customary, we let $A(G)$ (resp. $B(G)$, $B_{\lambda}(G)$) be the Fourier
(resp. the Fourier-Stieltjes, the reduced Fourier-Stieltjes) algebra of $G$.
We note the canonical identifications $A(G)^* = \vn(G)$, $C^*(G)^* = B(G)$
and $C_r^*(G)^* = B_{\lambda}(G)$ \cite{eym}.

Let $A$ be a separable C*-algebra. In this section, unless otherwise stated,
$H$ will denote the Hilbert space of the
universal representation of $A$; we consider $A$ as a C*-subalgebra of $\cl B(H)$.
Let $\alpha : G\rightarrow {\rm Aut}(A)$
be a continuous (with respect to point-norm topology) group homomorphism;
thus, $(A,G,\alpha)$ is a C*-dynamical system.
The space $L^1(G,A)$ is a *-algebra
with respect to the product $\times$ given by
$(f\times g) (t) = \int_{G} f(s) \alpha_s(g(s^{-1}t))ds$ and
the involution $*$ given by $f^*(s) = \Delta(s)^{-1} \alpha_s(f(s^{-1})^*)$.

Let $\pi : A\rightarrow \cl B(L^2(G,H))$ be the *-representation defined by
$(\pi(a)\xi)(t) = \alpha_{t^{-1}}(a)(\xi(t))$, $t\in G$, and $\lambda : G\rightarrow \cl B(L^2(G,H))$ be the
(continuous) unitary representation given by $(\lambda_t\xi)(s) = \xi(t^{-1}s)$, $s,t\in G$.
Note that
$$\pi(\alpha_t(a)) = \lambda_t \pi(a) \lambda_t^*, \ \ t\in G;$$
thus, the pair $(\pi,\lambda)$ is a covariant representation of $(A,G,\alpha)$ and
hence gives rise to a *-representation
$\pi\rtimes \lambda : L^1(G,A)\rightarrow \cl B(L^2(G,H))$ given by
$$(\pi\rtimes \lambda) (f) = \int_G \pi(f(s))\lambda_s ds, \ \ \ f\in L^1(G,A).$$
The \emph{reduced crossed product}
$A\rtimes_{\alpha,r} G$ of $A$ by $\alpha$ is, by definition,
the closure of $(\pi\rtimes \lambda)(L^1(G,A))$ in the operator norm of $\cl B(L^2(G,H))$ \cite[7.7.4]{ped2}.
We let $A\rtimes_{\alpha,r}^{w^*} G$ be the weak* closure of $A\rtimes_{\alpha,r} G$.

A bounded function $F : G\rightarrow \cl B(A)$ will be called \emph{pointwise measurable}
if, for every $a\in A$, the map $s\to F(s)(a)$ is a weakly measurable function from $G$ into $A$.
Suppose that $(\rho,\tau)$ is a covariant representation of
the dynamical system $(A,G,\alpha)$ on the Hilbert space $K$.
We say that $F$ is \emph{$(\rho,\tau)$-fiber continuous},
if the map
$$G\to \cl B(K), \ \ \ \ s\to \rho(F(s)(a))\tau_s,$$
is weak* continuous for every $a\in A$.
We will say that $F$ is \emph{fiber continuous} if
$F$ is $(\pi,\lambda)$-fiber continuous.
Note that if $F$ is bounded and point norm continuous then it is pointwise measurable
and fiber continuous.

We further say that $F$ is \emph{almost $(\rho,\tau)$-fiber continuous}
if, for every $\omega\in \cl B(K)_*$ and every $a\in A$, the function
$$s\to \langle \rho(F(s)(a))\tau_s, \omega\rangle$$
coincides, up to a null set, with a continuous function.
Almost $(\pi,\lambda)$-fiber continuous functions will be referred to simply
as \emph{almost fiber continuous}.

For each $f\in L^1(G,A)$, let
$F\cdot f \in L^1(G,A)$ be the function
given by $(F\cdot f) (s) = F(s)(f(s))$, $s\in G$.
It is easy to see that if $F$ is pointwise measurable then
$F\cdot f$ is weakly measurable
and hence $F\cdot f\in L^1(G,A)$ for every $f\in L^1(G,A)$;
in fact, $\|F\cdot f\|_1\leq \|F\|_{\infty} \|f\|_1$, where
$\|F\|_{\infty} = \sup_{s\in G}\|F(s)\|$.

\begin{definition}\label{d_hsmds}
A pointwise measurable function
$F : G\rightarrow CB(A)$
will be called a \emph{Herz-Schur $(A,G,\alpha)$-multiplier} if the map
$$S_F : (\pi\rtimes \lambda)(L^1(G,A)) \to (\pi\rtimes \lambda)(L^1(G,A))$$
given by
$$S_F((\pi\rtimes \lambda)(f)) = (\pi\rtimes \lambda)(F\cdot f)$$
is completely bounded.
\end{definition}

We denote by $\frak{S}(A,G,\alpha)$ the set of all Herz-Schur $(A,G,\alpha)$-multipliers.
If $F\in \frak{S}(A,G,\alpha)$ then the map $S_F$
extends to a completely bounded map on $A\rtimes_{r,\alpha} G$.
This (unique) extension will be denoted again by $S_F$.
We let $\|F\|_{\mm} = \|S_F\|_{\cb}$.

\medskip

\begin{remark}\label{herz_schur_ind} \rm
{\bf (i)} If $F_1, F_2\in \frak{S}(A,G,\alpha)$,
letting $F_1 + F_2 : G\to CB(A)$ and $F_1 F_2 : G\to CB(A)$ be given by
$(F_1 + F_2)(s) = F_1(s) + F_2(s)$ and $(F_1 F_2)(s) = F_1(s) \circ F_2(s)$, we see that
$S_{F_1 + F_2} = S_{F_1} + S_{F_2}$
and $S_{F_1 F_2} = S_{F_1} S_{F_2}$. Thus, $\frak{S}(A,G,\alpha)$
is an algebra with respect to the operations just defined.

\smallskip

\noindent {\bf (ii) }
Recall that a bounded continuous function $u : G\to \bb{C}$ is called
a \emph{completely bounded} (or \emph{Herz-Schur}) \emph{multiplier} \cite{ch}
of the Fourier algebra $A(G)$ of $G$ if
$uv\in A(G)$ for every $v\in A(G)$, and the map $m_u : v\to uv$ on $A(G)$ is
completely bounded.
The space of all Herz-Schur multipliers of $A(G)$ will be denoted as usual by $M^{\rm cb}A(G)$.
If $u\in M^{\rm cb}A(G)$ then
the dual $S_u$ of $m_u$ is a completely bounded (and weak* continuous)
linear map on the von Neumann algebra $\vn(G)$ of $G$, such that
$S_u(\lambda_t^G) = u(t)\lambda_t^G$, $t\in G$.
Moreover, $S_u$ leaves the reduced C*-algebra $C^*_r(G)$ of $G$ invariant, and
$$S_u\left(\int_G f(s)\lambda_s ds\right) = \int_G u(s) f(s)\lambda_s ds, \ \ f\in L^1(G).$$
The reduced crossed product of $\bb{C}$ by
the (unique) action $\alpha$ of a locally compact group $G$ on $\bb{C}$
coincides with $C^*_r(G)$. Identifying $\cl B(\bb{C})$ with $\bb{C}$ in the natural way,
we have that a bounded continuous function $u : G\to \bb{C}$ is a
Herz-Schur $(\bb{C},G,\alpha)$-multiplier if and only if $u$ is a Herz-Schur multiplier.

\smallskip

\noindent {\bf (iii) }
Suppose that $\theta : A\to \cl B(K)$ is a faithful *-representation of $A$ on a Hilbert space $K$.
Let $\pi^{\theta} : A\to \cl B(L^2(G,K))$ be given by
$(\pi^{\theta}(a)\xi)(t) = \theta(\alpha_{t^{-1}}(a))(\xi(t))$, $t\in G$,
while $\lambda^{\theta} : G\rightarrow \cl B(L^2(G,K))$ be given by
$(\lambda^{\theta}_t\xi)(s) = \xi(t^{-1}s)$, $s,t\in G$.
Then the pair $(\pi^{\theta},\lambda^{\theta})$ is a covariant representation of $(A,G,\alpha)$.
Since $A$ is assumed to be universally represented, up to a *-isomorphism,
$K$ is a closed subspace of $H$ that reduces $A$,
$\pi^{\theta}(a)$ is the restriction of $\pi(a)$ to $L^2(G,K)$,
while $\lambda^{\theta}_s$ is the restriction of $\lambda_s$ to $L^2(G,K)$.
In the sequel, we let
$A\rtimes_{\alpha,\theta} G = (\pi^{\theta}\rtimes \lambda^{\theta})(A\rtimes_{\alpha} G)$
and
$A\rtimes_{\alpha,\theta}^{w^*} G = \overline{A\rtimes_{\alpha,\theta} G}^{w^*}$.

By \cite[Theorem 7.7.5]{ped2}, the closure of
$(\pi^{\theta}\rtimes \lambda^{\theta})( L^1(G,A))$ is *-isomorphic to
$A\rtimes_{r,\alpha} G$ and a pointwise measurable function
$F : G\to CB(A)$ is a Herz-Schur $(A,G,\alpha)$-multiplier if and only if
the map
\begin{equation}\label{eq_Stheta}
S_{F}^{\theta} : (\pi^{\theta}\rtimes \lambda^{\theta})(f) \mapsto (\pi^{\theta}\rtimes \lambda^{\theta})(F\cdot f)
\end{equation}
is completely bounded. Thus, Herz-Schur $(A,G,\alpha)$-multipliers
can be defined starting with any faithful representation of $A$
instead of its universal representation.
\end{remark}

In the case $A = \bb{C}$, the maps on $C^*_r(G)$ associated with Herz-Schur multipliers
automatically have a weak* continuous extension to (completely bounded) maps on
the weak* closure $\vn(G)$ of $C^*_r(G)$. Such extension is not ensured to
exist in the general case -- this motivates the following definition.

\begin{definition}\label{d_wsthe}
Let $A$ be a separable C*-algebra, $K$ be a Hilbert space and
$\theta : A\to \cl B(K)$ be a faithful *-representation. A function
$F:G\to CB(A)$  will be called a
\emph{$\theta$-multiplier} if the map
$$\Phi_F^\theta : \pi^{\theta}(a)\lambda^{\theta}_t \mapsto  \pi^{\theta}(F(t)(a))\lambda^{\theta}_t, \ t\in G, \ a\in A,$$
has an extension to a bounded
weak* continuous map on $A\rtimes_{\alpha,\theta}^{w^*} G$.

A $\theta$-multiplier $F$ will be called a
\emph{Herz-Schur $\theta$-multiplier}
if the extension of $\Phi_F^\theta$ to $A\rtimes_{\alpha,\theta}^{w^*} G$
is completely bounded.
\end{definition}

We note that, in Definition \ref{d_wsthe}, we do not require the pointwise measurability of
the function $F$.
The weak* continuous extension of the map $\Phi_F^\theta$ therein will still be denoted
by the same symbol.

\begin{remark}\label{r_extl1}
{\rm Let $A$ be a separable C*-algebra, $K$ be a Hilbert space and
$\theta : A\to \cl B(K)$ be a faithful *-representation.
Suppose that
$F : G\to \cl B(A)$ is a
bounded  map and
$\Phi : A\rtimes_{\alpha,\theta}^{w^*} G \to A\rtimes_{\alpha,\theta}^{w^*} G$
is a bounded weak* continuous map such that, for almost all $t\in G$,
$$\Phi(\pi^{\theta}(a)\lambda^{\theta}_t) = \pi^{\theta}(F(t)(a))\lambda^{\theta}_t, \ \ a\in A.$$
Then, for any $\omega\in \cl B(L^2(G,K))_*$ and $f\in L^1(G,A)$,
the function
$s\mapsto \langle\pi^\theta(F(s)(f(s)))\lambda_s^\theta,\omega\rangle$ is measurable, and
$$\Phi((\pi^{\theta}\rtimes\lambda^{\theta})(f)) = (\pi^{\theta}\rtimes\lambda^{\theta})(F\cdot f),
\ \ \ f\in L^1(G,A).$$}
\end{remark}
\begin{proof}
Let $\omega\in \cl B(L^2(G,K))_*$ and $f\in L^1(G,A)$.
Since the function
$s\mapsto \langle\pi^\theta(f(s))\lambda_s^\theta,\Phi_*(\omega)\rangle$ is  measurable so is $s\mapsto \langle\pi^\theta(F(s)(f(s)))\lambda_s^\theta,\omega\rangle$. We have
\begin{eqnarray*}
\left\langle \Phi\left(\int \pi^{\theta}(f(s))\lambda_s^{\theta}ds\right), \omega\right\rangle
& = &
\left\langle \int \pi^{\theta}(f(s))\lambda_s^{\theta}ds, \Phi_*(\omega)\right\rangle\\
& = &
\int  \langle \pi^{\theta}(f(s))\lambda_s^{\theta},\Phi_*(\omega)\rangle ds\\
& = &
\int  \langle \Phi(\pi^{\theta}(f(s))\lambda_s^{\theta}),\omega \rangle ds\\
& = &
\int  \langle \pi^{\theta}(F(s)(f(s)))\lambda_s^{\theta},\omega \rangle ds\\
& = &
\left\langle \int \pi^{\theta}(F(s)(f(s)))\lambda_s^{\theta} ds,\omega \right\rangle\\
& = &
\langle(\pi^{\theta}\rtimes\lambda^{\theta})(F\cdot f),\omega\rangle.
\end{eqnarray*}
The claim follows.
\end{proof}

\begin{lemma}\label{blambda}
Let $\theta$ be a faithful *-representation of $A$ on a Hilbert space
$K$. Let $F : G\to CB(A)$ be a pointwise measurable map for which there exists $C>0$ such that
\begin{equation}\label{eq_pithe}
\|(\pi^\theta\rtimes\lambda^\theta)(F\cdot f)\|\leq C\|(\pi^\theta\rtimes\lambda^\theta)(f)\|,
\ \ \ f\in L^1(G,A).
\end{equation}
For $\omega\in \cl B(L^2(G,K))_*$, let
$g_{\omega}(s) = \langle\pi^\theta(F(s)(a))\lambda_s^\theta,\omega\rangle$, $s\in G$.
Then $g_{\omega}$ coincides with an element of $B_\lambda(G)$ up to a null set.
\end{lemma}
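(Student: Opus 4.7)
The plan is to prove the lemma by exhibiting $g_\omega$ as (the function form of) a bounded linear functional on $C_r^*(G)$, which then identifies it with an element of $B_\lambda(G)\cong C_r^*(G)^*$.

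First, I would fix $a\in A$ and $\omega\in\cl B(L^2(G,K))_*$ and, for each $f\in L^1(G)$, consider the elementary tensor $f\otimes a\in L^1(G,A)$ (i.e.\ the function $s\mapsto f(s)a$). Since $L^2(G,K)\cong L^2(G)\otimes K$ identifies $\lambda^\theta_s$ with $\lambda^G_s\otimes I_K$, a direct computation yields
\[(\pi^\theta\rtimes\lambda^\theta)(f\otimes a) \;=\; \pi^\theta(a)\bigl(\lambda^G(f)\otimes I_K\bigr),\]
and hence $\|(\pi^\theta\rtimes\lambda^\theta)(f\otimes a)\|\leq\|a\|\,\|\lambda^G(f)\|$. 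On the other hand, $F\cdot(f\otimes a)$ is the function $s\mapsto f(s)F(s)(a)$, so by the assumed estimate (\ref{eq_pithe}),
\[\bigl\|(\pi^\theta\rtimes\lambda^\theta)(F\cdot(f\otimes a))\bigr\| \;\leq\; C\,\|a\|\,\|\lambda^G(f)\|.\]

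Next, I would pair this with $\omega$. Pulling the scalar-valued $f(s)$ outside and using the definition of $g_\omega$,
\[\int_G f(s)g_\omega(s)\,ds \;=\; \bigl\langle (\pi^\theta\rtimes\lambda^\theta)(F\cdot(f\otimes a)),\,\omega\bigr\rangle.\]
(Measurability of $g_\omega$, needed for the left-hand side to make sense, follows from the pointwise measurability of $F$ together with the strong continuity of $s\mapsto\lambda^\theta_s$, as in Remark \ref{r_extl1}.) Combining with the previous norm bound gives
\[\biggl|\int_G f(s)g_\omega(s)\,ds\biggr| \;\leq\; C\,\|a\|\,\|\omega\|\,\|\lambda^G(f)\|,\qquad f\in L^1(G).\]

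Consequently, the linear functional $\lambda^G(f)\mapsto \int_G f(s)g_\omega(s)\,ds$ is well defined and bounded on the dense subspace $\lambda^G(L^1(G))$ of $C_r^*(G)$, and therefore extends to a bounded linear functional $\Omega$ on $C_r^*(G)$. Under the canonical identification $C_r^*(G)^*=B_\lambda(G)$, there is a unique $u\in B_\lambda(G)$ with $\Omega(\lambda^G(f))=\int_G u(s)f(s)\,ds$ for every $f\in L^1(G)$. Comparing with the displayed equality and using that $f\in L^1(G)$ is arbitrary, we obtain $u(s)=g_\omega(s)$ for almost every $s\in G$, which is the desired conclusion. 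The only delicate point in this argument is verifying the measurability of $s\mapsto g_\omega(s)$ (so that the integral expressions are meaningful) and the routine but important identification $\lambda^\theta \cong \lambda^G\otimes I_K$; once these are in place, the estimate and the duality $B_\lambda(G)=C_r^*(G)^*$ do the rest.
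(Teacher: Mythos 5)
Your argument is correct, and it is genuinely more direct than the one in the paper. You bound the scalar quantity $\int_G f(s)g_\omega(s)\,ds$ by pairing the single crossed-product element $(\pi^\theta\rtimes\lambda^\theta)(F\cdot(f\otimes a))$ against $\omega$, apply the hypothesis together with $\|(\pi^\theta\rtimes\lambda^\theta)(f\otimes a)\|\leq\|a\|\,\|\lambda^G(f)\|$, and then invoke the duality $C_r^*(G)^*=B_\lambda(G)$; since $g_\omega$ is bounded and measurable, the a.e.\ identification with the resulting element of $B_\lambda(G)$ follows. The paper instead first proves a Fell-absorption-type Claim (that $\tilde\pi^\theta\rtimes(\lambda^\theta\otimes\lambda^G)$ is a multiple of $\pi^\theta\rtimes\lambda^\theta$) in order to control the \emph{operator} norm $\|\lambda^G(fg_\omega)\|\leq\|\omega\|\,\|(\pi^\theta\rtimes\lambda^\theta)(F\cdot(f\otimes a))\|$; because the constant function $1$ need not lie in $B_\lambda(G)$ when $G$ is non-amenable, this operator bound does not by itself control $\int fg_\omega$, so the paper must then test against arbitrary $v\in B_\lambda(G)$, conclude $vg_\omega\in B_\lambda(G)$ a.e.\ for every such $v$, and finish with \cite[Proposition 1.2]{ch}. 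Your route avoids the tensor-product machinery and the appeal to that proposition entirely, at the cost of not recording the intermediate fact that $\lambda^G(f)\mapsto\lambda^G(fg_\omega)$ is bounded on $C_r^*(G)$ — a fact that is, in any case, implied once $g_\omega\in B_\lambda(G)$ is known. The only points to keep explicit are the ones you flag: the measurability of $s\mapsto g_\omega(s)$ (which follows as in Remark \ref{r_extl1} from the pointwise measurability and boundedness of $F$) and the legitimacy of interchanging the $L^1(G,A)$-integral with the pairing against the normal functional $\omega$, which the paper itself uses freely.
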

\begin{proof}
Let $\tilde{\pi}^\theta$ be the *-representation of $A$ on $L^2(G,K)\otimes L^2(G)$
given by
$\tilde\pi^\theta(a)=\pi^\theta(a)\otimes I_{L^2(G)}$, $a\in A$.
It is easy to see that the pair $(\tilde\pi^\theta,\lambda^\theta\otimes\lambda^G)$ is a covariant representation.
We first establish the following:

\medskip

\noindent {\bf Claim. }
{\it The representation
$\tilde\pi^\theta\rtimes(\lambda^\theta\otimes\lambda^G)$ is unitarily equivalent to a direct sum of copies of the representation $\pi^\theta\rtimes\lambda^\theta$.}

\medskip

\noindent {\it Proof of the Claim. }
Let $U_1: L^2(G)\otimes L^2(G)\to L^2(G,L^2(G))$ be the unitary operator given by
$$U_1(\xi\otimes \eta)(s)=\xi(s)\lambda_{s^{-1}}^G\eta, \ \ \xi,\eta\in L^2(G).$$
Let $(\eta_i)_{i\in I}$ be an orthonormal basis for $L^2(G)$
and let $L^2(G)^I$ be the direct sum of $I$-copies of $L^2(G)$.
Let $U_2 : L^2(G, L^2(G))\to L^2(G)^I$ be the operator given by
$$U_2f=(g_i)_{i\in I}, \ \mbox{ where } g_i(s) = \langle f(s),\eta_i\rangle, s\in G.$$
It is easy to see that $U_2$ is a unitary operator.
Set $U=U_2U_1$; thus, $U$ is a unitary operator from $L^2(G)\otimes L^2(G)$ onto  $L^2(G)^I$.
For an operator $T$ on $L^2(G)$,
we shall denote by $T^{\infty}$ its ampliation on $L^2(G)^I$, given by
$$T^{\infty}(\xi_i)_{i\in I} = (T\xi_i)_{i\in I}.$$
In what follows we will use some natural identifications:
of $K\otimes L^2(G)\otimes L^2(G)$ with $L^2(G,K)\otimes L^2(G)$,
of $L^2(G,K)^I$ with $L^2(G,K^I)$,
and of $K\otimes L^2(G)^I$ with $L^2(G,K)^I$,
the latter being
given by $\xi\otimes(\xi_i)_{\in I}\to(\tilde \xi_i)_{i\in I}$, where $\tilde\xi_i(s)=\xi_i(s)\xi$

Let $\xi_1 \in K$, $\xi_2, \xi_3\in L^2(G)$,
$a\in A$ and $t\in G$.
Let $\zeta_i$ be the function given by $\zeta_i(s) = \xi_2(s) \langle \xi_3,\lambda_s^G\eta_i\rangle$, $s\in G$.
For almost all $s\in G$ we have
\begin{eqnarray*}
&&(I_K\otimes U)\tilde\pi^\theta(a)(\xi_1\otimes\lambda_t^G\xi_2\otimes\lambda_t^G\xi_3)(s)\\
&&=(\theta(\alpha_{s^{-1}}(a))\xi_1(\lambda_t^G\xi_2)(s)\langle\lambda_{s^{-1}}^G\lambda_t^G\xi_3,\eta_i\rangle)_{i\in I}\\
&&=
(\theta(\alpha_{s^{-1}}(a))\xi_1(\lambda_t^G\xi_2)(s)\langle\xi_3,\lambda_{t^{-1}s}^G\eta_i\rangle)_{i\in I}\\
&&=(\theta(\alpha_{s^{-1}}(a))\xi_1 \lambda_t^G\zeta_i(s))_{i\in I}\\
&&= \pi^\theta(a)^{\infty}(\lambda_t^\theta(\xi_1\otimes\zeta_i)(s))_{i\in I}\\
&&= \pi^\theta(a)^{\infty}((\lambda_t^\theta\otimes I_{L^2(G)})(I_K\otimes U)\xi_1\otimes \xi_2\otimes\xi_3))(s).
\end{eqnarray*}
The calculations imply
$$(I_K\otimes U)(\tilde\pi^\theta\rtimes(\lambda^\theta\otimes\lambda^G))(f)
= (\pi^\theta\rtimes\lambda^\theta(f))^{\infty}(I_K\otimes U), \ \ f\in L^1(G,A),$$
and the Claim is proved.

\medskip

Fix $\omega\in \cl B(L^2(G,H))_*$ and write $g = g_{\omega}$.
Let $v\in B_\lambda(G)$ and $w$ be the linear functional on $\{\lambda^G(f):f\in L^1(G)\}$ given by
$$w(\lambda^G(f))=\int_G f(s)g(s)v(s)ds.$$
Let $f\in L^1(G)$, $a\in A$ and $\tilde f(s)=f(s)a$, $s\in G$; clearly, $\tilde f \in L^1(G,A)$.
Fix $\xi$, $\eta\in L^2(G)$  and let $\omega_{\xi,\eta}\in \cl B(L^2(G))^*$
be the vector functional given by
$\omega_{\xi,\eta}(T) = \langle T\xi,\eta\rangle$, $T\in \cl B(L^2(G))$. Then
\begin{eqnarray*}
\langle\lambda^G(fg)\xi,\eta\rangle&=& \int_G f(s)\langle\pi^\theta(F(s)(a))\lambda^\theta_s,\omega\rangle\langle\lambda_s^G\xi,\eta\rangle ds\\
&=& \left\langle\int_G( \pi^\theta(F(s)(\tilde f(s)))\otimes I_{L^2(G)})(\lambda^\theta_s\otimes\lambda_s^G)ds,\omega\otimes\omega_{\xi,\eta}\right\rangle\\
&=&\left\langle\int_G( \tilde\pi^\theta(F(s)(\tilde f(s))))(\lambda^\theta_s\otimes\lambda_s^G)ds,\omega\otimes\omega_{\xi,\eta}\right\rangle\\
&=&\langle\tilde\pi^\theta\rtimes(\lambda^\theta\otimes\lambda^G)(F\cdot\tilde f),\omega\otimes\omega_{\xi,\eta}\rangle.
 \end{eqnarray*}
Using the Claim, we have
$$\|\lambda^G(fg)\|\leq \|\omega\|\|\tilde\pi^\theta\rtimes(\lambda^\theta\otimes\lambda^G)(F\cdot\tilde f)\|
=\|\omega\|\|(\pi^\theta\rtimes\lambda^\theta)(F\cdot \tilde f)\|.$$
As $(\pi^\theta\rtimes\lambda^\theta)(F\cdot\tilde f)=\int f(s)\pi^\theta(F(s)(a))\lambda_s^\theta ds$,
using (\ref{eq_pithe}), we have
\begin{eqnarray*}
\|w(\lambda^G(f)\|&\leq&\|v\|_{B_\lambda(G)}\|\lambda^G(fg)\|
\leq C\|\omega\|\|v\|_{B_\lambda(G)}\|(\pi^\theta\rtimes\lambda^\theta)(\tilde f)\|\\
&\leq&  C\|\omega\|\|v\|_{B_\lambda(G)}\left\|\pi^\theta(a)\int f(s)\lambda_s^\theta ds\right\|\\
&\leq&
C\|\omega\|\|v\|_{B_\lambda(G)}\|\pi^\theta (a)\|\left\|\int f(s)\lambda_s^\theta ds\right\|\\
&\leq& C\|\omega\|\|v\|_{B_\lambda(G)}\|\pi^\theta(a)\|\|\lambda^\theta(f)\|\\
&=& C\|\omega\|\|v\|_{B_\lambda(G)}\|\pi^\theta(a)\|\|\lambda^G(f)\|.
\end{eqnarray*}

Hence, there exists $z\in B_\lambda(G)$ such that $\omega(\lambda^G(f))=\int f(s)z(s)ds$, $f\in L^1(G)$.
It follows that $z=vg$ almost everywhere. As this holds for any $v\in B_\lambda(G)$,
we have that $g$ is  almost everywhere equal to a  function from
$B_\lambda(G)$ (see \cite[Proposition 1.2]{ch}).
\end{proof}

\begin{remark}\label{newremark} \rm
For $\omega\in \cl B(L^2(G,K))_*$, let
$g_\omega(s)=\langle \pi^\theta(F(s)(a))\lambda_s^\theta,\omega\rangle$ and
let $b_\omega\in B_\lambda(G)$ be such that $b_\omega=g_\omega$ almost everywhere.
If $G$ is second countable and $K$ is separable,  then
under the assumption of the previous lemma
there exists a null subset $N\subseteq G$ such that
$g_\omega(t)=b_\omega(t)$ for any $t\in G\setminus N$ and $\omega\in \cl B(L^2(G,K))_*$.
Indeed, in this case we have that $\cl B(L^2(G,K)_*$ is separable. Let $\{\omega_n:n\in\mathbb N\}$ be a dense subset of $\cl B(L^2(G,K)_*$ and $a\in A$.
Let $N_n\subseteq G$ be a null set such that $g_{\omega_n}(s)=b_{\omega_n}(s)$ for all  $s\in G\setminus N_n$,
and $N=\cup_{n\in \mathbb N} N_n$.
Clearly, $N$ is a null set. If $\omega\in \cl B(L^2(G,K)_*$, let $\{\omega_{n(k)}\}_k$
be a subsequence converging to $\omega$ in norm.
Then $\{b_{\omega_{n(k)}}\}_k$ is a Cauchy sequence of bounded continuous functions:
letting $C = \|\omega\|\sup_{s\in G}\|F(s)\|$,
given $\varepsilon > 0$ there exists $L \in \bb{N}$
such that for any $l,k>L$, we have
$$|b_{\omega_{n(k)}}(t) - b_{\omega_{n(l)}}(t)|=|g_{\omega_{n(k)}}(t)-g_{\omega_{n(l)}}(t)|\leq C\|\omega_{n(k)}-\omega_{n(l)}\|\leq C\varepsilon,$$
whenever $t\in G\setminus N$.
As $b_{\omega_{n(k)}}$ is continuous, $| b_{\omega_{n(k)}}(t)-b_{\omega_{n(l)}}(t)|<C\varepsilon$ for every $t\in G$. Thus, the sequence
$\{b_{\omega_{n(k)}}\}_k$ converges to a continuous function, say $b$.
On the other hand, $b_{\omega_{n(k)}}(t)\to g_{\omega}(t)$ whenever $t\in G\setminus N$.
Therefore $g_{\omega}(t)=b(t)$ for $t\in G\setminus N$.
As $b_\omega$ and $b$ are continuous, and $b_\omega=g_\omega$ almost everywhere, we have  $b=b_\omega$, giving the statement.
\end{remark}

{\it For the rest of the section we will assume that
$G$ is a second countable locally compact group.}
In this case, the measure space $(G,m)$ is standard.

If $t\in G$, let us call a \emph{Dirac family at $t$} a
net $(f_U)_U\subseteq L^1(G)$ consisting of non-negative functions,
indexed by the directed set of all open neighbourhoods of $t$ with compact closure,
with $\supp f_U\subseteq U$ and $\|f_U\|_1 = 1$.

\begin{lemma}\label{l_point}
Let $(A,G,\alpha)$ be a C*-dynamical system, $F$ be a Herz-Schur $(A,G,\alpha)$-multiplier,
and $\theta$ be a faithful *-representation of $A$ on a separable Hilbert space $K$. Then there exists a null set
$N\subseteq G$ such that if $t\in G\setminus N$ and $(f_U)_{U}$ is a Dirac family at $t$ then
$$S_F^\theta((\pi^\theta\rtimes\lambda^\theta)(f_U\otimes a))\to_U \pi^\theta(F(t)(a))\lambda_t^\theta$$
in the weak* topology, for every $a\in A$.
\end{lemma}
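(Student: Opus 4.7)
The plan is to apply $S_F^\theta$ explicitly to the elementary tensor, pair with predual functionals to reduce the weak* convergence to scalar pointwise convergence, and then appeal to Lemma \ref{blambda} and Remark \ref{newremark} to replace the coefficient functions by continuous representatives against which Dirac families converge pointwise. Concretely, since $(F\cdot(f_U\otimes a))(s)=f_U(s)F(s)(a)$, one has
\[S_F^\theta\bigl((\pi^\theta\rtimes\lambda^\theta)(f_U\otimes a)\bigr)=\int_G f_U(s)\,\pi^\theta(F(s)(a))\lambda_s^\theta\,ds.\]
Thus, fixing $\omega\in \cl B(L^2(G,K))_*$ and writing $g_\omega^a(s):=\langle\pi^\theta(F(s)(a))\lambda_s^\theta,\omega\rangle$, it suffices to show that $\int_G f_U(s) g_\omega^a(s)\,ds\longrightarrow g_\omega^a(t)$ for every $\omega$ and every $a$, whenever $t$ lies outside a single null set $N$.

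Since $F$ is a Herz-Schur multiplier, the hypothesis (\ref{eq_pithe}) of Lemma \ref{blambda} holds with $C=\|S_F^\theta\|_{\cb}$; hence, for each $a\in A$ and each $\omega$, the function $g_\omega^a$ coincides almost everywhere with a continuous function $b_\omega^a\in B_\lambda(G)$. Using second countability of $G$ and separability of $K$, Remark \ref{newremark} upgrades this to the existence, for each fixed $a$, of a single null set $N_a\subseteq G$ such that $g_\omega^a(t)=b_\omega^a(t)$ for every $t\in G\setminus N_a$ and every $\omega\in \cl B(L^2(G,K))_*$.

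To obtain a null set uniform in $a$, I would invoke separability of $A$: choose a countable dense subset $\{a_n\}_{n\in\bb N}\subseteq A$ and set $N=\bigcup_{n\in\bb N} N_{a_n}$. For $t\in G\setminus N$ and a Dirac family $(f_U)_U$ at $t$, continuity of $b_\omega^{a_n}$ at $t$ yields
\[\int_G f_U(s)g_\omega^{a_n}(s)\,ds=\int_G f_U(s)b_\omega^{a_n}(s)\,ds\longrightarrow b_\omega^{a_n}(t)=g_\omega^{a_n}(t).\]
For a general $a\in A$, the pointwise bound $|g_\omega^a(s)-g_\omega^{a_n}(s)|\leq\|\omega\|\|F\|_\infty\|a-a_n\|$, valid for every $s\in G$, allows a standard three-$\varepsilon$ argument to transfer the convergence from the dense sequence to all of $A$; the same bound applies at $s=t$ to control $|g_\omega^a(t)-g_\omega^{a_n}(t)|$.

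The main obstacle is not the Dirac-convergence itself (which is automatic once we sit at a continuity point of a continuous function) but the careful book-keeping needed to produce a single null set valid for every $a$ and every $\omega$ simultaneously. This is exactly where the three separability/countability assumptions (of $A$, $K$, and $G$) combine, via Lemma \ref{blambda}, Remark \ref{newremark}, and a countable dense subset of $A$.
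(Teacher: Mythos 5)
Your proof is correct and follows essentially the same route as the paper's: reduce weak* convergence to pairing with predual functionals, invoke Lemma \ref{blambda} and Remark \ref{newremark} to replace $s\mapsto\langle\pi^\theta(F(s)(a))\lambda_s^\theta,\omega\rangle$ by a continuous representative off a null set, and then use that Dirac families reproduce continuous functions at their centre. Your additional step of taking a countable dense subset of $A$ and a three-$\varepsilon$ approximation to make the null set uniform in $a$ is a point the paper's proof leaves implicit (it fixes $a$ at the outset), so this extra book-keeping is a genuine, if minor, improvement in precision.
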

\begin{proof}
Let $a\in A$.
By Lemma \ref{blambda} and Remark \ref{newremark} there exists a null set
$N\subseteq G$ such that for any $\xi$, $\eta\in L^2(G,H)$ there exists a continuous function $b_{\xi,\eta} : G\to \bb{C}$ such that
$$\langle \pi^\theta(F(s)(a))\lambda_s^\theta \xi,\eta\rangle = b_{\xi,\eta}(s) \ \ \
\mbox{ for  all } s\in G\setminus N.$$
Fix $\xi$, $\eta\in L^2(G,H)$. For $t\in G\setminus N$, set
$$C_U = \sup_{s\in U}|b_{\xi,\eta}(s) - b_{\xi,\eta}(t)|.$$
Since $b_{\xi,\eta}$ is continuous, $C_U\to_U 0$.
We have
\begin{eqnarray*}
& & \left|\langle S_F^\theta((\pi^\theta\rtimes\lambda^\theta)(f_U\otimes a))\xi,\eta\rangle - \langle\pi^\theta(F(t)(a))\lambda_t^\theta\xi,\eta\rangle\right|\\
& = &
\left|\int \langle \pi^\theta(F(s)(a))f_U(s)\lambda_s^\theta \xi,\eta\rangle ds
- \int \langle \pi^\theta(F(t)(a))f_U(s)\lambda_t^\theta\xi,\eta\rangle ds\right|\\
& \leq &
\int f_U(s) |\langle \pi^\theta(F(s)(a))\lambda_s^\theta \xi,\eta\rangle
- \langle \pi^\theta(F(t)(a))\lambda_t^\theta\xi,\eta\rangle| ds\\
& = &
\int f_U(s) |b_{\xi,\eta}(s) - b_{\xi,\eta}(t)|ds
\leq
C_U \int f_U(s)ds  = C_U \longrightarrow\mbox{}_U \ 0.
\end{eqnarray*}
The statement follows from the fact that
the weak operator topology and the weak* topology coincide on bounded sets.
\end{proof}

If $\nph : G\times G\to CB(A)$ is a bounded pointwise measurable function,
let $\cl T(\nph) : G\times G\to CB(A)$ be the function given by
$$\cl T(\nph)(s,t)(a) = \alpha_t(\nph(s,t)(\alpha_{t^{-1}}(a))), \ \ \ a\in A.$$
It is easy to see that, for each $a\in A$, the function $(s,t)\to \cl T(\nph)(s,t)(a)$
from $G\times G$ into $A$ is bounded and weakly measurable.
The inverse $\cl T^{-1}$ of $\cl T$ is given by
$\cl T^{-1}(\nph)(s,t)(a) = \alpha_{t^{-1}}(\nph(s,t)(\alpha_t(a)))$, $a\in A$.
For a map $F : G\to CB(A)$, let
$N(F):G\times G\to CB(A)$ be the function given by
$$N(F)(s,t) = F(ts^{-1}),$$
and
$$\cl N(F) = \cl T^{-1}(N(F));$$
thus,
$\cl N(F) : G\times G\to CB(A)$ is the function given by
$$\cl N(F)(s,t)(a) = \alpha_{t^{-1}}(F(ts^{-1})(\alpha_{t}(a))), \ \ \ a\in A, \ s,t\in G.$$
Note that if $F$ is pointwise measurable then so is $\cl N(F)$.

The next theorem is a dynamical system version of
the well-known description of Herz-Schur multipliers in
terms of Schur multipliers \cite{bf}.
Recall that, given a map $\nph : X\times Y\to CB(A)$ and a faithful *-representation
$\theta$ of $A$, we let $\nph_{\theta} : X\times Y\to CB(\theta(A))$ be the map given by
$\nph_{\theta}(x,y)(\theta(a)) = \theta(\nph(x,y)(a))$, $a\in A$.
Note that, if $\nph$ is pointwise measurable then so is $\nph_{\theta}$.

\begin{theorem}\label{th_tr}
Let $(A,G,\alpha)$ be a C*-dynamical system and $F : G\to CB(A)$ be a
pointwise measurable map. The following are equivalent:

(i) \ $F$ is a Herz-Schur $(A,G,\alpha)$-multiplier;

(ii) $\cl N(F)$ is a Schur $A$-multiplier.

\noindent Moreover, if (i) holds then $\|F\|_{\rm m} = \|\cl N(F)\|_{\frak{S}}$.
\end{theorem}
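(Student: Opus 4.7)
The plan is to link the two operations on a common dense subspace and then transfer complete boundedness in both directions. For $f\in L^1(G,A)\cap L^2(G,A)$, I would first compute the integral kernel of $(\pi\rtimes\lambda)(f)$: substituting $r=st^{-1}$ in the defining integral and using the identity $\int g(st^{-1})\,\Delta(t^{-1})\,dt = \int g(v)\,dv$ for left Haar measure, one sees that $(\pi\rtimes\lambda)(f)$ is a Hilbert--Schmidt operator on $L^2(G,H)$ with $A$-valued kernel $k_f(y,x)=\alpha_{y^{-1}}(f(yx^{-1}))\,\Delta(x^{-1})$, so that $(\pi\rtimes\lambda)(f)\in\cl S_2(G\times G,A)\subseteq \cl K\otimes A$. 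A direct substitution then yields the key identity
\[
\cl N(F)(x,y)\bigl(k_f(y,x)\bigr) = k_{F\cdot f}(y,x),
\]
which amounts to $S_{\cl N(F)}((\pi\rtimes\lambda)(f))=(\pi\rtimes\lambda)(F\cdot f)$ as operators in $\cl K\otimes A$.

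For (ii)$\Rightarrow$(i): assuming $\cl N(F)\in\frak{S}(G,G;A)$, the map $S_{\cl N(F)}$ is completely bounded on $\cl K\otimes A$, and by the kernel identity its restriction to the operator-norm-dense subspace $(\pi\rtimes\lambda)(L^1(G,A)\cap L^2(G,A))$ of $A\rtimes_{r,\alpha}G$ agrees with $S_F$. Continuity then yields that $S_F$ is completely bounded with $\|F\|_{\rm m}\leq\|\cl N(F)\|_{\frak S}$.

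For (i)$\Rightarrow$(ii), starting from a completely bounded $S_F$, I would apply the Haagerup--Paulsen--Wittstock representation (as in the proof of Theorem~\ref{th_chschura}) to obtain a separable Hilbert space $\cl L$, a non-degenerate $*$-representation $\sigma:A\rtimes_{r,\alpha}G\to\cl B(\cl L)$, and bounded operators $V,W:L^2(G,H)\to\cl L$ with $S_F(T)=W^*\sigma(T)V$ and $\|S_F\|_{\rm cb}=\|V\|\|W\|$. The structure theory of crossed products supplies a covariant pair $(\rho,u)$ implementing $\sigma$, with $\rho:A\to\cl B(\cl L)$ non-degenerate and $u:G\to U(\cl L)$ a unitary representation. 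In analogy with the bimodularity step in the proof of Theorem~\ref{th_chschura}, and using the $\lambda$-covariance of $\sigma$ to induce the appropriate $L^\infty(G)$-module action on $\cl L$, I would then extract functions $\widetilde V,\widetilde W\in L^\infty(G,\cl B(H,\cl L))$ and verify
\[
\cl N(F)(x,y)(a)=\widetilde W(y)^*\rho(a)\widetilde V(x)\quad\text{a.e.,}
\]
so that Theorem~\ref{th_chschura} exhibits $\cl N(F)$ as a Schur $A$-multiplier with $\|\cl N(F)\|_{\frak S}\leq\|F\|_{\rm m}$.

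The main obstacle in the second direction is establishing this $L^\infty(G)$-bimodularity: it encodes the Toeplitz-type translation-invariance of the kernel defining $\cl N(F)$ and must be produced from the $\lambda$-covariance of $\sigma$, in precise analogy with the Bo\.{z}ejko--Fendler transference trick in the case $A=\bb{C}$. Combining the two resulting norm inequalities then yields the isometric identification $\|F\|_{\rm m}=\|\cl N(F)\|_{\frak S}$.
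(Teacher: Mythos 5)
There is a genuine gap in your (ii)$\Rightarrow$(i) direction: the ``common dense subspace'' you propose does not exist. For non-compact $G$ the operator $(\pi\rtimes\lambda)(f)$ is \emph{not} Hilbert--Schmidt and does not lie in $\cl K\otimes A$ (already for $A=\bb{C}$, $\lambda(f)$ is a convolution operator, which is not compact for $f\neq 0$). Concretely, the kernel $k_f(y,x)=\alpha_{y^{-1}}(f(yx^{-1}))\Delta(x^{-1})$ satisfies $\int\int\|k_f(y,x)\|^2\,dx\,dy=\bigl(\int_G\Delta(y)^{-1}dy\bigr)\cdot\int_G\|f(w)\|^2\Delta(w)\,dw$, and the first factor diverges unless $G$ is compact. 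Since $S_{\cl N(F)}$ is a priori defined only on $\cl K\otimes A$ (and its weak* extension to $\cl B\bar\otimes\theta(A)''$ in a given representation is exactly what is \emph{not} automatic --- this is the whole point of Definition \ref{d_conthe}), you cannot simply evaluate it on $(\pi\rtimes\lambda)(f)$. The paper's proof works around this by truncating: it applies $S_{\nph}$ to the genuinely Hilbert--Schmidt operators $T_{h_{K_n}}=(M_{\chi_{K_n}}\otimes I)(\pi^{\theta}\rtimes\lambda^{\theta})(f)(M_{\chi_{K_n}}\otimes I)$ with kernels supported in $K_n\times K_n$, shows via dominated convergence that $\langle S_{\nph}(T_{h_{K_n}})\xi,\eta\rangle\to\langle(\pi^{\theta}\rtimes\lambda^{\theta})(F\cdot f)\xi,\eta\rangle$, and then uses the uniform bound $\|T_{h_{K_n}}\|\leq\|(\pi^{\theta}\rtimes\lambda^{\theta})(f)\|$ to conclude $\|(\pi^{\theta}\rtimes\lambda^{\theta})(F\cdot f)\|\leq\|\nph\|_{\frak{S}}\|(\pi^{\theta}\rtimes\lambda^{\theta})(f)\|$. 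Your kernel identity is correct and is exactly what drives this computation, but the limiting argument is essential, not a technicality.

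In the (i)$\Rightarrow$(ii) direction your outline names the right ingredients (Haagerup--Paulsen--Wittstock plus the covariant decomposition $\tilde\rho=\rho_A\rtimes\rho_G$), but two points need repair. First, there is no $L^{\infty}(G)$-module action on $\cl L$ to be extracted from $\sigma$: the crossed product does not contain $C_0(G)$ or $L^\infty(G)$, so no bimodularity argument in the style of Lemma \ref{l_modp} is available. The correct mechanism is to \emph{twist}: set $\alpha(s)=\rho_G(s^{-1})V\lambda_s^{\theta}$ and $\beta(t)=\rho_G(t^{-1})W\lambda_t^{\theta}$; these are weakly continuous, hence lie in $L^{\infty}(G,\cl B(L^2(G,H'),K))$, and a direct computation gives $\beta(t)^*\rho_A(a)\alpha(s)=\pi^{\theta}(\cl N(F)(s,t)(a))$. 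Second, to even start that computation you need the \emph{pointwise} identity $\pi^{\theta}(F(t)(a))\lambda_t^{\theta}=W^*\rho_A(a)\rho_G(t)V$ for almost every $t$, which does not follow formally from the integrated identity $S_F((\pi\rtimes\lambda)(f))=W^*\sigma((\pi\rtimes\lambda)(f))V$; the paper obtains it via Dirac families (Lemma \ref{l_point}), which in turn rests on the regularity result that $s\mapsto\langle\pi^{\theta}(F(s)(a))\lambda_s^{\theta},\omega\rangle$ agrees almost everywhere with a function in $B_{\lambda}(G)$ (Lemma \ref{blambda} and Remark \ref{newremark}). This step is absent from your proposal and is the technical heart of the transference.
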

\begin{proof}
(i)$\Rightarrow$(ii)
Suppose that $F$ is a Herz-Schur $(A,G,\alpha)$-multiplier
and let $\theta$ be a faithful *-representation of $A$ on a separable Hilbert space $H'$.
By the Haagerup-Paulsen-Wittstock Theorem,
there exist a separable Hilbert space $K$, a *-representation $\rho : A\rtimes_{\alpha,\theta} G\to \cl B(K)$
and operators $V,W : L^2(G,H')\to K$ such that
\begin{equation}\label{eq_sf}
S_F^{\theta}(T) = W^*\rho(T) V, \ \ \ T\in A\rtimes_{\alpha,\theta} G,
\end{equation}
and $\|S_{F}\|_{\rm cb} = \|V\|\|W\|$.
Let $A\rtimes_{\alpha} G$ be the full crossed product of $A$ by $\alpha$,
$q : A\rtimes_{\alpha} G\to A\rtimes_{\alpha,\theta} G$ be the quotient map and
$\tilde{\rho} = \rho\circ q$; thus,
$\tilde{\rho} : A\rtimes_{\alpha} G \to \cl B(K)$ is a *-representation.
Let $\rho_A : A\to \cl B(K)$ be a  *-representation and
$\rho_G : G\to \cl B(K)$ be a strongly continuous unitary representation such that
$\tilde{\rho} = \rho_A\rtimes \rho_G$.
For $f\in L^1(G,A)$, we have
$$\tilde{\rho}(f) = \int \rho_A(f(s))\rho_G(s)ds.$$
Setting $T = (\pi^{\theta}\rtimes \lambda^{\theta})(f)$ in equation (\ref{eq_sf}), we have
\begin{equation}\label{eq_wfv}
\int \pi^{\theta}(F(s)(f(s))) \lambda_s^{\theta} ds = W^* \left(\int \rho_A(f(s))\rho_G(s) ds\right)V.
\end{equation}
Standard arguments
show that, if $a\in A$ and $(f_U)_U$ is a Dirac family at the point $t\in G$, then
\begin{equation}\label{eq_corho}
\int \rho_A((f_U\otimes a)(s))\rho_G(s) ds \longrightarrow\mbox{}_U \ \rho_A(a) \rho_G(t)
\end{equation}
in the weak operator topology.
Taking $f = f_U\otimes a$ in (\ref{eq_wfv}) and using Lemma \ref{l_point}, we obtain
a null set $N$ such that
\begin{equation}\label{eq_atpo}
\pi^{\theta}(F(t)(a))\lambda_t^{\theta} = W^*\rho_A(a) \rho_G(t) V, \text{ for all } t\in G\setminus N.
\end{equation}
For $s$ and $t$ in $G$, let
$\alpha(s), \beta(t) \in \cl B(L^2(G,H'),K)$ be given by
$$\alpha(s) = \rho_G(s^{-1}) V \lambda_{s}^{\theta}, \ \ \beta(t) = \rho_G(t^{-1}) W \lambda_{t}^{\theta};$$
then for every $\xi\in L^2(G,H')$, the functions $s\to \alpha(s)\xi$ and $s\to\beta(s)\xi$ are weakly continuous and hence $\alpha$ and $\beta$ belong to $L^{\infty}(G,$ $\cl B(L^2(G,$ $H'),$ $K))$.
Using (\ref{eq_atpo}), for all $(s,t)\in G\times G$ with $ts^{-1}\in G\setminus N$, we obtain
\begin{eqnarray*}
\beta(t)^*\rho_A(a)\alpha(s)
& = &
\lambda_{t^{-1}}^{\theta}W^*\rho_G(t)  \rho_A(a) \rho_G(s^{-1}) V \lambda_{s}^{\theta}\\
& = &
\lambda_{t^{-1}}^{\theta}W^* \rho_A(\alpha_t(a)) \rho_G(ts^{-1}) V \lambda_{s}^{\theta}\\
& = &
\lambda_{t^{-1}}^{\theta}\pi^{\theta}(F(ts^{-1})(\alpha_{t}(a)))\lambda_{ts^{-1}}^{\theta} \lambda_{s}^{\theta}\\
& = &
\pi^{\theta}(\alpha_{t^{-1}}(F(ts^{-1})(\alpha_{t}(a))) = \pi^{\theta}(\cl N(F)(s,t)(a)).
\end{eqnarray*}
As $\{(s,t):ts^{-1}\in N\}$ is a null set for the product measure $m\times m$,
by Theorem \ref{th_chschura},
$\cl N(F)$ is a Schur $A$-multiplier.
Moreover,
$$\esssup_{s\in G}\|\alpha(s)\| = \|V\| \ \mbox{ and } \ \esssup_{t\in G}\|\beta(t)\| = \|W\|$$
and hence
\begin{equation}\label{eq_normine}
\|\cl N(F)\|_{\frak{S}} \leq \|V\|\|W\| = \|F\|_{\rm m}.
\end{equation}

(ii)$\Rightarrow$(i)
Let $\theta : A\to \cl B(H')$ be a faithful *-representation, where $H'$ is
a separable Hilbert space.
Suppose that $\nph \stackrel{def}{=} \cl N(F)_{\theta}$ is a Schur $A$-multiplier.
Fix $f\in C_c(G,A)$.
For $\xi\in L^2(G,H')$, for almost all $t\in G$ we have
\begin{eqnarray*}
(\pi^{\theta}\rtimes \lambda^{\theta})(f)\xi (t)
& = &
\int \pi^{\theta}(f(s))\lambda_s^{\theta} \xi(t) ds\\
& = & \int \theta(\alpha_{t^{-1}}(f(s)))(\lambda_s^{\theta} \xi(t)) ds\\
& = &
\int \theta(\alpha_{t^{-1}}(f(s)))(\xi(s^{-1}t)) ds\\
& = & \int \Delta(r)^{-1} \theta(\alpha_{t^{-1}}(f(tr^{-1})))(\xi(r)) dr.
\end{eqnarray*}
Fix a compact set $K\subseteq G$. Then the function
\begin{equation}\label{kernel_op}
h_K : (t,r)\to \chi_{K\times K}(t,r) \Delta(r)^{-1} \theta(\alpha_{t^{-1}}(f(tr^{-1})))
\end{equation}
belongs to $L^2(G\times G, \theta(A))$.
Note that
\begin{equation}\label{eq_compK}
T_{h_K} = (M_{\chi_K}\otimes I_{H'})(\pi^{\theta}\rtimes \lambda^{\theta})(f) (M_{\chi_K}\otimes I_{H'}).
\end{equation}
We have
\begin{eqnarray*}
\nph\cdot h_K (t,r)
& = &
\chi_{K\times K}(t,r) \Delta(r)^{-1}  \theta(\alpha_{t^{-1}}(F(tr^{-1})(\alpha_{t}(\alpha_{t^{-1}}(f(tr^{-1}))))\\
& = &
\chi_{K\times K}(t,r) \Delta(r)^{-1}  \theta(\alpha_{t^{-1}}(F(tr^{-1})(f(tr^{-1})))).
\end{eqnarray*}
Let $\xi,\eta\in L^2(G,H')$ have
compact support
and $(K_n)_{n=1}^{\infty}$ be an increasing sequence of compact sets such that
$G = \cup_{n=1}^{\infty} K_n$
(such a sequence exists since $G$ is second countable and hence $\sigma$-compact).
Then
\begin{equation}\label{eq_hkn}
\langle S_{\nph}(T_{h_{K_n}})\xi,\eta\rangle
= \int_{K_n\times K_n} \Delta(r)^{-1}  \langle \theta(\alpha_{t^{-1}}(F(tr^{-1})(f(tr^{-1}))))\xi(r),\eta(t)\rangle drdt.
\end{equation}
On the other hand,
\begin{eqnarray*}
&& |\Delta(r)^{-1}  \langle \theta(\alpha_{t^{-1}}(F(tr^{-1})(f(tr^{-1}))))\xi(r),\eta(t)\rangle|\\
& \leq & \Delta(r)^{-1} \|F\|_{\infty} \|f(tr^{-1})\|\|\xi(r)\|\|\eta(t)\|,
\end{eqnarray*}
while
$$\int_{G\times G} \Delta(r)^{-1} \|f(tr^{-1})\|\|\xi(r)\|\|\eta(t)\|drdt
= \langle f'\ast \xi',\eta'\rangle,$$
where
$f', \xi',\eta' : G\to \bb{R}$ are the functions given by
$f'(s) = \|f(s)\|$, $\xi'(s) = \|\xi(s)\|$ and $\eta'(s) = \|\eta(s)\|$, $s\in G$.
Thus, an application of the Lebesgue Dominated Convergence Theorem
shows that the right hand side of
(\ref{eq_hkn}) converges to
$$\int \Delta(r)^{-1}  \langle \theta(\alpha_{t^{-1}}(F(tr^{-1})(f(tr^{-1}))))\xi(r),\eta(t)\rangle drdt
= \langle (\pi^{\theta}\rtimes\lambda^{\theta})(F\cdot f))\xi,\eta\rangle;$$
thus,
\begin{equation}\label{star}
\lim_{n\to\infty}\langle S_\nph(T_{h_{K_n}})\xi,\eta\rangle
= \langle (\pi^\theta\rtimes\lambda^\theta)(F\cdot f)\xi,\eta\rangle.
\end{equation}
By (\ref{eq_compK}), $T_{h_{K_n}} \to_{n\to\infty} (\pi^{\theta}\rtimes \lambda^{\theta})(f)$
in the weak* topology.
It follows that
\begin{eqnarray*}
|\langle (\pi^{\theta}\rtimes\lambda^{\theta})(F\cdot f))\xi,\eta\rangle|
& \leq & \limsup_{n\in \bb{N}} |\langle S_{\nph}(T_{h_{K_n}})\xi,\eta\rangle|\\
& \leq & \|\nph\|_{\frak{S}} \|\xi\|\|\eta\| \limsup_{n\in \bb{N}}\|T_{h_{K_n}}\|\\
& \leq &\|\nph\|_{\frak{S}} \|\xi\|\|\eta\| \|(\pi^{\theta}\rtimes\lambda^{\theta})(f)\|.
\end{eqnarray*}
Thus,
$$\|(\pi^{\theta}\rtimes\lambda^{\theta})(F\cdot f))\|\leq
\|\nph\|_{\frak{S}} \|(\pi^{\theta}\rtimes\lambda^{\theta})(f)\|,$$
and hence the map $S_F^{\theta}$ is bounded. Similar arguments show that, in fact,
$S_F^{\theta}$ is completely bounded and $\|S_{F}^{\theta}\|_{\rm cb}\leq \|\nph\|_{\frak{S}}$.
By Remark \ref{herz_schur_ind} (iii), $F$ is a Herz-Schur multiplier and
$$\|F\|_{\rm m} \leq \|\cl N(F)\|_{\frak{S}}.$$
The last inequality and (\ref{eq_normine}) show that
$\|F\|_{\rm m} = \|\cl N(F)\|_{\frak{S}}$
and the proof is complete.
\end{proof}

\begin{remark}\label{r_classt}
{\rm In the case $A = \bb{C}$, Theorem \ref{th_tr} reduces to
the classical transference theorem for Herz-Schur multipliers \cite{bf}:
a continuous function $u : G\to \bb{C}$ is a Herz-Schur multiplier of $A(G)$
if and only if $N(u)$ is a Schur multiplier on $G\times G$. }
\end{remark}

\begin{corollary}\label{c_weakst}
Let $\theta$ be a faithful *-representation of $A$ on a separable Hilbert space
and $F : G\to CB(A)$ be a pointwise measurable function.
The following are equivalent:

(i) \ $F$ is a Herz-Schur multiplier such that $S_F^\theta$ can be extended to a weak* continuous  map on $A\rtimes_{\alpha,\theta}^{w^*} G$;

(ii) there exists a weak* continuous completely bounded linear map $\Phi$ on
$A\rtimes_{\alpha,\theta}^{w^*} G$ such that for almost all $t\in G$
\begin{equation}\label{eq_Nweakst}
\Phi(\pi^{\theta}(a)\lambda_t^{\theta}) = \pi^{\theta}(F(t)(a))\lambda_t^{\theta}, \ \ \ a\in A.
\end{equation}

In particular, (i) holds true if $\cl N(F)$ is a Schur $\theta$-multiplier.
\end{corollary}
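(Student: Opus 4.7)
My plan is to prove (i) $\Rightarrow$ (ii) and (ii) $\Rightarrow$ (i) directly, and then derive the supplementary statement by combining the (ii) $\Rightarrow$ (i) direction of Theorem \ref{th_tr} with the definition of a Schur $\theta$-multiplier. For (i) $\Rightarrow$ (ii), take $\Phi$ to be the weak* continuous completely bounded extension of $S_F^\theta$ granted by (i). Fix $a\in A$, $t\in G$ and a Dirac family $(f_U)_U$ at $t$. Strong continuity of $\lambda^\theta$ makes $s\mapsto\langle\pi^\theta(a)\lambda_s^\theta\xi,\eta\rangle$ continuous for every $\xi,\eta\in L^2(G,K)$, so
\[\langle(\pi^\theta\rtimes\lambda^\theta)(f_U\otimes a)\xi,\eta\rangle = \int f_U(s)\langle\pi^\theta(a)\lambda_s^\theta\xi,\eta\rangle\,ds \longrightarrow \langle\pi^\theta(a)\lambda_t^\theta\xi,\eta\rangle.\]
Combined with the uniform norm bound $\|(\pi^\theta\rtimes\lambda^\theta)(f_U\otimes a)\|\leq\|a\|$, this gives $(\pi^\theta\rtimes\lambda^\theta)(f_U\otimes a)\to\pi^\theta(a)\lambda_t^\theta$ in the weak* topology. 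Lemma \ref{l_point} supplies a null set $N$ with $S_F^\theta((\pi^\theta\rtimes\lambda^\theta)(f_U\otimes a))\to\pi^\theta(F(t)(a))\lambda_t^\theta$ weak* for $t\notin N$, and applying weak* continuity of $\Phi$ to the first convergence produces (\ref{eq_Nweakst}) for every $t\notin N$.

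For (ii) $\Rightarrow$ (i), Remark \ref{r_extl1} applied to $\Phi$ yields $\Phi((\pi^\theta\rtimes\lambda^\theta)(f)) = (\pi^\theta\rtimes\lambda^\theta)(F\cdot f)$ for every $f\in L^1(G,A)$, so $\Phi$ restricts on $A\rtimes_{\alpha,\theta} G$ to the map $S_F^\theta$ of (\ref{eq_Stheta}). Transfer of complete boundedness from $\Phi$, together with Remark \ref{herz_schur_ind}(iii), shows that $F$ is a Herz-Schur $(A,G,\alpha)$-multiplier, and $\Phi$ itself is the sought weak* continuous extension.

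For the final assertion, set $\nph=\cl N(F)_\theta$ and let $\tilde S_\nph$ be the weak* continuous completely bounded extension of $S_\nph$ to $\cl B(L^2(G))\bar\otimes\theta(A)''$ provided by Definition \ref{d_conthe}. Under the identifications $L^2(G,K)\cong L^2(G)\otimes K$ and $L^\infty(G,\theta(A)'')\cong\cl D_G\bar\otimes\theta(A)''$, the crossed product $A\rtimes_{\alpha,\theta}^{w^*} G$ sits inside $\cl B(L^2(G))\bar\otimes\theta(A)''$. The computation culminating in (\ref{star}) in the proof of Theorem \ref{th_tr} shows that for every $f\in C_c(G,A)$ one has $T_{h_{K_n}}\to(\pi^\theta\rtimes\lambda^\theta)(f)$ and $S_\nph(T_{h_{K_n}})\to(\pi^\theta\rtimes\lambda^\theta)(F\cdot f)$ in the weak* topology; weak* continuity of $\tilde S_\nph$ then forces $\tilde S_\nph((\pi^\theta\rtimes\lambda^\theta)(f))=(\pi^\theta\rtimes\lambda^\theta)(F\cdot f)$. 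By weak* density of $(\pi^\theta\rtimes\lambda^\theta)(C_c(G,A))$ in $A\rtimes_{\alpha,\theta}^{w^*}G$ and weak* continuity of $\tilde S_\nph$, the restriction $\Phi:=\tilde S_\nph|_{A\rtimes_{\alpha,\theta}^{w^*}G}$ takes values in $A\rtimes_{\alpha,\theta}^{w^*}G$ and verifies (ii), which yields (i) by the first implication. The main subtlety is this last preservation-of-subspace step, which depends on the ambient identifications that embed the crossed product into $\cl B(L^2(G))\bar\otimes\theta(A)''$ and on the fact that $\tilde S_\nph$ already sends the weak* dense set $(\pi^\theta\rtimes\lambda^\theta)(C_c(G,A))$ into $A\rtimes_{\alpha,\theta} G$.
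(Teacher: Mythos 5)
Your proposal is correct and follows essentially the same route as the paper: the Dirac-family argument via Lemma \ref{l_point} for (i)$\Rightarrow$(ii), Remark \ref{r_extl1} together with Remark \ref{herz_schur_ind}(iii) for (ii)$\Rightarrow$(i), and the convergence (\ref{star}) from the proof of Theorem \ref{th_tr} plus weak* density for the final assertion. The only cosmetic difference is that you route the last claim through condition (ii) (whose pointwise form you have not quite re-derived), whereas the paper concludes (i) directly from the fact that the restriction of the extended $S_{\cl N(F)_\theta}$ agrees with $S_F^\theta$ on $A\rtimes_{\alpha,\theta}G$; since your argument already establishes exactly that, the conclusion is unaffected.
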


\begin{proof}
(i)$\Rightarrow$(ii)
Suppose that $F : G\to CB(A)$ is
a Herz-Schur multiplier and
let $\Phi$ be the (unique) weak* continuous
extension of $S_{F}^{\theta}$ to $A\rtimes_{\alpha,\theta}^{w^*} G$.
By Lemma \ref{l_point}, there exists a null set $N\subseteq G$ such that,
if $(f_U)_U$ is a Dirac family at $t\in G\setminus N$ then, for every $a\in A$,
$$\Phi((\pi^{\theta}\rtimes\lambda^{\theta})(f_U\otimes a))\to_U \pi^{\theta}(F(t)(a))\lambda_t^{\theta},$$
while, as can be easily checked,
$$(\pi^{\theta}\rtimes\lambda^{\theta})(f_U\otimes a)\to_U \pi^{\theta}(a)\lambda_t^{\theta}$$
(both limits are in the weak operator topology).
It follows that (\ref{eq_Nweakst}) holds for all $t\in G\setminus N$.

(ii)$\Rightarrow$(i)
By Remark \ref{r_extl1},
if $f\in L^1(G,A)$, then
$$\Phi\left(\int_G\pi^{\theta}(f(t))\lambda_t^{\theta}dt\right) =
\int_G\pi^{\theta}(F(t)(f(t)))\lambda_t^{\theta}dt,$$
{\it i.e.} $\Phi$ is a weak* continuous extension of $S_F^\theta$. Since $\Phi$ is completely bounded, Remark \ref{herz_schur_ind} (iii)
implies that $F\in \frak{S}(G,A,\alpha)$.

Suppose that the map $S_{\cl N(F)_{\theta}}$
has a weak* continuous extension to a map on $\cl B(L^2(G))\bar\otimes \theta(A)''$.
By the proof of Theorem \ref{th_tr},  if $G=\cup_{n=1}^\infty K_n$, where $K_n$ is an increasing sequence of compact subsets of $G$, $f\in C_c(G,A)$  and $T_{h_K}$ is the operator with the kernel $h_K$ given by (\ref{kernel_op}), then  $T_{h_{K_n}}\to (\pi^\theta\rtimes\lambda^\theta)(f)$  in the weak* topology.
As $S_{\cl N(F)_\theta}$ has a weak* continuous extension,  we have
$$\langle S_{\cl N(F)_\theta}(T_{h_{K_n}})\xi,\eta\rangle \to
\langle S_{\cl N(F)_\theta}((\pi^\theta\rtimes\lambda^\theta)(f))\xi,\eta \rangle,\ \ \ \xi,\eta\in L^2(G,H').$$
On the other hand, by (\ref{star}), $$\langle S_{\cl N(F)_\theta}(T_{h_{K_n}})\xi,\eta\rangle\to\langle S_F^\theta(\pi^\theta\rtimes\lambda^\theta(f))\xi,\eta\rangle.$$ Thus,
$S_F^{\theta}$ is the restriction of $S_{\cl N(F)_{\theta}}$ to $A\rtimes_{\alpha,\theta} G$,
and hence $S_F^\theta$ possesses a weak* continuous extension to $A\rtimes_{\alpha,\theta}^{w^*} G$.
\end{proof}

\begin{remark}\rm \label{herz_schur_theta}
We remark that if in Corollary  \ref{c_weakst} we assume also that
$F : G\to CB(A)$ is $(\pi^\theta,\lambda^\theta)$-fiber continuous then
condition (ii) is equivalent to $F$ being a Herz-Schur $\theta$-multiplier.
Therefore, in this case,
$F$ is a Herz-Schur   $\theta$-multiplier if and only if $F$ is a Hez-Schur multiplier such that $S_F^\theta$ possesses  a weak*-continuous extension to  $A\rtimes_{\alpha,\theta}^{w^*} G$.
\end{remark}

\begin{corollary}\label{c_norms}
Let $\theta$ be a faithful *-representation of $A$ on a separable Hilbert space
and $F : G\to CB(A)$ be a
Herz-Schur $\theta$-multiplier.
Then $\sup_{t\in G} \|F(t)\|_{\cb}\leq \|F\|_{\mm}$.
\end{corollary}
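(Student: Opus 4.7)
The plan is to exploit that, for each fixed $t\in G$ and each $a\in A$, the element $\pi^\theta(a)\lambda_t^\theta$ is an isometric copy of $a$ inside $A\rtimes_{\alpha,\theta}^{w^*} G$, so that the completely bounded map $\Phi_F^\theta$ directly controls the action of $F(t)$ on matrices. The key observations I will assemble are the following.

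First, $\pi^\theta$ is a faithful $*$-representation of the C*-algebra $A$ (because $\theta$ is faithful and each $\alpha_{s^{-1}}$ is an automorphism), hence it is completely isometric. Since $\lambda_t^\theta$ is a unitary on $L^2(G,K)$, right multiplication by $I_n\otimes\lambda_t^\theta$ is an isometry on $M_n(\cl B(L^2(G,K)))$, so for every $n\in\bb N$ and every $[a_{ij}]\in M_n(A)$,
\[
\bigl\|[\pi^\theta(a_{ij})\lambda_t^\theta]\bigr\|_{M_n(\cl B(L^2(G,K)))}
= \bigl\|[\pi^\theta(a_{ij})]\bigr\|_{M_n(\cl B(L^2(G,K)))}
= \bigl\|[a_{ij}]\bigr\|_{M_n(A)}.
\]
I will also note that $\pi^\theta(a)\lambda_t^\theta$ lies in $A\rtimes_{\alpha,\theta}^{w^*} G$, since $(\pi^\theta\rtimes\lambda^\theta)(f_U\otimes a)\to \pi^\theta(a)\lambda_t^\theta$ in the weak operator topology for any Dirac family $(f_U)_U$ at $t$ (the same convergence used in Lemma \ref{l_point}).

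Now fix $t\in G$, $n\in\bb N$ and $[a_{ij}]\in M_n(A)$, and apply $\id_n\otimes\Phi_F^\theta$ to the matrix $X=[\pi^\theta(a_{ij})\lambda_t^\theta]$. By the definition of $\Phi_F^\theta$,
\[
(\id_n\otimes\Phi_F^\theta)(X) = [\pi^\theta(F(t)(a_{ij}))\lambda_t^\theta],
\]
and the same isometric identity applied to $[F(t)(a_{ij})]\in M_n(A)$ gives that this matrix has norm exactly $\|[F(t)(a_{ij})]\|_{M_n(A)}$. Combining with $\|\id_n\otimes\Phi_F^\theta\|\le \|F\|_{\mm}$ and the computation of $\|X\|$ above yields
\[
\bigl\|[F(t)(a_{ij})]\bigr\|_{M_n(A)} \le \|F\|_{\mm}\,\bigl\|[a_{ij}]\bigr\|_{M_n(A)}.
\]
Taking the supremum over $n$ and $[a_{ij}]$ produces $\|F(t)\|_{\cb}\le \|F\|_{\mm}$, and taking the supremum over $t\in G$ finishes the proof.

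There is no serious obstacle here; the only point demanding a brief justification is the fact that $\pi^\theta$ is completely isometric and that $\pi^\theta(a)\lambda_t^\theta$ really belongs to $A\rtimes_{\alpha,\theta}^{w^*} G$ so that $\Phi_F^\theta$ may be evaluated on it. Both points are standard, the first from general C*-theory and the second from the Dirac family approximation already used earlier in the section.
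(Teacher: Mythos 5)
Your proof is correct and follows essentially the same route as the paper, whose entire argument is the one-line observation that the conclusion is immediate from the existence of the completely bounded weak* continuous map $\Phi_F^\theta$ together with the facts that $\pi^\theta$ is a (completely) isometric representation and $\lambda_t^\theta$ is unitary. You have simply written out the matrix-level computation and the two justifications (complete isometry of $\pi^\theta$, membership of $\pi^\theta(a)\lambda_t^\theta$ in the weak* crossed product via Dirac families) that the paper leaves implicit.
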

\begin{proof}
Immediate from Corollary \ref{c_weakst} and the fact that
$\lambda_s^{\theta}$ and $\pi^{\theta}$ are isometries.
\end{proof}

An {\it equivariant representation} of the dynamical system $(A,G,\alpha)$ on a Hilbert $A$-module
$\cl N$ is a pair $(\rho,\tau)$, where $\rho: A\to \cl L(\cl N)$ is a *-representation of $A$ on $\cl N$ and $\tau$ is a homomorphism from $G$ into the group $\cl I(\cl N)$ of all $\mathbb C$-linear, invertible, bounded maps
on $\cl N$, which satisfy:
\begin{enumerate}
\item
$\rho(\alpha_s(a))=\tau(s)\rho(a)\tau(s)^{-1}$ for all $s\in G$ and $a\in A$;
\item
$\alpha_s(\langle \xi,\eta\rangle_A)=\langle\tau(s)\xi,\tau(s)\eta\rangle_A$,
for all $s\in G$ and $\xi$, $\eta\in \cl N$;
\item
$\tau(s)(\xi\cdot a)=(\tau(s)\xi)\cdot a$, for all $s\in G$, $\xi\in\cl N$ and $a\in A$;
\item the map $s\mapsto\tau(s)\xi$ is continuous for every $\xi\in\cl N$.
\end{enumerate}
This definition was given in \cite{bc} for discrete twisted dynamical systems.
An example of such an equivariant representation can be obtained as follows. Let
 $$H_A^G=\{\xi:G\to A: \xi(\cdot)^*\xi(\cdot)\in L^1(G,A)\}.$$
 We have that $H_A^G$ is a Hilbert $A$-module with respect to the right action
 given by $(\xi\cdot a)(s):=\xi(s)a$ and the  inner product given by
 $\langle\xi,\eta\rangle_A=\int_G\xi(s)^*\eta(s)ds$.
 The regular equivariant representation
 of $(A,G,\alpha)$ on $H_A^G$ is the pair  $(\rho,\tau)$ given by
 $$\rho(a)\xi(h)=a\xi(h),\quad (\tau(t)\xi)(s)=\alpha_t(\xi(t^{-1}s)).$$
It is easy to check that $(\rho, \tau)$ satisfies the conditions (1)-(4).

The following corollary was proved in \cite[Theorem 4.8]{bc} for discrete dynamical systems using
different arguments.

\begin{corollary}
Let $(\rho,\tau)$ be an equivariant representation of $(A,G,\alpha)$ on a countably generated
Hilbert $A$-module $\cl N$, and let $\xi$, $\eta\in\cl N$. Define
$$F(t)(a)=\langle\xi,\rho(a)\tau(t)\eta\rangle_A, \ \ \ t\in G, a\in A.$$
Then $F$ is a Herz-Schur $(A,G,\alpha)$-multiplier.
\end{corollary}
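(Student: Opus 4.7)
The plan is to reduce, via Theorem \ref{th_tr}, to showing that $\cl N(F)$ is a Schur $A$-multiplier, and then to apply Theorem \ref{c_hilmodc} using $\cl N$ itself as the Hilbert $A$-bimodule (with the left action $a\cdot z:=\rho(a)z$).

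First, I would dispose of the preliminaries. Property (4) of an equivariant representation ensures that $t\mapsto\tau(t)\eta$ is norm-continuous, so $t\mapsto F(t)(a)=\langle\xi,\rho(a)\tau(t)\eta\rangle_A$ is continuous for every $a\in A$; in particular $F$ is pointwise measurable and $F(t)\in CB(A)$, since $a\mapsto\rho(a)\tau(t)\eta$ is completely bounded. Property (2) combined with the homomorphism law implies $\|\tau(s)\zeta\|^{2}=\|\alpha_{s}(\langle\zeta,\zeta\rangle_A)\|=\|\zeta\|^{2}$ and $\tau(s)^{-1}=\tau(s^{-1})$, so every $\tau(s)$ is an isometric $\mathbb{C}$-linear bijection on $\cl N$.

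Next, I would define bounded continuous (hence weakly measurable) maps $v,w : G\to\cl N$ by $v(s):=\tau(s^{-1})\eta$ and $w(t):=\tau(t^{-1})\xi$, and then compute $\cl N(F)$ in the desired form. Using property (1) to write $\rho(\alpha_t(a))=\tau(t)\rho(a)\tau(t)^{-1}$ and cancelling $\tau(t)^{-1}\tau(ts^{-1})=\tau(s^{-1})$,
\[
F(ts^{-1})(\alpha_t(a))=\langle\xi,\tau(t)\rho(a)\tau(s^{-1})\eta\rangle_A.
\]
Applying $\alpha_{t^{-1}}$ and using property (2) to move $\alpha_{t^{-1}}$ inside the inner product (via $\tau(t^{-1})$ on both sides, the right $\tau(t^{-1})$ then cancelling against $\tau(t)$), one obtains
\[
\cl N(F)(s,t)(a)=\alpha_{t^{-1}}\bigl(F(ts^{-1})(\alpha_t(a))\bigr)=\langle w(t),\rho(a)\,v(s)\rangle_A,
\]
which is exactly the expression $\langle w(t), a\cdot v(s)\rangle_A$ of Theorem \ref{c_hilmodc}(i).

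Since $\cl N$ is countably generated and $v,w$ are bounded and weakly measurable, Theorem \ref{c_hilmodc}(i)$\Rightarrow$(ii) gives $\cl N(F)\in\frak{S}(G,G;A)$; Theorem \ref{th_tr} then yields $F\in\frak{S}(A,G,\alpha)$, completing the argument. The only step requiring care is the computation in the preceding display: it is not a deep obstruction, because the covariance identities (1)--(2) are tailored precisely for this manipulation, but one has to correctly invoke property (2) with $s=t^{-1}$ to pull $\alpha_{t^{-1}}$ through the inner product and then rely on $\tau(t^{-1})\tau(t)=\id_{\cl N}$ to arrive at $\langle w(t),\rho(a)v(s)\rangle_A$.
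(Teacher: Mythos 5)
Your proposal is correct and follows essentially the same route as the paper's own proof: reduce via Theorem \ref{th_tr} to showing that $\cl N(F)$ is a Schur $A$-multiplier, compute $\cl N(F)(s,t)(a)=\langle\tau(t^{-1})\xi,\rho(a)\tau(s^{-1})\eta\rangle_A$ using the covariance identities, note that $\tau$ is isometric so the maps $s\mapsto\tau(s^{-1})\eta$ and $t\mapsto\tau(t^{-1})\xi$ are bounded (and continuous, hence weakly measurable), and conclude by Theorem \ref{c_hilmodc}. The paper's argument is identical in substance, so there is nothing to add.
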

\begin{proof}
By Theorem \ref{th_tr}, it suffices to show that $\cl N(F)$ is a Schur $A$-multiplier.
We have
\begin{eqnarray*}
\cl N(F)(s,t)(a)&=&\alpha_{t^{-1}}(F(ts^{-1})(\alpha_t(a)))=\alpha_{t^{-1}}(\langle\xi,\rho(\alpha_t(a))\tau(ts^{-1})\eta\rangle_A)\\
&=&\langle\tau(t^{-1})\xi,\tau(t)^{-1}\rho(\alpha_t(a))\tau(t)\tau(s^{-1})\eta\rangle_A\\
&=&
\langle\tau(t^{-1})\xi,\rho(a)\tau(s^{-1})\eta\rangle_A
\end{eqnarray*}
As
$$\|\tau(t)\xi\|^2=\|\langle\tau(t)\xi,\tau(t)\xi\rangle_A\|=\|\alpha_t(\langle\xi,\xi\rangle_A)\|=\|\langle\xi,\xi\rangle_A\|=\|\xi\|^2$$
for all $t\in G$, the statement follows from Theorem \ref{c_hilmodc}.
\end{proof}

We next identify the Schur multipliers of the form $\cl N(F)$ as the \lq\lq invariant''
part of $\frak{S}_0(G,G;A)$.
Let $\rho$ be the right regular representation of $G$ on $L^2(G)$,
{\it i.e.} $(\rho_r\xi)(s)=\Delta(r)^{1/2}\xi(sr)$, $\xi\in L^2(G)$, $s,r\in G$.
Let $\tilde{\alpha}_r = ({\rm Ad}\rho_r)\otimes\alpha_r$,
where, as usual, ${\rm Ad} U$ is the map given by ${\rm Ad} U(T) = UTU^*$;
we have that $\tilde{\alpha}_r$ is a *-automorphism of $\cl K(L^2(G)) \otimes A$
\cite[Theorem 11.2.9]{kr2}.

\begin{definition}\label{d_inhza}
A Schur $A$-multiplier $\nph : G\times G\to CB(A)$ will be called \emph{invariant} if
$S_{\nph}$ commutes with $\tilde{\alpha}_r$ for every $r\in G$.
\end{definition}

We denote by $\frak{S}_{\rm inv}(G,G;A)$ the set of all invariant Schur $A$-multipliers.
If $w$ is a (possibly vector-valued) function defined on $G\times G$,
for $r\in G$, we let $w_r$ be the function given by $w_r(s,t) = w(sr,tr)$.

\begin{lemma}\label{l_alpahr}
If $k\in L^2(G\times G,A)$ then
$\tilde{\alpha}_r(T_k) = T_{\tilde{k}}$,
where $\tilde{k}\in L^2(G\times G,A)$ is given by
$\tilde k(t,s)=\Delta(r)\alpha_r(k_r(t,s))$, $s,t\in G$.
\end{lemma}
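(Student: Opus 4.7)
My plan is to verify the identity first on elementary tensors, where both sides lie transparently in $\cl K(L^2(G)) \otimes A$, and then extend by an $L^2$-density argument of the same flavour as the one used in Proposition \ref{p_indep}. For $k = w \otimes a$ with $w \in L^2(G\times G)$ and $a \in A$, equation (\ref{eq_wta}) gives $T_k = T_w \otimes a \in \cl K(L^2(G))\otimes A$, and by definition of $\tilde\alpha_r$ we have
$$\tilde\alpha_r(T_w \otimes a) = (\rho_r T_w \rho_r^*) \otimes \alpha_r(a).$$
Using $\rho_r^* = \rho_{r^{-1}}$ together with the change of variable $s \mapsto ur$ (so that $ds = \Delta(r)\,du$) in the integral defining $T_w\rho_r^*\xi$, a short calculation shows that $\rho_r T_w \rho_r^*$ is the integral operator with kernel $(t,u) \mapsto \Delta(r)\, w(tr, ur) = \Delta(r)\, w_r(t,u)$. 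Hence $\tilde\alpha_r(T_w\otimes a) = T_{\tilde k}$ with $\tilde k(t,u) = \Delta(r)\alpha_r(k_r(t,u))$, as claimed; by linearity the formula then holds for every $k \in L^2(G\times G)\odot A$.

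To pass to a general $k \in L^2(G\times G, A)$, I would approximate as in the proof of Proposition \ref{p_indep}: by \cite[Proposition 7.4]{takesaki1}, choose $(k_n)_{n\in\bb N} \subseteq L^2(G\times G)\odot A$ with $\|k_n - k\|_2 \to 0$. A routine change of variable ($t \mapsto tr^{-1}$, $s \mapsto sr^{-1}$, which contributes a factor $\Delta(r)^{-2}$) combined with the fact that $\alpha_r$ is isometric on $A$ shows that $k \mapsto \tilde k$ is an $L^2$-isometry; in particular $\|\tilde{k_n} - \tilde k\|_2 \to 0$. By Lemma \ref{l_Tk} applied twice, $T_{k_n} \to T_k$ and $T_{\tilde{k_n}} \to T_{\tilde k}$ in operator norm, and since $\tilde\alpha_r$ is an isometric $*$-automorphism of $\cl K(L^2(G))\otimes A$ we also have $\tilde\alpha_r(T_{k_n}) = T_{\tilde{k_n}} \to \tilde\alpha_r(T_k)$. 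Putting these limits together yields the desired identity $\tilde\alpha_r(T_k) = T_{\tilde k}$.

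There is no real obstacle here; the entire lemma comes down to keeping the modular function $\Delta$ straight in the two change-of-variable computations (the one producing the kernel of $\rho_r T_w \rho_r^*$ and the one verifying that $k \mapsto \tilde k$ preserves the $L^2$-norm). Everything else is a direct application of Lemma \ref{l_Tk} and the density of $L^2(G\times G)\odot A$ in $L^2(G\times G,A)$.
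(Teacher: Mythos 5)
Your proof is correct and follows essentially the same route as the paper's: verify the identity on elementary tensors by computing the kernel of $\rho_r T_w \rho_r^*$ (keeping track of $\Delta$), note that $k\mapsto\tilde k$ is an $L^2$-isometry, and pass to general $k$ by density via Lemma \ref{l_Tk} and the norm-continuity of $\tilde\alpha_r$. The modular-function bookkeeping in both change-of-variable computations matches the paper's.
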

\begin{proof}
First note that
if $k\in L^2(G\times G,A)$ then $\tilde{k}\in L^2(G\times G,A)$ and
\begin{equation}\label{eq_kktilde}
\|\tilde{k}\|_2 = \|k\|_2;
\end{equation}
indeed,
$$\|\tilde k\|_2 = \int\Delta^2(r)\|\alpha_r(k(tr,sr))\|^2dtds=
\int\|k(t',s')\|^2dt'ds'.$$
Let
$$\Theta, \Theta' : L^2(G\times G,A) \to \cl B(L^2(G,H))$$
be the maps defined by
$$\Theta(k) = T_{\tilde{k}}, \ \ \ \  \Theta'(k) = \tilde{\alpha}_r(T_k).$$
By Lemma \ref{l_Tk} and (\ref{eq_kktilde}), $\Theta$ and $\Theta'$ are continuous.

Suppose that $k \in L^2(G\times G,A)$ is given by $k = h\otimes a$, where
$h\in L^2(G\times G)$ and $a\in A$.
As $\rho_r^*\xi(s)=\Delta(r)^{-1/2}\xi(sr^{-1})$, $\xi\in L^2(G)$, $s\in G$,
we have
\begin{eqnarray*}
(\rho_rT_h\rho_r^*\xi)(s)
& = & \Delta(r)^{1/2}(T_h\rho_r^*)\xi(sr)=\Delta(r)^{1/2}\int h(sr,x)(\rho_r^*\xi)(x)dx\\
& = &\int h(sr,x)\xi(xr^{-1})dx=\int h(sr,yr)\xi(y)\Delta(r)dy,
\end{eqnarray*}
that is, $\rho_r T_h \rho_r^* = T_{\tilde{h}_r}$,
where $\tilde{h}_r(t,s) = \Delta(r)h(tr,sr)$, $s,t\in G$.
Thus,
$$\tilde{\alpha}_r(T_k) = \tilde{\alpha}_r(T_h\otimes a) = T_{\tilde{h}_r}\otimes \alpha_r(a),$$
and so $\Theta'(k) = \Theta(k)$.
As $h$ and $a$ vary, the functions $k$ span a dense subspace of $L^2(G\times G,A)$,
and hence $\Theta = \Theta'$ by continuity.
\end{proof}

In the proof of Theorem \ref{th_invpart} below, we will need the following improvement
of \cite[Lemma 3.9]{tt_p}.

\begin{lemma}\label{l_mt}
Let $\cl X$ be a separable Banach space and
$w : G\times G\to \cl B(\cl X)$ be a bounded function, such that, for every
$a\in \cl X$, the function $(s,t)\to w(s,t)(a)$ is weakly measurable, and
$w_r = w$ almost everywhere, for every $r\in G$.
Then there exists a bounded function $u : G\to \cl B(\cl  X)$ such that,
for every $a\in \cl X$, the function $s\to u(s)(a)$ is measurable and,
up to a null set, $w = N(u)$.
\end{lemma}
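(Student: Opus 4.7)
My plan is to reduce the $\cl B(\cl X)$-valued problem to countably many $\cl X$-valued problems using the separability of $\cl X$, and then apply a Fubini-based change-of-variables argument to each.

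First, I would fix a countable $\bb{Q}[i]$-linear dense subset $\{a_n\}_{n\in\bb{N}}\subseteq \cl X$. Since each function $v_n:=w(\cdot,\cdot)a_n$ is weakly measurable into a separable Banach space, Pettis's measurability theorem yields that $v_n$ is strongly (Bochner) measurable, with $\|v_n(s,t)\|\leq M\|a_n\|$, where $M:=\sup_{s,t}\|w(s,t)\|_{\cl B(\cl X)}$.

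Next, I would perform the change of coordinates $\Psi:G\times G\to G\times G$, $\Psi(s,t)=(s,ts^{-1})$, which is a Borel isomorphism preserving null sets. In the new coordinates the diagonal right action $(s,t)\mapsto(sr,tr)$ acts only on the first variable, as right translation by $r$. Letting $\tilde v_n(s,r'):=w(s,r's)a_n$, the hypothesis ``$w_r=w$ a.e.\ for every $r$'' translates into: for every $r\in G$, $\tilde v_n(sr,r')=\tilde v_n(s,r')$ for almost all $(s,r')$. The key Fubini step is to consider
\[
E_n:=\{(s,r',r)\in G^3:\tilde v_n(sr,r')\neq\tilde v_n(s,r')\}.
\]
Each $r$-slice of $E_n$ is null, so Tonelli's theorem forces $E_n$ itself to be null, and a second application of Tonelli gives that for a.e.\ $(s_0,r')$ the slice $\{r:(s_0,r',r)\in E_n\}$ is null. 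Hence, for a.e.\ $r'\in G$, the function $s\mapsto\tilde v_n(s,r')$ is essentially right-translation invariant, thus essentially constant. Fixing a compact $K\subseteq G$ with $m(K)>0$ and setting
\[
u_n(r'):=\frac{1}{m(K)}\int_K\tilde v_n(s,r')\,ds
\]
(a Bochner integral, well-defined by strong measurability and boundedness), one obtains a measurable $u_n:G\to\cl X$ with $\|u_n(r')\|\leq M\|a_n\|$ and $w(s,t)a_n=u_n(ts^{-1})$ almost everywhere.

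Finally, I would assemble the $u_n$'s into $u:G\to\cl B(\cl X)$. Because $\{a_n\}$ is $\bb{Q}[i]$-linear, each linearity relation $u_\ell=\alpha u_n+\beta u_m$ (whenever $a_\ell=\alpha a_n+\beta a_m$) and each norm bound $\|u_n(r)\|\leq M\|a_n\|$ holds off a null set; the countable union $N_0$ of these null sets is null. Off $N_0$, the assignment $a_n\mapsto u_n(r)$ is $\bb{Q}[i]$-linear and bounded by $M$, so it extends uniquely to $u(r)\in\cl B(\cl X)$ with $\|u(r)\|\leq M$; set $u(r):=0$ on $N_0$. For arbitrary $a\in\cl X$, an approximation $a_{n(k)}\to a$ gives $u(\cdot)a=\lim_k u_{n(k)}(\cdot)$ uniformly off $N_0$, so $r\mapsto u(r)a$ is measurable as a pointwise limit of measurable functions. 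The identities $w(s,t)a_n=u(ts^{-1})a_n$ for each $n$ hold on a common conull set in $G\times G$, and by density of $\{a_n\}$ together with the uniform bounds on $w(s,t)$ and $u(ts^{-1})$ one concludes $w=N(u)$ almost everywhere. The main obstacle will be the Fubini step, where per-$r$ a.e.\ invariance must be upgraded, via two applications of Tonelli, to essential translation-invariance in $s$ for almost every $r'$; separability of $\cl X$ is essential both to invoke Pettis at the start and to define the Bochner averages producing the measurable representative $u_n$. The remaining work is bookkeeping with countably many null sets in the gluing step.
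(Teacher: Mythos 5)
Your proof is correct and follows essentially the same route as the paper's: the change of variables $(s,t)\mapsto(s,ts^{-1})$ followed by a double application of Tonelli over the auxiliary variable $r$ is exactly the paper's argument with $\phi(y,x)=(y,xy)$. The only real difference is in the final assembly: where you define each $u_n$ by a Bochner average over a compact set and then glue the $u_n$ into an operator-valued $u$ via $\bb{Q}[i]$-linearity and density, the paper takes the union of the null sets over a countable dense subset of $\cl X$, uses boundedness to get the operator identity $w(sr,xsr)=w(s,xs)$ off a single null set in $G^3$, picks one good $s_0$, and sets $u(x)=w(s_0,xs_0)$ directly, which shortens that last step.
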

\begin{proof}
The map $\phi : G\times G\to G\times G$, given by $\phi(y,x) = (y,xy)$,
is continuous (and hence measurable) and bijective,
and Fubini's Theorem shows that it preserves null sets in both directions.
By assumption, for all $r\in G$, we have that
$w_r(s,x) = w(s,x)$ for almost all $(s,x)\in G\times G$. Thus,
$w_r(\phi(s,x)) = w(\phi(s,x))$ for almost all $(s,x)\in G\times G$, that is,
$w(sr,xsr) = w(s,xs)$ for almost all $(x,s)\in G\times G$.
We claim that $w(sr,xsr) = w(s,xs)$ for almost all $(x,s,r)\in G\times G\times G$.
In fact, let $\cl S\subseteq \cl X$ be a countable dense subset. For every
$a\in \cl S$, we have
\begin{eqnarray*}
& &\int_{G\times G\times G} \|w(sr,xsr)(a) - w(s,xs)(a)\|dxdsdr\\
& = &\int_G\left(\int_{G\times G} \|w(sr,xsr)(a) - w(s,xs)(a)\|dxds\right)dr = 0.
\end{eqnarray*}
Thus, there exists a null set $N_a\subseteq G\times G \times G$ such that
$w(sr,xsr)(a) = w(s,xs)(a)$ for all $(x,s,r)\not\in N_a$.
Let $N = \cup_{a\in \cl S} N_a$. Then
$w(sr,xsr)(a) = w(s,xs)(a)$ for all $(x,s,r)\not\in N$ and all $a\in \cl S$.
Since $w(sr,xsr)$ and $w(s,xs)$ are bounded operators on $\cl X$,
we conclude that $w(sr,xsr) = w(s,xs)$ for all $(x,s,r)\not\in N$.
Thus, there exists $s_0\in G$ such that
\begin{equation}\label{eq_s0}
w(s_0r, xs_0r) = w(s_0,xs_0), \mbox{ for almost all } (x,r) \in G\times G.
\end{equation}
For each $x\in G$, let $u(x) = w(s_0,xs_0)$.
Clearly, $u : G\to \cl B(\cl X)$ is a bounded function such that,
for every $a\in \cl X$, the function $x\to u(x)(a)$ is weakly measurable.
Now (\ref{eq_s0}) implies that
$w(y,xy) = u(x)$ for almost all $(x,y)\in G\times G$.
Letting $\tilde{u} : G\times G\to \cl X$ be the map given by $\tilde{u}(s,t) = u(t)$,
we thus have that $w(y,xy) = \tilde{u}(y,x)$ for almost all $(x,y)\in G\times G$.
It follows that $w(\phi^{-1}(y,xy)) = \tilde{u}(\phi^{-1}(y,x))$ for almost all $(x,y)\in G\times G$,
that is, $w(y,x) = u(xy^{-1})$ for almost all $(x,y)\in G\times G$.
The proof is complete.
\end{proof}

\begin{lemma}\label{l_clT}
Let $\nph\in \frak{S}_0(G,G;A)$. The following are equivalent:

(i) \ $\nph$ is an invariant Schur $A$-multiplier;

(ii) $\cl T(\nph)_r = \cl T(\nph)$ almost everywhere, for every $r\in G$.
\end{lemma}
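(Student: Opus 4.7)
My plan is to translate the invariance condition $\tilde\alpha_r\circ S_\nph = S_\nph\circ \tilde\alpha_r$ into a pointwise identity on $\nph$ by working at the kernel level, and then verify that this pointwise identity is precisely $\cl T(\nph)_r=\cl T(\nph)$ a.e. Concretely, for $k\in L^2(G\times G,A)$ Lemma \ref{l_alpahr} gives $\tilde\alpha_r(S_\nph(T_k))=T_{\widetilde{\nph\cdot k}}$ and $S_\nph(\tilde\alpha_r(T_k))=T_{\nph\cdot\tilde k}$, where tilde denotes the substitution $g(t,s)\mapsto \Delta(r)\alpha_r(g(tr,sr))$. By Lemma \ref{l_Tk}, the invariance of $S_\nph$ under $\tilde\alpha_r$ is equivalent to $\widetilde{\nph\cdot k}=\nph\cdot\tilde k$ almost everywhere, for every $k\in L^2(G\times G,A)$, i.e.
\[
\alpha_r\bigl(\nph(sr,tr)(k(tr,sr))\bigr)=\nph(s,t)\bigl(\alpha_r(k(tr,sr))\bigr)\quad\text{a.e.}
\]

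Next I would reduce this to a pointwise identity on $\nph$ alone. Specialising to $k=g\otimes a$ with $g\in L^2(G\times G)$ and cancelling $g$ on its (arbitrary) support, letting $g$ range over a countable dense family in $L^2(G\times G)$, and using the separability of $A$ to take a countable dense subset of $A$, I obtain a single null set $N_r\subseteq G\times G$ (depending only on $r$) such that for $(s,t)\notin N_r$ and every $a\in A$
\[
\alpha_r\bigl(\nph(sr,tr)(\alpha_{r^{-1}}(a))\bigr)=\nph(s,t)(a),
\]
after the substitution $a\mapsto \alpha_{r^{-1}}(a)$. This pointwise identity is the key bridge.

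A direct algebraic calculation, applying $\alpha_{t^{-1}}$ to $\cl T(\nph)(sr,tr)(a)=\alpha_{tr}(\nph(sr,tr)(\alpha_{r^{-1}t^{-1}}(a)))$ and substituting $b=\alpha_{t^{-1}}(a)$, shows that $\cl T(\nph)_r(s,t)=\cl T(\nph)(s,t)$ as elements of $CB(A)$ is equivalent to $\alpha_r(\nph(sr,tr)(\alpha_{r^{-1}}(b)))=\nph(s,t)(b)$ for every $b\in A$. Thus the pointwise identity above is precisely the statement that $\cl T(\nph)_r=\cl T(\nph)$ off $N_r$, yielding (i)$\Rightarrow$(ii).

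For the converse I would run the argument in reverse: the pointwise identity off $N_r$, valid for every $a\in A$, can be evaluated at $a=\alpha_r(k(tr,sr))$ pointwise in $(s,t)$ to recover $\widetilde{\nph\cdot k}=\nph\cdot\tilde k$ a.e.; Lemma \ref{l_Tk} then gives $\tilde\alpha_r(S_\nph(T_k))=S_\nph(\tilde\alpha_r(T_k))$ for all $k\in L^2(G\times G,A)$, and density of $\{T_k\}$ in $\cl K\otimes A$ together with norm continuity of $S_\nph$ and $\tilde\alpha_r$ finishes the argument. The main technical obstacle is making sure that the null sets arising from the density/Fubini manipulations can be chosen uniformly in $a\in A$; this is exactly where separability of $A$ together with the pointwise measurability built into the definition of $\frak{S}_0(G,G;A)$ is essential.
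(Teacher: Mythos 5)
Your proposal is correct and follows essentially the same route as the paper: both reduce invariance to the kernel identity $\nph(sr,tr)(a)=\alpha_{r^{-1}}(\nph(s,t)(\alpha_r(a)))$ a.e.\ via Lemmas \ref{l_alpahr} and \ref{l_Tk} applied to elementary tensors, translate this into $\cl T(\nph)_r=\cl T(\nph)$ by the same algebraic substitution, and obtain the converse by reversing the steps and using density of the span of the $T_k\otimes a$. Your extra care in choosing the null set uniformly over a countable dense set of kernels and of elements of $A$ is a point the paper glosses over, but it is the same argument.
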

\begin{proof}
Assume that $A$ is faithfully represented on a separable Hilbert space.

(i)$\Rightarrow$(ii)
Let $r\in G$, $a\in A$ and $k\in L^2(G\times G)$.
By Lemma \ref{l_alpahr},
$(S_{\nph}\circ \tilde{\alpha}_r)(T_k\otimes a) = T_{k_1}$,
where $k_1 : G\times G \to A$ is the function given by
$$k_1(t,s) =  \Delta(r)k(tr,sr)\nph(s,t)(\alpha_r(a)), \ \ \ s,t\in G,$$
and
$(\tilde{\alpha}_r\circ S_{\nph})(T_k\otimes a) = T_{k_2},$
where $k_2 : G\times G \to A$ is the function given by
$$k_2(t,s) =  \Delta(r)k(tr,sr)\alpha_r(\nph(sr,tr)(a)), \ \ \ s,t\in G.$$
By Lemma \ref{l_Tk}, $k_1 = k_2$ almost everywhere, and hence
$$\nph(sr,tr)(a) = \alpha_{r^{-1}}(\nph(s,t)(\alpha_r(a))),$$
for almost all $(s,t)\in G\times G$.
Thus, for every $a\in A$,
\begin{eqnarray*}
\cl T(\nph)(sr,tr)(a) & = & \alpha_{tr}(\nph(sr,tr)(\alpha_{r^{-1}t^{-1}}(a)))\\
& = & \alpha_{tr}(\alpha_{r^{-1}}(\nph(s,t)(\alpha_r(\alpha_{r^{-1}t^{-1}}(a))))\\
& = & \alpha_{t}(\nph(s,t)(\alpha_{t^{-1}}(a))) = \cl T(\nph)(s,t)(a)
\end{eqnarray*}
for almost all $(s,t)\in G\times G$. Since $A$ is separable, we conclude that
$\cl T(\nph)(sr,tr) = \cl T(\nph)(s,t)$ for almost all $(s,t)\in G\times G$.

(ii)$\Rightarrow$(i) follows by reversing the steps in the previous paragraph
and using the density in $\cl K(L^2(G))\otimes A$ of the linear span of the operators of the form
$T_k\otimes a$, with $k\in L^2(G\times G)$ and $a\in A$.
\end{proof}

\begin{theorem}\label{th_invpart}
The map $\cl N$ is a linear isometry from $\frak{S}(A,G,\alpha)$ onto
$\frak{S}_{\rm inv}(G,G;A)$.
\end{theorem}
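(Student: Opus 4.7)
The plan is to piggyback on Theorem \ref{th_tr}, which already establishes that $\cl N$ is a well-defined isometric linear embedding of $\frak{S}(A,G,\alpha)$ into $\frak{S}(G,G;A)$; since $F$ takes values in $CB(A)$, so does $\cl N(F)$, and hence the image actually lies in $\frak{S}_0(G,G;A)$. Linearity of $\cl N$ is immediate from the linearity of the assignments $F\mapsto N(F)$ and $\cl T^{-1}$. Thus only two things need to be verified: that $\cl N(F)\in\frak{S}_{\rm inv}(G,G;A)$ for every $F\in\frak{S}(A,G,\alpha)$, and that every element of $\frak{S}_{\rm inv}(G,G;A)$ arises in this way.

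For the first task, I would observe directly from the definition that, for every $r\in G$,
$$N(F)(sr,tr) = F\bigl(tr(sr)^{-1}\bigr) = F(ts^{-1}) = N(F)(s,t).$$
Since $\cl T(\cl N(F))=N(F)$, the invariance $\cl T(\cl N(F))_r=\cl T(\cl N(F))$ holds everywhere, and Lemma \ref{l_clT} yields $\cl N(F)\in\frak{S}_{\rm inv}(G,G;A)$.

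For surjectivity, starting from an arbitrary $\nph\in\frak{S}_{\rm inv}(G,G;A)$, Lemma \ref{l_clT} gives $w:=\cl T(\nph):G\times G\to CB(A)$ with $w_r=w$ almost everywhere for every $r\in G$. Applying Lemma \ref{l_mt} with the separable Banach space $\cl X=A$ yields a bounded function $u:G\to\cl B(A)$ such that $w=N(u)$ almost everywhere and $s\mapsto u(s)(a)$ is measurable for each $a\in A$. The explicit construction in that lemma, $u(x)=w(s_0,xs_0)$ for a suitable $s_0\in G$, forces $u$ to take values in $CB(A)$, while the measurability just extracted is precisely pointwise measurability in the sense of Definition \ref{d_hsmds}. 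It then follows that
$$\cl N(u) = \cl T^{-1}(N(u)) = \cl T^{-1}(w) = \nph \quad\text{almost everywhere,}$$
so $\cl N(u)$ coincides with the Schur $A$-multiplier $\nph$ up to a null set, and is therefore itself a Schur $A$-multiplier of norm $\|\nph\|_{\frak{S}}$. Theorem \ref{th_tr} (ii)$\Rightarrow$(i) then forces $u\in\frak{S}(A,G,\alpha)$, with $\|u\|_{\mm}=\|\cl N(u)\|_{\frak{S}}=\|\nph\|_{\frak{S}}$, completing both surjectivity and the isometry claim.

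The main obstacle is the surjectivity direction: one needs to convert the diagonal invariance $w_r=w$ almost everywhere on $G\times G$ into the existence of a bona fide one-variable function $u$ with the correct range and measurability. This is exactly the content of Lemma \ref{l_mt}, so the step becomes essentially mechanical once that lemma is invoked; the only care required is in tracking that $u$ inherits values in $CB(A)$ and the pointwise measurability needed to feed $u$ back into Theorem \ref{th_tr}.
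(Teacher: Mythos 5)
Your proposal is correct and follows essentially the same route as the paper: Theorem \ref{th_tr} for the isometric embedding, Lemma \ref{l_clT} to identify the image inside $\frak{S}_{\rm inv}(G,G;A)$, and Lemmas \ref{l_clT} and \ref{l_mt} to produce the one-variable function $F$ for surjectivity, feeding it back through Theorem \ref{th_tr}. The points you flag as needing care (that $u$ lands in $CB(A)$ and is pointwise measurable) are exactly the ones the paper checks.
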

\begin{proof}
By Theorem \ref{th_tr}, the map
$\cl N$ is a linear isometry from $\frak{S}(A,G,\alpha)$ into $\frak{S}_0(G,G;A)$.
By the definition of $\cl N$, we have that
$\cl T(\cl N(F))_r =\cl T(\cl N(F))$ almost everywhere for every $r\in G$ and every $F\in \frak{S}(A,G,\alpha)$.
By Lemma \ref{l_clT}, the image of $\cl N$ is in $\frak{S}_{\rm inv}(G,G;A)$.

It remains to show that $\cl N$ is surjective. To this end, let
$\theta$ be a faithful *-representation of $A$ on a separable Hilbert space $K$;
we identify $A$ with its image $\theta(A)$ under $\theta$
and let
$\nph\in \frak{S}_{\rm inv}(G,G;A)$.
By Lemmas \ref{l_clT} and \ref{l_mt},
there exists a bounded function $F : G\to \cl B(A)$ such that
$N(F) = \cl T(\nph)$ almost everywhere and such that, for every
$a\in A$, the function $s\to F(s)(a)$, is weakly measurable.
It follows that $\cl N(F) = \nph$ almost everywhere.
Since $\nph(x,y)$ is completely bounded for all $(x,y)$, we
have that $F(s)\in CB(A)$ for all $s\in G$. As $\nph$ is a Schur $A$-multiplier,
it follows from the proof of Theorem \ref{th_tr} that the map
$(\pi^\theta\rtimes\lambda^\theta)(f) \to (\pi^\theta\rtimes\lambda^\theta)(F\cdot f)$
on $(\pi^\theta\rtimes\lambda^\theta)(L^1(G,A))$
is completely bounded.
By Remark \ref{herz_schur_ind} (iii), $F\in\frak{S}(A,G,\alpha)$.
\end{proof}

We now consider bounded, as opposed to completely bounded, multipliers,
and use them to characterise Herz-Schur $\theta$-multipliers in the spirit of
\cite{ch} in Proposition \ref{p_bcb} below.
Let $\Gamma$ be a a locally compact group.
For a function $F : G\rightarrow CB(A)$, let
$F^{\Gamma} : \Gamma\times G\to CB(A)$ be given by $F^\Gamma(x,s) = F(s)$, $x\in \Gamma$, $s\in G$.
If $\alpha$ is an action of $G$ on $A$, we let $\alpha^\Gamma :  \Gamma\times G\to {\rm Aut}(A)$
be given by $\alpha^\Gamma_{(x,s)}(a) = \alpha_s(a)$, $a\in A$, $x\in \Gamma$, $s\in G$.
We have the following characterisation of Herz-Schur $\theta$-multipliers, similar to
\cite[Theorem 1.6]{ch}.

\begin{proposition}\label{p_bcb}
Let $F : G\rightarrow CB(A)$
and $\theta : A\to \cl B(K)$ be a faithful *-representation.
The following are equivalent:

(i) \ \ $F$ is a Herz-Schur $\theta$-multiplier;

(ii) \ for each locally compact group $\Gamma$, the function $F^\Gamma$
is a $\theta$-multiplier;

(iii) for $\Gamma=SU(2)$, the function $F^{\Gamma}$ is a $\theta$-multiplier.
\end{proposition}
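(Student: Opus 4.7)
The plan is to mimic the classical scheme of de Cannière-Haagerup \cite[Theorem 1.6]{ch}, replacing the Fourier algebra side by the corresponding crossed product construction. The central observation is that, because $\alpha^\Gamma_{(x,s)} = \alpha_s$, the group $\Gamma$ acts trivially; this forces the identification
\[
A\rtimes_{\alpha^\Gamma,\theta}^{w^*}(\Gamma\times G) \;=\; \vn(\Gamma)\,\bar\otimes\,\bigl(A\rtimes_{\alpha,\theta}^{w^*} G\bigr),
\]
via $\lambda^\theta_{(x,t)} = \lambda^\Gamma_x\otimes\lambda^\theta_t$ and $\pi^\theta(a) = I_{L^2(\Gamma)}\otimes\pi^\theta(a)$ on $L^2(\Gamma\times G,K)\cong L^2(\Gamma)\otimes L^2(G,K)$. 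Both sides are weak* closures of the algebra generated by the operators $\lambda^\Gamma_x\otimes\pi^\theta(a)\lambda^\theta_t$, so the identification is immediate. Under it, the defining relation of $\Phi^\theta_{F^\Gamma}$ on generators reads
\[
\Phi^\theta_{F^\Gamma}\bigl(\lambda^\Gamma_x\otimes\pi^\theta(a)\lambda^\theta_t\bigr)=\lambda^\Gamma_x\otimes\pi^\theta(F(t)(a))\lambda^\theta_t,
\]
i.e.\ $\Phi^\theta_{F^\Gamma}$, whenever it exists as a bounded weak* continuous map, agrees on generators (and hence on the whole von Neumann algebra by weak* density and linearity) with $\mathrm{id}_{\vn(\Gamma)}\bar\otimes\Phi^\theta_F$.

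For (i)$\Rightarrow$(ii): if $F$ is a Herz-Schur $\theta$-multiplier, then $\Phi^\theta_F$ is completely bounded and weak* continuous on $A\rtimes_{\alpha,\theta}^{w^*}G$. Standard tensor product theory guarantees that $\mathrm{id}_{\vn(\Gamma)}\bar\otimes\Phi^\theta_F$ is a bounded weak* continuous map on the tensor product, and its action on generators is exactly what is required to witness $F^\Gamma$ as a $\theta$-multiplier. The implication (ii)$\Rightarrow$(iii) is of course trivial.

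The substance of the proof is (iii)$\Rightarrow$(i). Here the special role of $SU(2)$ enters via its Peter-Weyl decomposition: since $SU(2)$ is compact and has exactly one irreducible unitary representation $\pi_n$ of each dimension $n\ge 1$, one has a von Neumann algebra direct sum decomposition
\[
\vn(SU(2)) \;\cong\; \bigoplus_{n\ge 1} M_n,
\]
with pairwise orthogonal minimal central projections $e_n$ such that $e_n\vn(SU(2))\cong M_n$. Granting the hypothesis, $\Phi^\theta_{F^{SU(2)}}$ is a bounded weak* continuous map on $\vn(SU(2))\bar\otimes(A\rtimes_{\alpha,\theta}^{w^*}G)$, and the computation $\lambda^\Gamma_y\lambda^\Gamma_x = \lambda^\Gamma_{yx}$ combined with weak* continuity shows that $\Phi^\theta_{F^{SU(2)}}$ is a two-sided $\vn(SU(2))\otimes 1$-module map. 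Consequently it commutes with multiplication by each central projection $e_n\otimes 1$, and hence restricts to a bounded map
\[
\Psi_n : M_n\,\bar\otimes\,\bigl(A\rtimes_{\alpha,\theta}^{w^*}G\bigr) \to M_n\,\bar\otimes\,\bigl(A\rtimes_{\alpha,\theta}^{w^*}G\bigr),
\]
of norm at most $\|\Phi^\theta_{F^{SU(2)}}\|$. The restriction $\Psi_1$ produces, through the trivial one-dimensional block, a bounded weak* continuous extension of $\Phi^\theta_F$ to $A\rtimes_{\alpha,\theta}^{w^*}G$, so $F$ is at least a $\theta$-multiplier. Then one verifies, by compressing with matrix units $E_{ij}\otimes 1$ (using the module property again), that $\Psi_n = \mathrm{id}_{M_n}\otimes\Phi^\theta_F$, whence $\|\mathrm{id}_{M_n}\otimes\Phi^\theta_F\|\le \|\Phi^\theta_{F^{SU(2)}}\|$ uniformly in $n$; this yields complete boundedness of $\Phi^\theta_F$, and so $F$ is a Herz-Schur $\theta$-multiplier.

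The only delicate point I foresee is the identification of the weak* closed crossed product for the product group with the weak* spatial tensor product. One must be careful that the covariant pair $(\pi^\theta,\lambda^\theta)$ for the system $(A,\Gamma\times G,\alpha^\Gamma)$ really does factor through $L^2(\Gamma)\otimes L^2(G,K)$ in the displayed way, and that both the $C^*$- and the von Neumann level agree with $\vn(\Gamma)\,\bar\otimes\,(A\rtimes_{\alpha,\theta}^{w^*}G)$; once this is in place, the rest of the argument is essentially bookkeeping with the module property over $\vn(\Gamma)\otimes 1$ and the Peter-Weyl decomposition of $\vn(SU(2))$.
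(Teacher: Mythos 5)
Your proof is correct and takes essentially the same route as the paper: the identification $A\rtimes_{\alpha^\Gamma,\theta}^{w^*}(\Gamma\times G)=\vn(\Gamma)\bar\otimes(A\rtimes_{\alpha,\theta}^{w^*}G)$ together with $\id\otimes\Phi_F^\theta$ for (i)$\Rightarrow$(ii), and the decomposition $\vn(SU(2))\cong\oplus_{n}M_n$ yielding the uniform bounds $\|\id_{M_n}\otimes\Phi_F^\theta\|\leq\|\Phi_{F^\Gamma}^\theta\|$ for (iii)$\Rightarrow$(i). Your $\vn(SU(2))\otimes 1$-bimodule argument for identifying the restriction to each $M_n$-block with $\id_{M_n}\otimes\Phi_F^\theta$ merely makes explicit a step the paper leaves implicit.
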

\begin{proof}
Let $\pi^{\Gamma,\theta} : A\to \cl B(L^2(\Gamma\times G,K))$ be the *-representation
given by
$\pi^{\Gamma,\theta}(a)\xi(x,t) = \alpha^{\Gamma}_{(x^{-1},t^{-1})}(a)(\xi(x,t))$.
Identifying the Hilbert space $L^2(\Gamma\times G,K)$ with $L^2(\Gamma)\otimes L^2(G,K)$
in the natural way, we see that
$$\pi^{\Gamma,\theta}(a) = I_{L^2(\Gamma)}\otimes \pi^{\theta}(a), \ \ \ \ a\in A.$$
On the other hand,
$$\lambda_{(x,t)}^\theta = \lambda_x^{\Gamma}\otimes \lambda_t^{\theta}, \ \ \ \ x\in \Gamma, t\in G.$$

Suppose that $f\in L^1(\Gamma\times G,A)$ has the form
$f(x,s) = g(x) h(s)$, where $f \in L^1(\Gamma)$ and $h\in L^1(G,A)$.
We have
\begin{eqnarray}\label{eq_tenso}
\int_{\Gamma\times G} \pi^{\Gamma,\theta}(f(x,s))\lambda_{(x,s)}^{\theta} dx ds
& = &
\int_{\Gamma\times G} (g(x)I_{L^2(\Gamma)}\otimes \pi^\theta(f(s)))(\lambda_x^{\Gamma}\otimes \lambda_{s}^{\theta}) dx ds \nonumber\\
& = &
\left(\int_{\Gamma} g(x)\lambda_x^{\Gamma} dx\right) \otimes
\left(\int_G \pi^\theta(f(s)) \lambda_{s}^{\theta}ds\right).
\end{eqnarray}
Thus,
\begin{equation}\label{eq_gammar}
A\rtimes_{\alpha^{\Gamma},\theta}^{w^*} (\Gamma\times G)
= \vn(\Gamma)\bar{\otimes} (A\rtimes_{\alpha,\theta}^{w^*} G).
\end{equation}

(i)$\Rightarrow$(ii) Suppose that $F : G\to CB(A)$ is a Herz-Schur $\theta$-multiplier.
Since
the map $\Phi_F^{\theta}$ on $A\rtimes_{\alpha,\theta}^{w^*} G$,
given by
$\Phi_F^{\theta}(\pi^\theta(a)\lambda_t^\theta) = \pi^\theta(F(t)(a))\lambda_t^\theta$,
is completely bounded and weak* continuous,
$\id\otimes \Phi_F^{\theta}$ is a (completely) bounded map on
${\rm VN}(\Gamma)\bar{\otimes} (A\rtimes_{\alpha,\theta}^{w^*} G)$
(see {\it e.g.} \cite[Lemma 1.5]{ch}). Moreover,
\begin{equation}\label{eq_fgamma}
\pi^{\theta,\Gamma}(F^\Gamma(a))\lambda_{(x,s)}^{\theta} =(\id\otimes\Phi_F^\theta)(\pi^{\theta,\Gamma}(a)\lambda_{(x,s)}^{\theta}).
\end{equation}
It follows that the map $\Phi_{F^{\Gamma}}^{\theta}: \pi^{\theta,\Gamma}(a)\lambda_{(x,s)}^{\theta}\to \pi^{\theta,\Gamma}(F^\Gamma(a))\lambda_{(x,s)}^{\theta}$ extends to a bounded weak* continuous map
on $A\rtimes_{\alpha^\Gamma,\theta}^{w^*}(\Gamma\times G)$; in other words,
$F^{\Gamma}$ is a $\theta$-multiplier.

(ii)$\Rightarrow$(iii) is trivial.

(iii)$\Rightarrow$(i) We have that $\vn(SU(2)) \equiv \oplus_{n\in\bb{N}} M_n$, where $M_n$
is the $n$ by $n$ matrix algebra.
Hence
$$\vn(SU(2))\bar\otimes  (A\rtimes_{\alpha,\theta}^{w^*} G)
\equiv \oplus_{n=1}^\infty \left(M_n\otimes (A\rtimes_{\alpha,\theta}^{w^*}G)\right).$$
As $\Phi_{F^\Gamma}^{\theta}$ is a bounded weak* continuous map on
$\vn(SU(2))\bar\otimes  (A\rtimes_{\alpha,\theta}^{w^*} G)$,
equations (\ref{eq_gammar}) and (\ref{eq_fgamma}) now imply that
$\|\id_{M_n}\otimes \Phi_F^{\theta}\|\leq\|\Phi_{F^\Gamma}^{\theta}\|$ for all $n$ and hence
$\Phi_F^{\theta}$ is completely bounded.
\end{proof}

\section{Multipliers of the weak* crossed product}\label{s_mwscp}

In this section, we consider the weak* extendable Herz-Schur multipliers introduced in
Definition \ref{d_wsthe},
and characterise them
as the commutator of the \lq\lq scalar valued'' multipliers described in Proposition \ref{p_cbcl} below.
We fix, throughout the section, a $C^*$-dynamical system $(A,G,\alpha)$.
As before, $A$ is assumed to be separable, while $G$ is assumed to be second countable.
If $\theta : A\to \cl B(K)$ is a faithful *-representation, where $K$ is a separable Hilbert space,
we let $\alpha^{\theta} : G\to {\rm Aut}(\theta(A))$ be given by
$\alpha^{\theta}_t(\theta(a)) = \theta(\alpha_t(a))$, $t\in G$, $a\in A$.
We call $\alpha$ a \emph{$\theta$-action}, if $\alpha^{\theta}_t$ can be extended to
a weak* continuous automorphism (which we will denote in the same fashion)
of $\theta(A)''$, such that the map $s\to\alpha_s^\theta(x)$ from $G$ into $\theta(A)''$
is weak*-continuous for each $x\in \theta(A)''$.

\begin{proposition}\label{p_cbcl}
Let $u : G\to \bb{C}$ be a bounded continuous function, and let $F_u : G\to CB(A)$ be given by
$F_u(t)(a) = u(t)a$, $a\in A, t\in G$.
The following are equivalent:

(i) \ $F_u$ is a Herz-Schur $(A,G,\alpha)$-multiplier;

(ii) $u\in M^{\cb}A(G)$.

Moreover, if (i) holds then $F_u$ is a Herz-Schur $\theta$-multiplier
for every faithful representation $\theta$ of $A$ on a separable
Hilbert space.
\end{proposition}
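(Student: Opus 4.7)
My plan is to route both directions of the equivalence through the transference Theorem \ref{th_tr} and the classical Bo\.{z}ejko--Fendler identification \cite{bf}. The starting point is the direct computation
$$\cl N(F_u)(s,t)(a) = \alpha_{t^{-1}}(u(ts^{-1})\alpha_t(a)) = u(ts^{-1})\,a = N(u)(s,t)\,a,$$
for $a\in A$, $s,t\in G$, which says $\cl N(F_u)$ is the scalar Toeplitz-type symbol $N(u)$ acting on $A$ by pointwise multiplication. Assuming (ii), Bo\.{z}ejko--Fendler produces a Grothendieck--Haagerup factorisation $N(u)(s,t)=\langle v(s),w(t)\rangle_K$ with $v,w:G\to K$ weakly measurable and essentially bounded into some separable Hilbert space $K$; setting $\rho(a)=I_K\otimes a$ on $K\otimes H$ together with $V(s)\xi=v(s)\otimes\xi$ and $W(t)\xi=w(t)\otimes\xi$, the identity $W(t)^*\rho(a)V(s)=\cl N(F_u)(s,t)(a)$ holds, Theorem \ref{th_chschura} gives $\cl N(F_u)\in\frak{S}(G,G;A)$, and Theorem \ref{th_tr} returns $F_u\in\frak{S}(A,G,\alpha)$.

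For the direction (i)$\Rightarrow$(ii), assume $F_u$ is a Herz-Schur $(A,G,\alpha)$-multiplier, so $S_{\cl N(F_u)}$ is completely bounded on $\cl K(L^2(G))\otimes A$ by Theorem \ref{th_tr}. On elementary tensors $T_k\otimes a$ with $k\in L^2(G\times G)$ and $a\in A$, the scalar identity for $\cl N(F_u)$ gives $S_{\cl N(F_u)}(T_k\otimes a)=S_{N(u)}(T_k)\otimes a$, where $S_{N(u)}$ is the classical Schur multiplier with symbol $N(u)$, so by density $S_{\cl N(F_u)}=S_{N(u)}\otimes\id_A$ on $\cl K(L^2(G))\otimes A$. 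Fixing a nonzero $a_0\in A$ and $\phi\in A^*$ with $\phi(a_0)=1$, the slice map $\id\otimes\phi:\cl K(L^2(G))\otimes A\to\cl K(L^2(G))$ is completely bounded and
$$S_{N(u)}(T)=(\id\otimes\phi)\bigl(S_{\cl N(F_u)}(T\otimes a_0)\bigr),\ \ T\in\cl K(L^2(G)),$$
so complete boundedness transfers from $S_{\cl N(F_u)}$ to $S_{N(u)}$; Bo\.{z}ejko--Fendler then yields $u\in M^{\cb}A(G)$.

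For the ``moreover'' claim, assume (ii) and fix a faithful *-representation $\theta:A\to\cl B(K)$ on a separable $K$. Applying Proposition \ref{p_ext} with $A=\bb{C}$ extends the classical Schur multiplier with symbol $N(u)$ to a weak* continuous completely bounded map $S_{N(u)}:\cl B(L^2(G))\to\cl B(L^2(G))$. Identifying $L^2(G,K)=L^2(G)\otimes K$ and $\cl B(L^2(G,K))=\cl B(L^2(G))\bar\otimes\cl B(K)$, set $\Psi:=S_{N(u)}\bar\otimes\id_{\cl B(K)}$; this is weak* continuous and completely bounded. The key identity to verify is
$$\Psi\bigl(\pi^\theta(a)\lambda_t^\theta\bigr)=u(t)\pi^\theta(a)\lambda_t^\theta,\ \ \ a\in A,\ t\in G,$$
after which integrating against $f\in L^1(G,A)$ yields $\Psi((\pi^\theta\rtimes\lambda^\theta)(f))=(\pi^\theta\rtimes\lambda^\theta)(uf)\in A\rtimes_{\alpha,\theta}G$, so weak* continuity promotes $\Psi$ to a weak* continuous completely bounded extension of $\Phi_{F_u}^\theta$ to $A\rtimes_{\alpha,\theta}^{w^*}G$, delivering $F_u$ as a Herz-Schur $\theta$-multiplier via Definition \ref{d_wsthe}. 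The main obstacle is the displayed identity itself: since $\lambda_t^\theta=\lambda_t^G\otimes I_K$ is transparent, the delicate step is pulling $S_{N(u)}$ through the $\cl B(K)$ factor when applied to $\pi^\theta(a)$, which acts as multiplication by $s\mapsto\theta(\alpha_{s^{-1}}(a))$ and so lies in $L^\infty(G)\bar\otimes\cl B(K)$. I would handle this by expanding $\pi^\theta(a)$ weak*-entry-by-entry against an orthonormal basis of $K$ and invoking the $L^\infty(G)$-bimodularity provided by Lemma \ref{l_modp} together with the classical relation $S_{N(u)}(\lambda_t^G)=u(t)\lambda_t^G$.
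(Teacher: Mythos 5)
Your proposal is correct and shares the paper's skeleton --- both directions pass through Theorem \ref{th_tr} and the Bo\.{z}ejko--Fendler theorem via the computation $\cl N(F_u)(s,t)(a)=u(ts^{-1})a$, and your (ii)$\Rightarrow$(i) is essentially the paper's argument with $H^{\infty}$ rewritten as $K\otimes H$ --- but two steps are handled by genuinely different mechanisms. For (i)$\Rightarrow$(ii) the paper invokes Theorem \ref{th_chschura} to factor $u(ts^{-1})a=W(t)^*\rho(a)V(s)$ and then strips away the algebra by evaluating against a bounded approximate identity of $A$ and a unit vector, arriving at a scalar factorisation of $N(u)$; you instead note $S_{\cl N(F_u)}(T_k\otimes a)=S_{N(u)}(T_k)\otimes a$ and recover the complete boundedness of $S_{N(u)}$ by composing $T\mapsto S_{\cl N(F_u)}(T\otimes a_0)$ with a slice map $\id\otimes\phi$, $\phi(a_0)=1$. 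This avoids both the representation-theoretic factorisation and the approximate-identity limit, at the negligible cost of the standard fact that slicing against a functional is completely bounded; it is arguably the more elementary route. For the ``moreover'' claim both arguments use the same object, $\Psi_u\otimes\id$, but the paper funnels it through Corollary \ref{c_weakst} and Remark \ref{herz_schur_theta}, whereas you verify $\Psi(\pi^\theta(a)\lambda_t^\theta)=u(t)\pi^\theta(a)\lambda_t^\theta$ directly from bimodularity of $S_{N(u)}\bar\otimes\id$ over $\cl D_G\bar\otimes\cl B(K)$ together with $S_{N(u)}(\lambda_t^G)=u(t)\lambda_t^G$. That verification gives the covariance identity for \emph{every} $t\in G$ rather than almost every $t$, which is what Definition \ref{d_wsthe} literally requires, so it is slightly more self-contained; you should, however, actually close the bimodularity step rather than leave it as a plan --- weak* continuity of $x\mapsto\Psi(xT)$ and $x\mapsto x\Psi(T)$ together with weak* density of $\cl D_G\odot\cl B(K)$ in $\cl D_G\bar\otimes\cl B(K)$ does it in two lines.
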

\begin{proof}
Set $F = F_u$.  We have
\begin{equation}\label{eq_Nfst}
\cl N(F)(s,t)(a) = u(ts^{-1})a, \ \ a\in A, \ s,t\in G.
\end{equation}
We assume, without loss of generality, that $A$ is a non-degenerate
C*-subalgebra of $\cl B(H)$, for a separable Hilbert space $H$.

(i)$\Rightarrow$(ii)   By Theorems \ref{th_chschura} and \ref{th_tr}, there exist a separable Hilbert space $K$,
a non-degenerate *-representation $\rho : A\to \cl B(K)$ and elements
$V,W$ of $L^{\infty}(G,\cl B(H,K))$ such that
$$u(ts^{-1})a = W(t)^*\rho(a)V(s), \ \ \mbox{for almost all } s,t\in G
 \mbox{ and all } a\in A.$$
Let $(a_i)_{i=1}^{\infty}$ be a bounded approximate identity for $A$.  Then,
for a unit vector $\xi\in H$ and every $i\in \bb{N}$, we have
$$\langle u(ts^{-1}) a_i\xi,\xi\rangle = \langle \rho(a_i)V(s)\xi, W(t)\xi\rangle, \ \
\mbox{ for almost all } s,t\in G.$$
Since $A\subseteq \cl B(H)$ is non-degenerate and $\rho$ is a non-degenerate representation,
passing to a limit along $i$, we obtain
$$u(ts^{-1}) = \langle V(s)\xi, W(t)\xi\rangle,\ \ \mbox{ for almost all } s,t\in G.$$
By \cite{bf}, $u\in M^{\cb}A(G)$.

(ii)$\Rightarrow$(i) As $G$ is second countable, by \cite{bf}, there exist weakly measurable functions
$\xi, \eta : G\to \ell^2$ such that
$$u(ts^{-1}) = \langle \xi(s),\eta(t)\rangle, \ \ \mbox{ for almost all }s,t\in G.$$
Let $\rho : A\to \cl B(H^{\infty})$ be the countable
ampliation of the identity
representation of $A$. Write $\xi(s) = (\xi_i(s))_{i\in \bb{N}}$ and
$\eta(t) = (\eta_i(t))_{i\in \bb{N}}$, $s,t\in G$.
Let $V(s) : H\to H^{\infty}$ (resp. $W(t) : H\to H^{\infty}$)
be given by $V(s) = (\xi_i(s) I_H)_{i\in \bb{N}}$ (resp. $W(t) = (\eta_i(t) I_H)_{i\in \bb{N}}$).
Then $V, W \in L^{\infty}(G,\cl B(H,H^{\infty}))$ and
$$W(t)^*\rho(a)V(s) = \sum_{i=1}^{\infty}\xi_i(s)\overline{\eta_i(t)} a = u(ts^{-1}) a, $$
for almost all $s,t\in G$ and all  $a\in A$.
It follows by (\ref{eq_Nfst}) and
Theorems \ref{th_chschura} and \ref{th_tr} that $F_u$ is a Herz-Schur $(A,G,\alpha)$-multiplier.

Now suppose that $u\in M^{\cb}A(G)$ and
denote by $\Psi_u$ the weak* continuous completely bounded map
on $\cl B(L^2(G))$ corresponding to
the function $u$ {\it via} classical transference \cite{bf}  (see Remark \ref{r_classt}).
Let $\theta : A\to \cl B(K)$ be a faithful *-representation of $A$, for some separable Hilbert space $K$.
Note that $\cl N(F)$ is a Schur $\theta$-multiplier; indeed, we have that
$\cl N(F)_{\theta}(s,t)(\theta(a)) = u(ts^{-1})\theta(a)$, $a\in A$, and hence
$S_{\cl N(F)_{\theta}} = \Psi_u|_{\cl K(L^2(G))}\otimes \id_{\theta(A)}$.
It follows that $S_{\cl N(F)_{\theta}}$ is the restriction to
$\cl K(L^2(G))\otimes \theta(A)$ of the
weak* continuous map $\Psi_u\otimes \id_{\theta(A)''}$.
By Corollary \ref{c_weakst} and Remark \ref{herz_schur_theta},
$F_u$ is a Herz-Schur $\theta$-multiplier.
\end{proof}

In what follows we denote by $S_u^{\theta}$ the weak* continuous map on
$A\rtimes_{\alpha,\theta}^{w^*}G$ arising from the previous
proposition.

It is well-known that an essentially bounded function on $G$
that is invariant under right translations agrees almost everywhere with a constant function.
The next lemma is a dynamical system version of this fact.
For a *-representation $\theta : A\to \cl B(K)$ such that
$\alpha$ is a $\theta$-action, let $\bar\pi^{\theta}$ be the
*-representation of $\theta(A)''$ on $L^2(G,K)$ given by
$(\bar\pi^{\theta}(a)\xi)(s)=\alpha_s^{-1}(a)(\xi(s))$, $a\in \theta(A)''$,
$\xi\in L^2(G,K)$.
Recall that if $\rho:G\to\cl B(L^2(G))$ is the right regular representation of $G$ we write $\tilde\alpha_r$ for the map ${\rm Ad}\rho_r\otimes\alpha_r$ on $\cl K(L^2(G))\otimes\theta(A)$;
we have that the map $\tilde\alpha_r$ can be extended to a weak* continuous map $\cl B(L^2(G)\bar\otimes\theta(A)''$, denoted in the same fashion.

\begin{lemma}\label{l_intf}
Let $(A,G,\alpha)$ be a $C^*$-dynamical system and
$\theta : A\to \cl B(K)$ be a faithful *-representation, where $K$ is a separable Hilbert space,
such that $\alpha$ is a $\theta$-action.
Then
\begin{equation}\label{eq_ast}
(\cl D_G\bar\otimes \theta(A)'') \cap (A\rtimes_{\alpha,\theta}^{w^*} G) = \bar\pi^{\theta}(\theta(A)'').
\end{equation}
\end{lemma}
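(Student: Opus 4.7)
The plan is to verify the two inclusions separately; the $\supseteq$ direction is essentially by construction, while $\subseteq$ is obtained via an invariance argument combined with a two-variable Fubini manipulation.

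For $\supseteq$, I would first observe that $\bar\pi^\theta\circ\theta = \pi^\theta$ on $A$, since both operators act on $\xi\in L^2(G,K)$ as $s\mapsto \alpha_{s^{-1}}(\theta(a))\xi(s)$. Hence $\bar\pi^\theta(\theta(A))\subseteq A\rtimes_{\alpha,\theta}G$. Since $\alpha$ is a $\theta$-action, for each $a\in \theta(A)''$ the map $s\mapsto \alpha_{s^{-1}}(a)$ is bounded and weak*-continuous, so it determines an element of $L^\infty(G,\theta(A)'')\equiv\cl D_G\bar\otimes\theta(A)''$ via \cite[Theorem 7.10]{takesaki1}; this shows $\bar\pi^\theta(\theta(A)'')\subseteq\cl D_G\bar\otimes\theta(A)''$. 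Dominated convergence, combined with the weak*-continuity of each $\alpha_{s^{-1}}$, shows that $\bar\pi^\theta$ is weak*-to-weak* continuous on bounded subsets of $\theta(A)''$, and Kaplansky density then gives $\bar\pi^\theta(\theta(A)'')\subseteq\overline{\bar\pi^\theta(\theta(A))}^{w^*}\subseteq A\rtimes_{\alpha,\theta}^{w^*}G$.

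For $\subseteq$, I would use the weak*-continuous extension $\tilde\alpha_r = \mathrm{Ad}\,\rho_r\otimes\alpha_r$ to $\cl B(L^2(G))\bar\otimes\theta(A)''$. The first step is to show that $\tilde\alpha_r$ acts as the identity on $A\rtimes_{\alpha,\theta}^{w^*}G$. This is a calculation on generators: $\lambda_t^\theta = \lambda_t^G\otimes I_K$ is fixed because $\rho_r$ commutes with $\lambda_t^G$, while for $D_g\in\cl D_G\bar\otimes\theta(A)''$ one finds (as in Lemma \ref{l_alpahr}) that $\tilde\alpha_r(D_g)$ corresponds to the function $s\mapsto \alpha_r(g(sr))$; applying this to $g_a(s)=\alpha_{s^{-1}}(\theta(a))$ one sees $\alpha_r(g_a(sr))=\alpha_r\alpha_{r^{-1}}\alpha_{s^{-1}}(\theta(a))=g_a(s)$, so $\tilde\alpha_r(\pi^\theta(a))=\pi^\theta(a)$. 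Weak*-continuity then propagates the identity to all of $A\rtimes_{\alpha,\theta}^{w^*}G$.

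Next, let $T$ lie in the intersection and write $T=D_g$ with $g\in L^\infty(G,\theta(A)'')$. The invariance $\tilde\alpha_r(T)=T$ becomes the a.e. identity $g(sr)=\alpha_r^{-1}(g(s))$ in $s$, for each fixed $r$. Using a countable weak*-dense subset of the predual $(\theta(A)'')_*$ (separable because $K$ is) and Fubini in the spirit of Lemma \ref{l_mt}, this upgrades to an a.e. identity jointly in $(s,r)\in G\times G$. Consequently, there exists a fixed $s_0\in G$ such that $g(s_0 r) = \alpha_r^{-1}(g(s_0))$ for almost every $r$; substituting $r=s_0^{-1}t$ and setting $a=\alpha_{s_0}(g(s_0))\in\theta(A)''$ gives $g(t)=\alpha_{t^{-1}}(a)$ a.e., so $T=D_g=\bar\pi^\theta(a)$. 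The main obstacle is this final measure-theoretic step: the fiberwise a.e. identity has a different null exception set $N_r$ for each $r$, and one must promote this to a joint a.e. identity before Fubini can extract a single distinguished $s_0$; this is essentially the content of Lemma \ref{l_mt} adapted to $\theta(A)''$-valued functions, made possible by the separability of the predual.
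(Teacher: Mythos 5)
Your proof is correct and follows essentially the same route as the paper: both arguments reduce the inclusion $\subseteq$ to the $\tilde\alpha_r$-invariance of elements of $A\rtimes_{\alpha,\theta}^{w^*}G$, and then run the Fubini/countable-dense-subset argument of Lemma \ref{l_mt} to produce a single $s_0$ and conclude that the symbol of the operator is $t\mapsto\alpha_{t^{-1}}(a)$. Two small points of comparison. First, where you derive $\tilde\alpha_r(D)=D$ for \emph{all} $r$ by checking it on the generators $\pi^{\theta}(a)\lambda_t^{\theta}$ and invoking weak* continuity, the paper instead cites Nakagami--Takesaki to get invariance for almost all $r$; your version is self-contained and in fact slightly stronger, and the subsequent measure-theoretic step goes through either way. (The paper also first passes to $\tilde D(s)=\alpha_s(D(s))$ so that the invariance becomes the pure translation identity $\tilde D(sr)=\tilde D(s)$, which matches the hypotheses of Lemma \ref{l_mt} verbatim; your formulation $g(sr)=\alpha_r^{-1}(g(s))$ requires the obvious adaptation of that lemma to accommodate the $r$-dependent automorphism, which is harmless but worth a sentence.) Second, in the $\supseteq$ direction your intermediate claim that $\bar\pi^{\theta}(\theta(A))=\pi^{\theta}(A)$ lies in the norm-closed reduced crossed product is not true in general (already for $A=\bb{C}$ and $G$ non-discrete, $\pi(1)=I\notin C_r^*(G)$); $\pi^{\theta}(A)$ lies only in the multiplier algebra. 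What is true, and all you need, is that $\pi^{\theta}(a)$ belongs to the \emph{weak*} closure $A\rtimes_{\alpha,\theta}^{w^*}G$ (e.g., as a weak* limit of $(\pi^{\theta}\rtimes\lambda^{\theta})(f_U\otimes a)$ for a Dirac family $(f_U)$ at the identity), after which your Kaplansky density step is fine.
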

\begin{proof}
Suppose that $D$ belongs to both $L^{\infty}(G,\theta(A)'')$ and
$A\rtimes_{\alpha,\theta}^{w^*} G$.
By \cite[Theorem 1.2, Chapter II]{nakagami-takesaki},
there exists a null set $M\subseteq G$ such that
$\tilde{\alpha}_r(D) = D$ whenever $r\in M^c$.
However, $\tilde{\alpha}_r(D) = D_r$, where
$D_r \in L^{\infty}(G,\theta(A)'')$ is given by $D_r(s) = \alpha_r(D(sr))$, $s\in G$.
Let $\tilde{D}\in L^{\infty}(G,\theta(A)'')$ be defined by
$\tilde{D}(s) = \alpha_s(D(s))$, $s\in G$.
For every $r\in M^c$,
$$\tilde{D}(sr) = \alpha_{sr}(D(sr)) = \alpha_s(\alpha_r(D(sr)) = \alpha_s(D(s)) = \tilde{D}(s), \
\mbox{for almost all } s.$$
As in the proof of Lemma \ref{l_mt}, there exists $s_0\in G$ such that
$\tilde{D}(s_0r) = \tilde{D}(s_0)$ for almost all $r\in G$.
Thus, there exists $a\in \theta(A)''$ such that $\tilde{D}(t) = a$ for almost all $t\in G$,
and hence $D(t) = \alpha_{t^{-1}}(a)$ for almost all $t\in G$; in other words, $D = \bar{\pi}^{\theta}(a)$.
We thus showed  that the intersection on the left hand side of (\ref{eq_ast}) is
contained in $\bar{\pi}^{\theta}(\theta(A)'')$. The converse inclusion is trivial.
\end{proof}

Let $K$ be a Hilbert space. If $\omega\in \cl B(H)_*$, we let $L_{\omega}$
be the (unique) weak* continuous linear map from $\cl B(K\otimes H)$ into  $\cl B(K)$
such that $L_{\omega}(b\otimes a) = \omega(a)b$, $a\in \cl B(H)$, $b\in \cl B(K)$.
Recall that a weak* closed subspace $\cl U\subseteq \cl B(H)$
is said to have property $S_{\sigma}$ \cite{kraus_tams}
if
$$\cl V\bar\otimes\cl U = \{T\in \cl B(K)\bar\otimes\cl U : L_{\omega}(T)\in \cl V
\mbox{ for all } \omega\in \cl B(H)_*\},$$
for every weak* closed subspace $\cl V\subseteq \cl B(K)$.

For the proof of the next theorem, we recall that an operator $T\in \cl B(L^2(G))$
is said to be supported on a measurable subset $E\subseteq G\times G$ if
$M_{\chi_{\beta}}TM_{\chi_{\alpha}} = 0$ whenever $\alpha,\beta\subseteq G$
are measurable sets with $(\alpha\times\beta)\cap E = \emptyset$.
It is easy to see that the space of operators supported on the set
$\{(s,ts) : s\in G\}$ coincides with $\cl D_G\lambda_t^G$.

\begin{theorem}\label{th_commute}
Let $(A,G,\alpha)$ be a C*-dynamical system
and $\theta$ be a faithful *-representation of $A$ on a separable Hilbert space $K$
such that $\alpha$ is a $\theta$-action and
$\theta(A)''$ possesses property $S_{\sigma}$.
Let $\Phi$ be a completely bounded weak* continuous map on $A\rtimes_{\alpha,\theta}^{w^*} G$.
The following are equivalent:

(i) \ $\Phi S_{u}^\theta = S_{u}^\theta\Phi$ for all $u\in M^{\cb}A(G)$;

(ii) For each $t\in G$, there exists
completely bounded map
$F_t : \theta(A)''\to \theta(A)''$ such that $\Phi(\bar{\pi}^\theta(a)\lambda_t^\theta) = \bar{\pi}^\theta(F_t(a))\lambda_t^\theta$, $a\in \theta(A)''$.
\end{theorem}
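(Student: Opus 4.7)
The plan is to prove each direction separately, with (ii) $\Rightarrow$ (i) being a quick verification and (i) $\Rightarrow$ (ii) constituting the substantive work.

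For (ii) $\Rightarrow$ (i), the first step will be the direct computation
$S_u^\theta(\bar\pi^\theta(a)\lambda_t^\theta) = u(t)\bar\pi^\theta(a)\lambda_t^\theta$
for every $a\in \theta(A)''$, $t\in G$ and $u\in M^{\cb}A(G)$, which follows from the description of $S_u^\theta$ furnished by Proposition \ref{p_cbcl} and the formula $\lambda_t^\theta a \lambda_{t^{-1}}^\theta = \alpha_t^\theta(a)$ for the covariant representation. Combined with (ii), both $\Phi\circ S_u^\theta$ and $S_u^\theta\circ \Phi$ send $\bar\pi^\theta(a)\lambda_t^\theta$ (for $a\in\theta(A)$) to $u(t)\bar\pi^\theta(F_t(a))\lambda_t^\theta$, so the two compositions agree on a subset whose linear span is weak*-dense in $A\rtimes_{\alpha,\theta}^{w^*}G$. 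Since both compositions are weak*-continuous, they coincide on the whole crossed product.

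For (i) $\Rightarrow$ (ii), the strategy is to introduce, for each $t\in G$, the weak*-closed joint $t$-eigenspace
$$B_t = \{T\in A\rtimes_{\alpha,\theta}^{w^*}G : S_u^\theta(T) = u(t)T \text{ for all } u\in M^{\cb}A(G)\}$$
and to show $B_t = \bar\pi^\theta(\theta(A)'')\lambda_t^\theta$. The inclusion $\bar\pi^\theta(\theta(A)'')\lambda_t^\theta\subseteq B_t$ is the same direct calculation as above. Granted the reverse inclusion, hypothesis (i) forces $\Phi(\bar\pi^\theta(a)\lambda_t^\theta)\in B_t$ for every $a\in \theta(A)''$, yielding a unique $F_t(a)\in\theta(A)''$ with $\Phi(\bar\pi^\theta(a)\lambda_t^\theta)=\bar\pi^\theta(F_t(a))\lambda_t^\theta$; linearity of $F_t$ is inherited from $\Phi$, and complete boundedness follows since $\bar\pi^\theta$ is a faithful normal $*$-representation and $\lambda_t^\theta$ is unitary.

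The heart of the proof is thus the inclusion $B_t\subseteq \bar\pi^\theta(\theta(A)'')\lambda_t^\theta$. The idea is to slice: since $S_u^\theta$ is the restriction to $A\rtimes_{\alpha,\theta}^{w^*}G$ of $\Psi_u\otimes\id_{\theta(A)''}$, where $\Psi_u$ is the classical Bo\.{z}ejko--Fendler weak*-continuous Schur multiplier on $\cl B(L^2(G))$, for $T\in B_t$ and $\omega\in (\theta(A)'')_*$ one has $\Psi_u(L_\omega(T)) = u(t)L_\omega(T)$ for every $u\in M^{\cb}A(G)$. The crucial scalar lemma -- and the main obstacle in the proof -- asserts that any $R\in \cl B(L^2(G))$ satisfying this eigenvalue equation for every $u$ necessarily lies in $\cl D_G\lambda_t^G$; the substitution $R\mapsto R\lambda_{t^{-1}}^G$ together with the translation rule for Schur multipliers reduces it to the case $t=e$, which in turn follows from the support-type identification $\bigcap\{\ker\Psi_u : u\in A(G),\, u(e)=0\} = \cl D_G$ that rests on the regularity of the Fourier algebra. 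Once this lemma is in hand, the property $S_\sigma$ of $\theta(A)''$ promotes the slice-wise condition $L_\omega(T)\in \cl D_G\lambda_t^G$ to $T\in \cl D_G\lambda_t^G\bar\otimes\theta(A)''$; right multiplication by $\lambda_{t^{-1}}^\theta\in A\rtimes_{\alpha,\theta}^{w^*}G$ then places $T\lambda_{t^{-1}}^\theta$ inside $(\cl D_G\bar\otimes\theta(A)'')\cap A\rtimes_{\alpha,\theta}^{w^*}G$, which equals $\bar\pi^\theta(\theta(A)'')$ by Lemma \ref{l_intf}. This gives $T\in \bar\pi^\theta(\theta(A)'')\lambda_t^\theta$, completing the argument.
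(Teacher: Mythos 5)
Your proposal is correct and follows essentially the same route as the paper's proof: slicing with $L_{\omega}$ to reduce to a scalar statement about operators jointly fixed (or annihilated) by the maps $\Psi_u$, invoking the support-theoretic result from \cite{akt} together with the regularity of $A(G)$ to place the slices in $\cl D_G\lambda_t^G$, and then using property $S_\sigma$ and Lemma \ref{l_intf} to conclude. Your repackaging via the eigenspace $B_t$ and the translation-reduction to $t=e$ is only a cosmetic variant of the paper's argument, which works with the set $J=\{u : u(t)=1\}$ and the harmonic-operator formulation directly at the point $t$.
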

\begin{proof}
(i)$\Rightarrow$(ii)
Given $u\in M^{\cb}A(G)$, let $\Psi_u$ be the weak* continuous completely bounded
map on $\cl B(L^2(G))$ corresponding to the Schur multiplier $N(u)$ (see Remark \ref{r_classt}).
We claim that
\begin{equation}\label{eq_ome}
(L_{\omega}\circ S_u^\theta)(T) = (\Psi_u\circ L_{\omega})(T), \ \ \
T\in A\rtimes_{\alpha,\theta}^{w^*} G, \ \omega\in \cl B(K)_*.
\end{equation}
For $S\in \cl B(K)$ and $T\in \cl B(L^2(G))$, we have
$$L_\omega((S\otimes T)(I_K\otimes\lambda_t^G))=\omega(S)T\lambda_t^G=L_\omega(S\otimes T)\lambda_t^G;$$
by weak* continuity, we obtain
\begin{equation}\label{eq_tG}
L_\omega(R(I_K\otimes\lambda_t^G)) = L_\omega(R)\lambda_t^G, \ \ \ R\in \cl B(L^2(G,K)).
\end{equation}
On the other hand, if $t\in G$ then
$\lambda_t^{\theta} = I_K\otimes\lambda_t^G$, and it follows that
\begin{equation}\label{eq_ttt}
L_{\omega}(S_u^\theta(\pi^\theta(a)\lambda_t^\theta)) = u(t)L_{\omega}(\pi^\theta(a)\lambda_t^\theta)
= u(t)L_{\omega}(\pi^\theta(a))\lambda_t^G, \ \ a\in A, t\in G.
\end{equation}
Since $\pi^\theta(a)\in L^{\infty}(G,\theta(A)'')$,
we have that $L_{\omega}(\pi^\theta(a))\in \cl D_G$, for every $\omega\in \cl B(K)_*$.
Since $\Psi_u$ is a $\cl D_G$-bimodule map, using equation (\ref{eq_tG}) we obtain
\begin{eqnarray}\label{eq_tttt}
\Psi_u(L_{\omega}(\pi^\theta(a)\lambda_t^\theta))
& = & \Psi_u(L_{\omega}(\pi^\theta(a))\lambda_t^G) =
L_{\omega}(\pi^\theta(a))\Psi_u(\lambda_t^G)\\
&  = &
u(t)L_{\omega}(\pi^\theta(a))\lambda_t^G.\nonumber
\end{eqnarray}
Equation (\ref{eq_ome}) follows from the weak* continuity of $L_{\omega}$ and $S_u^\theta$
(see Proposition \ref{p_cbcl}),
after comparing (\ref{eq_ttt}) and (\ref{eq_tttt}).

Let $a\in \theta(A)''$, $t\in G$ and $T = \bar{\pi}^\theta(a)\lambda_t^\theta$.
Set
$$J = \{u\in M^{\cb}A(G) : u(t) = 1\}.$$
If $u\in J$ then $S_u^{\theta}(T) = T$ and hence, by (\ref{eq_ome})
and the fact that $\Phi$ commutes with $S_u^{\theta}$, we have
$$\Psi_u(L_{\omega}(\Phi(T))) = L_{\omega}(\Phi(T)).$$
Thus, for every $u\in J$, the operator $L_{\omega}(\Phi(T))$ is $u$-harmonic in the sense of
\cite{neurun}.
It follows from \cite[Corollary 3.7]{akt} that $L_{\omega}(\Phi(T))$ is supported on the set
$\{(x,y)\in G\times G : yx^{-1}\in Z\}$, where $Z = \{s\in G : u(s) = 1, \mbox{ for all } u\in J\}$.
By the regularity of $A(G)$ and the fact that $A(G)\subseteq M^{\cb}A(G)$,
we have that $Z = \{t\}$ and hence, by (\ref{eq_tG}) and
the paragraph before the statement of the theorem,
$$L_{\omega}(\Phi(T)\lambda_{t^{-1}}^\theta) = L_{\omega}(\Phi(T))\lambda_{t^{-1}}^G\in \cl D_G.$$
Since this holds for every $\omega\in \cl B(K)_*$ and $\theta(A)''$ is assumed to possess property $S_{\sigma}$,
we conclude that $\Phi(T)\lambda_{t^{-1}}^\theta \in \cl D_G\bar\otimes \theta(A)''$.

On the other hand, $\Phi(T)\lambda_{t^{-1}}^\theta \in A\rtimes_{\alpha,\theta}^{w^*} G$.
By Lemma \ref{l_intf}, $\Phi(T)\lambda_{t^{-1}}^{\theta} = \bar{\pi}^\theta(a_t)$ for some $a_t\in \theta(A)''$,
and hence $\Phi(T) = \bar{\pi}^\theta(a_t))\lambda_t^\theta$.
Writing $F_t(a)=a_t$, we have
$\Phi(T) = \bar{\pi}^\theta(F_t(a)))\lambda_t^\theta$.
The map $F_t$ is linear and completely bounded since $\Phi$ is so.

(ii)$\Rightarrow$(i)
For $t\in G$ and $a\in \theta(A)''$, we have
$$\Phi(S_u^\theta(\bar\pi^\theta(a)\lambda_t^\theta)) = u(t) \Phi(\bar\pi^\theta(a)\lambda_t^\theta) =
u(t) \bar{\pi}^\theta(F_t(a))\lambda_t^\theta = S_u^\theta(\Phi(\bar\pi^\theta(a)\lambda_t^\theta)).$$
The commutation relations now follow by linearity and weak* continuity.
\end{proof}

\section{Two classes of multipliers}\label{s_scm}

In this section, we describe two special classes of Herz-Schur multipliers
and relate them to maps that have been studied previously.

\subsection{Multipliers from the Haagerup tensor product}
Multipliers of the type studied in this subsection have been considered in the
case of a discrete group in \cite{bc}.
Let $A$ be a separable non-degenerate C*-subalgebra of $\cl B(H)$,
where $H$ is a separable Hilbert space,
and $C_{\infty}(A)$ be the column operator space over $A$; thus, the elements of $C_{\infty}(A)$
are the sequences $(a_i)_{i\in \bb{N}}\subseteq A$ such that the series
$\sum_{i=1}^{\infty} a_i^* a_i$ converges in norm.
Recall \cite{blm} that the Haagerup tensor product $A\otimes_{\rm h} A$
consists, by definition, of all sums $u = \sum_{i=1}^{\infty} b_i\otimes a_i$,
where $(a_i)_{i\in \bb{N}}, (b_i^*)_{i\in \bb{N}} \in C_{\infty}(A)$.
Let $\beta : X\to C_{\infty}(A)$ and $\gamma : Y\to C_{\infty}(A)$ be
bounded weakly measurable functions.
Write $\beta(x) = (\beta_i(x))_{i\in \bb{N}}$, $x\in X$,
and $\gamma(y) = (\gamma_i(y))_{i\in \bb{N}}$, $y\in Y$.
It is clear that, in particular, $\beta_i\in L^\infty(X,A)$ and $\gamma_i\in L^\infty(Y,A)$
for each $i\in \bb{N}$.

Let $\nph_{\beta,\gamma} : X\times Y \to A\otimes_{\hh} A$ be given by
\begin{equation}\label{eq_gamma}
\nph_{\beta,\gamma}(x,y) = \sum_{i=1}^{\infty} \gamma_i(y)^*\otimes \beta_i(x), \ \ \ (x,y)\in X\times Y.
\end{equation}
Note that $A\otimes_{\rm h} A$ embeds canonically into $CB(A)$;
for an element $u = \sum_{i=1}^{\infty} b_i\otimes a_i$ of $A\otimes_{\rm h} A$,
the corresponding map
$\Phi_u : A\to A$ is given by $\Phi_u(a) = \sum_{i=1}^{\infty} b_i a a_i$, $a\in A$.
We thus view $\nph_{\beta,\gamma}(x,y)$ as a completely bounded map on $A$.
It is easy to see that the partial sums of (\ref{eq_gamma}) define
weakly measurable functions, and since the convergence of the series is in norm,
\cite[Lemma B.17]{dw} shows that the function
$\nph_{\beta,\gamma}$ is weakly measurable.
In particular, $\nph_{\beta,\gamma}$ is pointwise measurable.

\begin{proposition}\label{p_bc}
Let $\beta : X\to C_{\infty}(A)$ and $\gamma : Y\to C_{\infty}(A)$ be bounded
weakly measurable functions.
Then $\nph_{\beta,\gamma}$ is a Schur $\id$-multiplier.
Moreover,
\begin{equation}\label{eq_gammabeta}
S_{\nph_{\beta,\gamma}}(T) =
\sum_{i=1}^{\infty} \gamma_i^* T \beta_i, \ \ \ T\in \cl K\otimes A,
\end{equation}
where the series converges in norm.
\end{proposition}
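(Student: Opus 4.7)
The plan is to verify condition (ii) of Theorem~\ref{th_chschura} with an explicit choice of representation and operator-valued functions, and then to unpack the resulting formula for $S_{\nph_{\beta,\gamma}}$. Take $K = H\otimes\ell^2$, let $\rho : A\to \cl B(K)$ be the ampliation $\rho(a) = a\otimes I_{\ell^2}$, and set
\[
V(x)\xi = (\beta_i(x)\xi)_{i\in\bb{N}}, \qquad W(y)\eta = (\gamma_i(y)\eta)_{i\in\bb{N}}.
\]
Boundedness is immediate from $\|V(x)\xi\|^2 = \langle\sum_i\beta_i(x)^*\beta_i(x)\xi,\xi\rangle \leq \|\beta(x)\|_{C_\infty(A)}^2\|\xi\|^2$ and likewise for $W(y)$. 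Weak measurability of $V(\cdot)\xi$ and $V(\cdot)^*\eta'$ follows termwise from that of each $\beta_i$, placing $V\in L^{\infty}(X,\cl B(H,K))$ and $W\in L^{\infty}(Y,\cl B(H,K))$. A direct computation yields $W(y)^*\rho(a)V(x)\xi = \sum_i\gamma_i(y)^*a\beta_i(x)\xi = \nph_{\beta,\gamma}(x,y)(a)\xi$, so Theorem~\ref{th_chschura} implies that $\nph_{\beta,\gamma}$ is a Schur $A$-multiplier.

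From the proof of (ii)$\Rightarrow$(i) of that theorem we have $S_{\nph_{\beta,\gamma}}(T) = W^*\tilde\rho(T)V$ for all $T\in \cl K\otimes A$, where $\tilde\rho = \id\otimes\rho$ and $V,W$ now denote the integrated operators on the $L^2$-spaces. Under the identification $L^2(X,K) \cong L^2(X,H)\otimes\ell^2$, the operator $V$ becomes the column $(D_{\beta_i})_i$ of multiplication operators, $W=(D_{\gamma_i})_i$, and $\tilde\rho(T) = T\otimes I_{\ell^2}$. Denoting by $P_N$ the projection onto the first $N$ standard basis vectors of $\ell^2$ and using that $\tilde\rho(T)$ commutes with $I\otimes P_N$, a short calculation gives
\[
\sum_{i=1}^N\gamma_i^* T\beta_i = W^*(I\otimes P_N)\tilde\rho(T)V,
\]
so the partial sums converge to $S_{\nph_{\beta,\gamma}}(T)$ in the strong operator topology, with the uniform bound $\|\sum_{i=M+1}^N\gamma_i^*T\beta_i\| \leq \|\gamma\|_\infty\|\beta\|_\infty\|T\|$ following from $\|V\|\leq\|\beta\|_\infty$ and $\|W\|\leq\|\gamma\|_\infty$.

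To upgrade to operator norm convergence, I would first handle rank-one elementary tensors $T = |\eta\rangle\langle\xi|\otimes a$, for which a direct calculation gives $(\sum_{i>N}\gamma_i^*T\beta_i)\zeta(y) = \eta(y)\sum_{i>N}\gamma_i(y)^*a\,v_i(\zeta)$ with $v_i(\zeta) = \int\bar\xi(x)\beta_i(x)\zeta(x)\,dx$. A row--column estimate then yields
\[
\Bigl\|\sum_{i>N}\gamma_i(y)^*a\,v_i(\zeta)\Bigr\|^2 \leq \|a\|^2\,\Bigl\|\sum_{i>N}\gamma_i(y)^*\gamma_i(y)\Bigr\|\,\sum_{i>N}\|v_i(\zeta)\|^2,
\]
and since $\sum_i\|v_i(\zeta)\|^2 \leq \|\xi\|_2^2\|\beta\|_\infty^2\|\zeta\|^2$ uniformly in $\zeta$, it suffices to show that $\int|\eta(y)|^2\|\sum_{i>N}\gamma_i(y)^*\gamma_i(y)\|\,dy \to 0$. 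This follows from the Lebesgue dominated convergence theorem: the integrand vanishes pointwise (by the very definition of $\gamma(y)\in C_\infty(A)$) and is dominated by the integrable function $|\eta(y)|^2\|\gamma\|_\infty^2$. Passing from rank-one tensors to general $T\in \cl K\otimes A$ is then a routine $\varepsilon/3$-argument using the uniform bound on partial sums together with density of finite-rank operators in $\cl K$ and of $\cl K\odot A$ in $\cl K\otimes A$. I expect the most delicate point to be the separation of variables in the rank-one tail estimate, which forces one factor to zero via DCT on the $\gamma$-side while the other is merely bounded uniformly in $\zeta$ on the unit ball.
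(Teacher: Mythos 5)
Your argument is correct, but it is organised quite differently from the paper's. The paper does not invoke Theorem \ref{th_chschura} at all: it verifies directly, by pairing against vectors, that $T_{\nph_{\beta,\gamma}\cdot k}=\sum_{i=1}^{\infty}\gamma_i^*T_k\beta_i$ for every kernel $k\in L^2(Y\times X,A)$, reads off complete boundedness from the row--column form of the right-hand side, and then gets (\ref{eq_gammabeta}) by density. You instead exhibit the explicit representation $\nph_{\beta,\gamma}(x,y)(a)=W(y)^*\rho(a)V(x)$ with $\rho(a)=a\otimes I_{\ell^2}$ and the column operators $V,W$, invoke Theorem \ref{th_chschura}(ii)$\Rightarrow$(i), and then unpack $S_{\nph_{\beta,\gamma}}=W^*\tilde\rho(\cdot)V$. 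Both routes are sound; what yours buys is a conceptually clean derivation of (\ref{eq_gammabeta}) from the general machinery, and --- more substantially --- an honest proof of the norm convergence of the series. The paper essentially asserts norm convergence from the boundedness of $\sum_i\beta_i^*\beta_i$ and of the corresponding sum for $\gamma$, which by itself only controls the partial sums uniformly; your rank-one-plus-dominated-convergence argument (forcing the tail $\bigl\|\sum_{i>N}\gamma_i(y)^*\gamma_i(y)\bigr\|$ to zero pointwise while keeping the $\beta$-side merely bounded on the unit ball, then passing to general $T$ by density) is exactly the missing detail, and it is correct.

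One genuine omission: the proposition claims that $\nph_{\beta,\gamma}$ is a Schur $\id$-multiplier, not merely a Schur $A$-multiplier, and your write-up stops at the latter. By Definition \ref{d_conthe} you must also note that $S_{\nph_{\beta,\gamma}}$ extends to a weak* continuous map on $\cl B\bar\otimes A''$. This is immediate from the formula you derive, since $T\mapsto W^*(T\otimes I_{\ell^2})V$ is defined and weak* continuous on all of $\cl B(L^2(X,H),L^2(Y,H))$, but the observation needs to be made for the stated conclusion to be reached.
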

\begin{proof}
First note that
$$\left\|\sum_{i=1}^{\infty} \beta_i^* \beta_i\right\|
= \text{essup}_{x\in X}\left\|\sum_{i=1}^{\infty} \beta_i^*(x)\beta_i(x)\right\|
= \text{essup}_{x\in X} \|\beta(x)\|^2,$$
and that a similar estimate holds for $\sum_{i=1}^{\infty} \gamma_i \gamma_i^*$.
It follows that the series on the right hand side of (\ref{eq_gammabeta}) converges in norm.
Let $k\in L^2(Y\times X, A)$, $\xi\in L^2(X,H)$ and $\eta\in L^2(Y,H)$. Then
\begin{eqnarray*}
\left\langle \sum_{i=1}^{\infty} \gamma_i^* T_k \beta_i \xi,\eta\right\rangle
& = &
\int_{X\times Y} \sum_{i=1}^{\infty} \langle k(y,x) \beta_i(x)\xi(x),\gamma_i(y)\eta(y)\rangle dxdy\\
& = &
\int_{X\times Y} \langle \nph_{\beta,\gamma}(x,y)(k(y,x))\xi(x),\eta(y)\rangle dxdy\\
& = &
\langle T_{\nph_{\beta,\gamma}\cdot k} \xi,\eta \rangle.
\end{eqnarray*}
It follows that $\nph_{\beta,\gamma}$ is a Schur $A$-multiplier.
Identity (\ref{eq_gammabeta}) now follows by boundedness.
Since the map expressed by the right hand side of (\ref{eq_gammabeta}) is
weak* extendible, we conclude that $\nph_{\beta,\gamma}$ is in fact a Schur $\id$-multiplier.
\end{proof}

Recall that a subset $E\subseteq G\times G$ is called \emph{marginally null} if
there exists a null set $M\subseteq G$ such that $E\subseteq (M\times G)\cup (G\times M)$.

\begin{proposition}\label{p_righti}
Let $\beta : X\to C_{\infty}(A)$ and $\gamma : Y\to C_{\infty}(A)$ be
bounded weakly measurable functions.
The following are equivalent:

(i) \ there exists $F\in\mathfrak S(A,G,\alpha)$ such that
$S_F^{\id}$ coincides with the restriction of $S_{\nph_{\beta,\gamma}}$
to $A\rtimes_{\alpha,\id} G$;

(ii) for every $a\in A$, the function $\nph_a : G\times G\to A$ given by
$$\nph_a(s,t) = \sum_{i=1}^{\infty} \alpha_t(\gamma_i(t))^* a\alpha_t(\beta_i(s)), \ \ s,t\in G,$$
has the property that, for every $r\in G$, $\nph_a(sr,tr) = \nph_a(s,t)$ for almost all $(s,t)$.

Moreover, if (i) holds then the map $S_F^{\id}$ has an extension to a bounded weak* continuous map on $A\rtimes_{\alpha,\id}^{w^*}G$.
\end{proposition}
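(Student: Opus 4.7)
\emph{Proof sketch.} The plan is to reduce the equivalence to Theorem \ref{th_invpart} by recognizing condition (ii) as the invariance of $\nph_{\beta,\gamma}$. A one-line computation gives, for every $a \in A$ and almost every $(s,t) \in G \times G$,
\begin{align*}
\cl T(\nph_{\beta,\gamma})(s,t)(a)
& = \alpha_t\!\left(\sum_{i=1}^{\infty}\gamma_i(t)^*\alpha_{t^{-1}}(a)\beta_i(s)\right) \\
& = \sum_{i=1}^{\infty}\alpha_t(\gamma_i(t))^*\,a\,\alpha_t(\beta_i(s)) = \nph_a(s,t).
\end{align*}
Since Proposition \ref{p_bc} places $\nph_{\beta,\gamma}$ in $\frak{S}_0(G,G;A)$---in fact it shows $\nph_{\beta,\gamma}$ is a Schur $\id$-multiplier, so that $S_{\nph_{\beta,\gamma}}$ admits a weak* continuous extension to $\cl B(L^2(G)) \bar\otimes A''$---Lemma \ref{l_clT} reformulates (ii) as the statement that $\nph_{\beta,\gamma}$ is an \emph{invariant} Schur $A$-multiplier.

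For (ii)$\Rightarrow$(i), I would invoke Theorem \ref{th_invpart} to obtain $F \in \frak{S}(A,G,\alpha)$ with $\cl N(F) = \nph_{\beta,\gamma}$ almost everywhere, and recycle the approximation argument from the proof of Theorem \ref{th_tr}. With $\theta = \id$, the kernels $h_{K_n}$ of (\ref{kernel_op}) associated to $f \in C_c(G,A)$ and an exhaustion $K_n \uparrow G$ satisfy $T_{h_{K_n}} \to (\pi^{\id} \rtimes \lambda^{\id})(f)$ weak*, and weak* continuity of the extension of $S_{\nph_{\beta,\gamma}}$ combined with identity (\ref{star}) yields
\[
S_{\nph_{\beta,\gamma}}\bigl((\pi^{\id} \rtimes \lambda^{\id})(f)\bigr) = (\pi^{\id} \rtimes \lambda^{\id})(F \cdot f) = S_F^{\id}\bigl((\pi^{\id} \rtimes \lambda^{\id})(f)\bigr).
\]
Density of $(\pi^{\id} \rtimes \lambda^{\id})(C_c(G,A))$ in $A \rtimes_{\alpha,\id} G$ then delivers (i). Moreover, the restriction to $A \rtimes_{\alpha,\id}^{w^*} G$ of the weak* continuous extension of $S_{\nph_{\beta,\gamma}}$ is weak* continuous and agrees with $S_F^{\id}$ on the weak* dense subalgebra $A \rtimes_{\alpha,\id} G$; since $S_F^{\id}$ maps $A \rtimes_{\alpha,\id} G$ into $A \rtimes_{\alpha,r} G \subseteq A \rtimes_{\alpha,\id}^{w^*} G$, weak* continuity confines this restriction to $A \rtimes_{\alpha,\id}^{w^*} G$, furnishing the ``moreover'' part.

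For the converse (i)$\Rightarrow$(ii), I would apply Lemma \ref{l_point} to $F$ to obtain a null set $N \subseteq G$ such that, for every $t \in G \setminus N$, every $a \in A$, and any Dirac family $(f_U)_U$ at $t$, the net $S_F^{\id}((\pi^{\id} \rtimes \lambda^{\id})(f_U \otimes a))$ converges weak* to $\pi^{\id}(F(t)(a)) \lambda_t^{\id}$. Since $(\pi^{\id} \rtimes \lambda^{\id})(f_U \otimes a) \to \pi^{\id}(a) \lambda_t^{\id}$ weak* and the extension of $S_{\nph_{\beta,\gamma}}$ is weak* continuous, assumption (i) forces
\[
S_{\nph_{\beta,\gamma}}\bigl(\pi^{\id}(a) \lambda_t^{\id}\bigr) = \pi^{\id}(F(t)(a))\, \lambda_t^{\id}, \qquad t \in G \setminus N,\ a \in A.
\]
Expanding the left-hand side via the formula $S_{\nph_{\beta,\gamma}}(T) = \sum_i D_{\gamma_i}^* T D_{\beta_i}$ from Proposition \ref{p_bc} and evaluating both sides pointwise on $\xi \in L^2(G,H)$---both act as multiplication by an $L^{\infty}(G,A)$-function followed by left translation by $t$---yields, after applying $\alpha_s$ and substituting $r = t^{-1}s$, the identity $\nph_a(r, tr) = F(t)(a)$ for almost every $r \in G$. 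Independence of the right-hand side from $r$ together with a change of variables $(s,t) \mapsto (sr_0, tr_0)$ then gives $\nph_a(sr_0, tr_0) = \nph_a(s,t)$ almost everywhere for each $r_0 \in G$, which is (ii). The main obstacle I anticipate is precisely this last step: one must carefully coordinate the column/row formula for $S_{\nph_{\beta,\gamma}}$ with the covariant structure of $\pi^{\id}$ and $\lambda^{\id}$ so that the identity $\nph_a(r, tr) = F(t)(a)$ drops out cleanly.
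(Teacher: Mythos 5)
Your proposal is correct and follows essentially the same route as the paper: the identity $\cl T(\nph_{\beta,\gamma})(s,t)(a)=\nph_a(s,t)$ combined with Lemma \ref{l_clT} and Theorem \ref{th_invpart} for (ii)$\Rightarrow$(i), the weak* extension from Proposition \ref{p_bc} for the ``moreover'' part, and the Dirac-family/weak*-continuity argument (the content of Corollary \ref{c_weakst}, which the paper invokes directly) together with the explicit formula $S_{\nph_{\beta,\gamma}}(T)=\sum_i\gamma_i^*T\beta_i$ for (i)$\Rightarrow$(ii). The final computation you flag as a potential obstacle is exactly the one the paper carries out, using $\lambda_s^{\id}\beta_i=\beta_i^s\lambda_s^{\id}$ and separability of $H$, and it yields $\nph_a(s,t)=F(ts^{-1})(a)$ almost everywhere as you predict.
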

\begin{proof}
(i)$\Rightarrow$(ii)
By Proposition \ref{p_bc},
the map $S_{\nph_{\beta,\gamma}}$ has a weak* continuous
extension to a completely bounded map on $\cl B(L^2(G))\bar\otimes A''$.
Since $S_F^{\id}$ is the restriction of $S_{\nph_{\beta,\gamma}}$,
it possesses a weak* continuous extension to a completely bounded map on
$A\rtimes_{\alpha,\id}^{w^*} G$.

Let $a\in \cl A$ and $s\in G$.
Note that, if $\beta_i^s\in L^{\infty}(G,A)$ is given by
$\beta_i^s(t) = \beta_i(s^{-1}t)$, then
$\lambda_s^{\id} \beta_i = \beta_i^s\lambda_s^{\id}.$
By Corollary \ref{c_weakst}, for almost all $s\in G$, we have
$$\pi^{\id}(F(s)(a)) = \left(\sum_{i=1}^\infty  \gamma_i^* \pi^{\id}(a) \lambda_s^{\id}\beta_i\right)(\lambda_s^{\id})^*
= \sum_{i=1}^\infty  \gamma_i^* \pi^{\id}(a)\beta_i^s, \  a\in A.$$
Therefore, if $\xi \in L^2(G,H)$ then, for almost all $s,t\in G$,
we have
\begin{eqnarray*}
\alpha_{t^{-1}}(F(s)(a))(\xi(t))
& = &
\left(\sum_{i=1}^\infty  \gamma_i^* \pi^{\id}(a) \beta_i^s\xi\right) (t)\\
& = &
\sum_{i=1}^\infty \gamma_i(t)^* \alpha_{t^{-1}}(a)\beta_i(s^{-1}t)(\xi(t)).
\end{eqnarray*}
A standard argument using the separability of $H$ now shows that,
for almost all $s,t\in G$, we have
$$\alpha_{t^{-1}}(F(s)(a)) = \sum_{i=1}^\infty \gamma_i(t)^* \alpha_{t^{-1}}(a)\beta_i(s^{-1}t),$$
and, since the series on the right hand side converges is norm,
\begin{equation}\label{eq_fsa}
\sum_{i=1}^\infty\alpha_t(\gamma_i^*(t))a\alpha_t(\beta_i(s^{-1}t)) = F(s)(a),
\end{equation}
{\it i.e.} $\nph_a(t,s^{-1}t)=F(s)(a)$ for almost all $s,t\in G$.
As the map  $(s,t)\mapsto (t,s^{-1}t)$ is continuous, bijective and  preserves null sets in both directions, we obtain
$\nph_a(s,t) = F(st^{-1})(a)$ for almost all $(s,t)\in G\times G$.
Hence, for each $r\in G$, $\nph_a(sr,tr)=\nph_a(s,t)$ almost everywhere on $G\times G$.

(ii)$\Rightarrow$(i)
As $\gamma(t), \beta(s) \in C_\infty(A)$ for all $(s,t)$, we have that
$$\nph_{\beta,\gamma}(s,t)(a)=\lim_{n\to\infty}\sum_{i=1}^n\gamma_i(t)^*a\beta_i(s)$$
in norm and, in particular,  $\nph_{\beta,\gamma}(s,t)(a)\in A$ for all $a\in A$. Hence
\begin{eqnarray*}
\alpha_t(\nph_{\beta,\gamma}(s,t)(\alpha_{t^{-1}}(a))
&=&\alpha_t\left(\lim_{n\to\infty}\sum_{i=1}^n\gamma_i(t)^*\alpha_{t^{-1}}(a)\beta_i(s)\right)\\
&=&\lim_{n\to\infty}
\sum_{i=1}^n\alpha_t(\gamma_i(t)^*)a\alpha_t(\beta_i(s))\\
& = &
\sum_{i=1}^\infty\alpha_t(\gamma_i(t))^*a\alpha_t(\beta_i(s))
= \nph_a(s,t).
\end{eqnarray*}
Thus $\cl T(\nph_{\beta,\gamma})(s,t)(a) = \nph_a(s,t)$ for all $s,t\in G$.
Fix $r\in G$ and let $\cl S\subseteq A$ be a countable dense subset.
Then, for every $a\in \cl S$ we have
\begin{eqnarray*}
\cl T(\nph_{\beta,\gamma})_r(s,t)(a) &=& \cl T(\nph_{\beta,\gamma})(sr,tr)(a) =
\nph_a(sr,tr) \\
&=& \nph_a(s,t) = \cl T(\nph_{\beta,\gamma})(s,t)(a),
\end{eqnarray*}
for almost all $(s,t)\in G\times G$.
It follows that there exists a set $E\subseteq G\times G$ whose complement is null,
such that $\cl T(\nph_{\beta,\gamma})_r(s,t)(a) = \cl T(\nph_{\beta,\gamma})(s,t)(a)$
for all $(s,t)$ and all $a\in \cl S$.
Fix $(s,t)\in E$. By the boundedness of the maps $\cl T(\nph_{\beta,\gamma})_r(s,t)$
and $\cl T(\nph_{\beta,\gamma})(s,t)$, we have that
$$\cl T(\nph_{\beta,\gamma})_r(s,t)(a) = \cl T(\nph_{\beta,\gamma})(s,t)(a),$$
for all $a\in A$. Thus, $\cl T(\nph_{\beta,\gamma})_r = \cl T(\nph_{\beta,\gamma})$ almost
everywhere, for all $r\in G$.
By Lemma \ref{l_clT}, Theorem \ref{th_invpart} and Proposition \ref{p_bc},
there exists $F\in \frak{S}(A,G,\alpha)$
such that $\cl N(F) = \nph_{\beta,\gamma}$ almost  everywhere.
\end{proof}

\subsection{Groupoid multipliers}\label{ss_gr}

In this subsection, we relate Herz-Schur multipliers to
the multipliers of the Fourier algebra of a groupoid. We refer the reader to
\cite{renault2} and \cite{rbook} for more details on the background, which we
now recall.

Let $G$ be a locally compact group
acting on a locally compact Hausdorff space $X$; thus,
we are given a map $X\times G\to X$, $(x,s)\to x s$,
jointly continuous and such that
$x (st) = (xs) t$ for all $x\in X$ and all $s,t\in G$.

The set $\cl G=X\times G$ is a groupoid,
where the set $\cl G^2$ of composable pairs is given by
$\cl G^2 = \{[(x_1,t_1),(x_2,t_2)]: x_2 = x_1t_1\}$,
and if $[(x_1,t_1),(x_2,t_2)] \in \cl G^2$, the product
$(x_1,t_1)\cdot (x_2,t_2)$ is defined to be $(x_1,t_1t_2)$,
while the inverse $(x,t)^{-1}$ of $(x,t)$ is defined to be $(xt, t^{-1})$.
The domain and range maps are given by
$$d((x,t)):=(x,t)^{-1}\cdot(x,t)=(xt,e), \ \ r((x,t)):=(x,t)\cdot(x,t)^{-1}=(x,e).$$
The unit space $\cl G_0$ of the groupoid,
which is by defnition equal to the common image of the maps $d$ and $r$,
can therefore be canonically identified with $X$.

Let $\lambda$ be the left Haar measure on $G$.
The groupoid $\cl G$ can be equipped with the Haar system
$\{\lambda^x:x\in X\}$, where $\lambda^x=\delta_x\times \lambda$ and
$\delta_x$ is the point mass at $x$.
The space $C_c(\cl G)$ of compactly supported continuous functions on $\cl G$
is a $*$-algebra with respect to the convolution product given by
$$(f\ast g)(x,t)=\int f(x,s)g(xs,s^{-1}t)ds,$$
and the involution given by $f^*(x,s)=\overline{f(xs,s^{-1})}$.
We equip $C_c(\cl G)$ with the norm
$$\|f\|_I=\text{max}\left\{\sup_{x\in X}\int|f|d\lambda^x, \sup_{x\in X}\int|f^*|d\lambda^x\right\}.$$
The completion of $C_c(\cl G)$ with respect to this norm is denoted by $L^I(\cl G)$,
and its enveloping $C^*$-algebra $C^*(\cl G)$ is called the \emph{groupoid $C^*$-algebra} of $\cl G$.

Let $A=C_0(X)$ and $\alpha_t(a)(x)=a(xt)$, $t\in G$, $x\in X$.
Then $\alpha:t\mapsto \alpha_t$ is a continuous homomorphism  from $G$ to ${\rm Aut}(A)$.
Identifying $C_c(X\times G)=C_c(\cl G)$ with a subspace of $C_c(G,A)$,
we see that the $*$-algebra structure on $C_c(G,A)$, associated with the action $\alpha$
(see the beginning of Section \ref{s_ggamma}),
coincides with the one on
$C_c(\cl G)$ except for the absence of the modular function in the definition of the involution.
However, the $C^*$-algebras $C^*(\cl G)$ and the
full crossed product $A\rtimes_{\alpha} G$ are isomorphic {\it via}
the map $\phi$ given by $\phi(f)(x,s)=\Delta^{-1/2}(s)f(x,s)$, $f\in C_c(X\times G)$.
In fact,
for $f,g\in C_c(X\times G)$, we have
\begin{eqnarray*}
\phi(f\ast g)(x,s)&=&\Delta^{-1/2}(s)(f\ast g)(x,s)=\Delta^{-1/2}(s)\int f(x,t)g(xt, t^{-1}s)dt,
\end{eqnarray*}
while
\begin{eqnarray*}
\phi(f)\times\phi(g)(x,s)&=&\int\phi(f)(x,t)\phi(g)(xt,t^{-1}s)dt\\&=&\int\Delta^{-1/2}(t)f(x,t)\Delta^{-1/2}(t^{-1}s)g(xt,t^{-1}s)dt\\
&=&\Delta^{-1/2}(s)\int f(x,t)g(xt,t^{-1}s)dt;
\end{eqnarray*}
hence, $\phi(f\ast g)=\phi(f)\times\phi(g)$.
In addition,
$$\phi(f^*)(x,s)=\Delta^{-1/2}(s)\overline{f(xs,s^{-1})},$$
while
\begin{eqnarray*}
\phi(f)^*(x,s)&=&\Delta^{-1}(s)\overline{\phi(f)(xs,s^{-1})}=\Delta^{-1}(s)\Delta^{-1/2}(s^{-1})\overline{f(xs,s^{-1})}\\
&=&
\Delta^{-1/2}(s)\overline{f(xs,s^{-1})},
\end{eqnarray*}
giving $\phi(f)^*=\phi(f^*)$.
By \cite[p. 9]{renault2}, the map $\phi$ extends to a
*-isomorphism from $C^*(\cl G)$ onto $A\rtimes_{\alpha} G$.

Let $\mu$ be a measure on $X$ and $\nu=\mu\times\lambda$; thus,
for a measurable subset $E$ of $X\times G$, we have
$\nu(E) = \int\lambda^x(E^x)d\mu(x)$
(for $x\in X$, we have set $E^x = E \cap (\{x\}\times G)$).
For a measurable subset $E$, set $\nu^{-1}(E)=\int\lambda^x((E^{-1})^x)d\mu(x)$.

Let $\text{Ind}(\mu)$ be the *-representation of $C_c(\cl G)$ on
$L^2(\cl G,\nu^{-1})$
given by
$$(\text{Ind}(\mu)(f)\xi)(x,t)=\int f(x,s)\xi(xs,s^{-1}t)ds, \ \ \ f\in C_c(\cl G).$$
One can check that $\|\text{Ind}(\mu)(f)\|\leq \|f\|_{I}$ \cite{renault2};
hence $\text{Ind}(\mu)$ can be extended to $C^*(\cl G)$.

If $\supp\mu=X$ then the map $f\mapsto M_f$,
where $M_f$ is the operator of  multiplication by $f$ on $L^2(X,\mu)$,
is a faithful *-representation $\theta$ of $C_0(X)$.
The corresponding regular representation $\pi^{\theta}\rtimes\lambda^\theta$
of the crossed product $A\rtimes_{\alpha} G$ on
$L^2(G,L^2(X,\mu))=L^2(X\times G,\mu\times\lambda)$ is given by
$$(\pi^{\theta}\rtimes\lambda^\theta)(f)\xi(x,t)=\int\alpha_{t^{-1}}(f(x,s))\xi(x,s^{-1}t)ds=\int f(xt^{-1},s)\xi(x,s^{-1}t)ds,$$
for $f\in C_c(X\times G)$ and $\xi\in L^2(X\times G, \mu\times\lambda)$.
Let $J\xi(x,t)=\xi(xt,t^{-1})$; then $J$ is a unitary operator from $L^2(\cl G,\nu)$ to $L^2(\cl G,\nu^{-1})$
with $J^{-1}\eta(x,t)=\eta(xt,t^{-1})$.
Let also $U\xi(x,t)=\Delta^{-1/2}(t)\xi(x,t^{-1})$; thus, $U$ is
a unitary operator on $L^2(\cl G,\nu)$. We have
$$(U^{-1}J^{-1}\text{Ind}(\mu)(f)JU\xi)(x,t)=\int f(xt^{-1},s)\xi(x,s^{-1}t)\Delta^{-1/2}(s)ds;$$
we thus see that $(\pi^{\theta}\rtimes\lambda^\theta)\circ\phi$ is unitarily equivalent to $\text{Ind}(\mu)$.

Let $I$ be the intersection of the kernels of $\text{Ind}(\mu)$ as $\mu$
varies over the measures of $X$.
The quotient $C^*(\cl G)/I$ is called the
reduced $C^*$-algebra of $\cl G$ and denoted by $C^*_{\rm red}(\cl G)$. It follows from
\cite[Proposition 2.17]{renault2} that $\text{Ind}(\mu)$ is a faithful representation of
$C^*_{\rm red}(\cl G)$ if $\supp\mu=X$. Therefore $C_{\rm red}^*(\cl G)$
is isomorphic to the reduced crossed product $C^*$-algebra of the $C^*$-dynamical system $(C_0(X), G, \alpha)$.

A measure $\mu$ on $X$ is called quasi-invariant if the measures
$\nu$ and $\nu^{-1}$ are equivalent.
It is known that $\mu$ is quasi-invariant if and only if
the measures $\mu$ and $\mu\cdot s$ are equivalent
for any $s\in G$ (here  $\mu\cdot s(E)=\mu(Es^{-1})$).
If $\delta(\cdot,s)$ is the Radon-Nikodym derivative $d(\mu\cdot s^{-1})/d\mu$
and $D$ is the Radon-Nikodym derivative $d\nu/d\nu^{-1}$ then $D(x,s)=\Delta(s)/\delta(x,s)$,
$x\in X$, $s\in G$ (see \cite[Chapter I, 3.21]{rbook}).
In what follows we will asume that $X$ possesses a quasi-invariant measure $\mu$ such that $\supp\mu=X$.

The groupoid $\cl G$ equipped with such a measure $\mu$ is called a
{\it measured groupoid} \cite{rbook}.
Next we would like to point out a connection between its multipliers, studied in \cite{renault},  and
Herz-Schur $(C_0(X),G,\alpha)$-multipliers.

The Hilbert space
$L^2(\cl G,\nu)$ carries a representation $\text{Reg}$ of $C_c(\cl G)$ defined by
$$(\text{Reg}(f)\xi)(x,s)=\int f(x,t)\xi(xt,t^{-1}s)D^{-1/2}(x,t)dt,$$
and unitarily equivalent to $\text{Ind}(\mu)$
{\it via} the unitary operator $V$ from $L^2(\cl G,\nu)$ to $L^2(\cl G,\nu^{-1})$ given by
$V\xi=D^{1/2}\xi$.
The von Neumann algebra $\text{VN}(\cl G)$ of $\cl G$
is defined to be the bicommutant $\text{Reg}(C_c(\cl G))''$ \cite[2.1]{renault}.

The Fourier algebra $A(\cl G)$ of the measured groupoid $\cl G$
was defined in \cite{renault} and is, similarly to the case where $\cl G$
is a group, a Banach algebra of complex-valued continuous functions on $\cl G$.
By \cite[Propsition 3.1]{renault}, the operator $M_\varphi$ of multiplication by
$\varphi\in L^\infty(\cl G)$ is a bounded linear map on $A(\cl G)$
if and only if the map $\text{Reg}(f)\to \text{Reg}(\varphi f)$, $f\in C_c(\cl G)$, is bounded.
The function $\varphi$ is in this case called a multiplier of $A(\cl G)$.
If the map $M_\varphi$ is moreover completely bounded then
$\varphi$ is called a completely bounded multiplier of $A(\cl G)$.
Following \cite{renault}, we denote by $MA(\cl G)$ (resp. $M_0A(\cl G)$)
the set of all multipliers (resp. completely bounded multipliers) of $A(\cl G)$.

For a bounded continuous function $\varphi : X\times G\to \bb{C}$ and $t\in G$, let
$F_\varphi(t)$ be the linear map on $C_0(X)$ given by
$F_\varphi(t)(a)(x)=\varphi(x,t)a(x)$, $a\in C_0(X)$, $x\in X$.

\begin{proposition}\label{groupoid}
Let $\varphi:X\times G \to \bb{C}$ be a bounded continuous function.
Then

(i) \ the map $F_\varphi$ is a $\theta$-multiplier if and only if $\varphi$ is a multiplier of $A(\cl G)$;

(ii) the map $F_\varphi$ is a Herz-Schur $(C_0(X),G,\alpha)$-multiplier if and only if $\varphi$ is a
completely bounded multiplier of $A(\cl G)$.
\end{proposition}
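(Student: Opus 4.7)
The plan is to transfer everything through the $*$-isomorphism $\phi\colon C^*(\cl G)\to C_0(X)\rtimes_\alpha G$ and the unitary equivalences $\text{Reg}\cong\text{Ind}(\mu)\cong(\pi^\theta\rtimes\lambda^\theta)\circ\phi$ that were established earlier in the subsection. The core computational step will be the following intertwining identity: for $f\in C_c(\cl G)$, viewed simultaneously as an element of $C_c(\cl G)$ and of $C_c(G,A)$,
\[\phi(\varphi\cdot f)=F_\varphi\cdot\phi(f),\]
where $\varphi\cdot f$ denotes the pointwise product on $X\times G$. Indeed, evaluating both sides at $(x,s)$ gives $\Delta^{-1/2}(s)\varphi(x,s)f(x,s)$, since $\phi$ multiplies by $\Delta^{-1/2}(s)$ and the left $C_0(X)$-action of $F_\varphi(s)$ is pointwise multiplication by $\varphi(\cdot,s)$.

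Using this identity together with the composite unitary equivalence, $C_0(X)\rtimes_{\alpha,\theta}G$ is identified with $C^*_{\rm red}(\cl G)$ and its weak* closure $C_0(X)\rtimes_{\alpha,\theta}^{w^*}G$ with $\text{VN}(\cl G)$, in such a way that the partially defined map $\pi^\theta(a)\lambda^\theta_t\mapsto \pi^\theta(F_\varphi(t)(a))\lambda^\theta_t$ on $\phi(C_c(\cl G))\subseteq C_0(X)\rtimes_{\alpha,\theta}G$ corresponds to the partially defined map $\text{Reg}(f)\mapsto \text{Reg}(\varphi f)$ on $\text{Reg}(C_c(\cl G))\subseteq\text{VN}(\cl G)$.

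For (i), I would then invoke \cite[Proposition~3.1]{renault}: $\varphi$ lies in $MA(\cl G)$ precisely when the multiplication operator $M_\varphi$ is bounded on $A(\cl G)$, in which case its Banach-space dual $M_\varphi^*$ is a bounded weak*-continuous map on $\text{VN}(\cl G)$ extending $\text{Reg}(f)\mapsto\text{Reg}(\varphi f)$. Under the identification above this extension is exactly a bounded weak*-continuous extension of $\Phi_{F_\varphi}^\theta$ to $C_0(X)\rtimes_{\alpha,\theta}^{w^*}G$, which is the defining property of a $\theta$-multiplier. For (ii), the same argument with complete boundedness shows that $\varphi\in M_0A(\cl G)$ iff $F_\varphi$ is a Herz-Schur $\theta$-multiplier; to conclude with Herz-Schur $(C_0(X),G,\alpha)$-multiplier, I would apply Remark \ref{herz_schur_theta}, noting that the joint continuity of $\varphi$ and the formula $F_\varphi(s)(a)(x)=\varphi(x,s)a(x)$ make $s\mapsto \pi^\theta(F_\varphi(s)(a))\lambda^\theta_s$ weak*-continuous, so $F_\varphi$ is $(\pi^\theta,\lambda^\theta)$-fiber continuous and the two notions agree.

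The main obstacle is not the equivalences themselves, which are essentially bookkeeping, but a careful statement of the intertwining (so that the weak*-continuous extensions really do correspond under the three successive unitary equivalences $\phi$, $JU$, and $V$) and the explicit verification of fiber continuity for $F_\varphi$ so that Remark \ref{herz_schur_theta} applies and the Herz-Schur condition upgrades correctly.
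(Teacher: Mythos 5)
Your proposal follows essentially the same route as the paper: the paper's own proof is a one-line appeal to the preceding paragraphs (the $*$-isomorphism $\phi$, the unitary equivalences $\mathrm{Reg}\cong\mathrm{Ind}(\mu)\cong(\pi^\theta\rtimes\lambda^\theta)\circ\phi$), Remark \ref{herz_schur_ind}(iii), the definitions, and the isometry $\|\mathrm{Reg}(f)\|=\|(\pi^{\theta}\rtimes\lambda^\theta)(\phi(f))\|$, and your intertwining identity $\phi(\varphi\cdot f)=F_\varphi\cdot\phi(f)$ is exactly the computation that makes this work.

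One small caution on part (ii): Remark \ref{herz_schur_theta} does not say that, under fiber continuity, Herz-Schur $\theta$-multipliers and Herz-Schur $(C_0(X),G,\alpha)$-multipliers coincide; it says the former are the latter \emph{together with} a weak*-continuous extension of $S_F^\theta$. So your detour through Herz-Schur $\theta$-multipliers leaves the implication ``Herz-Schur multiplier $\Rightarrow$ cb multiplier of $A(\cl G)$'' needing the existence of that extension. This is fillable with the tools you already invoke (Renault's Proposition 3.1 converts boundedness of $f\mapsto\varphi f$ on $\mathrm{Reg}(C_c(\cl G))$ into boundedness of $M_\varphi$ on $A(\cl G)$, whose adjoint is precisely the weak*-continuous extension), but it is cleaner, and closer to the paper, to prove (ii) entirely at the level of the norm-dense subalgebras: $F_\varphi$ is a Herz-Schur $(C_0(X),G,\alpha)$-multiplier iff, by Remark \ref{herz_schur_ind}(iii) and the unitary equivalence, the map $\mathrm{Reg}(f)\mapsto\mathrm{Reg}(\varphi f)$ is completely bounded, iff $\varphi\in M_0A(\cl G)$ --- no weak* topology required for this part. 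The weak*-continuity bookkeeping is only needed for part (i), where your use of the dual $M_\varphi^*$ on $\mathrm{VN}(\cl G)$ is the right mechanism.
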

\begin{proof}
Both statements follow from the previous paragraphs,
Remark \ref{herz_schur_ind} (iii), the definition of
(Herz-Schur) $\theta$-multipliers and the fact
that $\|\text{Reg}(f)\|=\|(\pi^{\theta}\rtimes\lambda^\theta)(\phi(f))\|$, $f\in C_c(\cl G)$.
\end{proof}

The following statement gives the result of \cite[Proposition 3.8]{renault} in case $G$ is a locally compact
second countable group.

\begin{corollary}
Let $\theta:G\to \mathbb C$ be a bounded continuous function and
$\varphi : X\times G\to \bb{C}$ be the function given by
$\varphi(x,t) = \theta(t)$. Then $\varphi$ is a completely bounded multiplier of $A(\cl G)$
if and only if $\theta\in M^{\rm cb}A(G)$.
\end{corollary}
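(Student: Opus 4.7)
The plan is to recognize that this corollary follows directly by chaining together Proposition \ref{groupoid}(ii) with Proposition \ref{p_cbcl}, applied to the dynamical system $(C_0(X),G,\alpha)$ with the action $\alpha_t(a)(x) = a(xt)$.

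First I would observe that, since $\varphi(x,t) = \theta(t)$ does not depend on $x$, the associated family $F_\varphi$ of maps on $C_0(X)$ satisfies
\[
F_\varphi(t)(a)(x) = \varphi(x,t)a(x) = \theta(t) a(x), \qquad a\in C_0(X),\ x\in X,
\]
so $F_\varphi(t)(a) = \theta(t) a$ for every $t\in G$ and every $a\in C_0(X)$. In the notation of Proposition \ref{p_cbcl} (applied with $A = C_0(X)$), this means $F_\varphi$ coincides with the map $F_\theta$. Since $\theta$ is assumed bounded and continuous, Proposition \ref{p_cbcl} applies and gives the equivalence
\[
F_\varphi \in \frak{S}(C_0(X),G,\alpha) \ \ \Longleftrightarrow\ \ \theta \in M^{\rm cb}A(G).
\]

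Next, by Proposition \ref{groupoid}(ii), since $\varphi$ is a bounded continuous function on $X\times G$,
\[
\varphi \in M_0 A(\cl G)\ \ \Longleftrightarrow\ \ F_\varphi\ \text{is a Herz-Schur}\ (C_0(X),G,\alpha)\text{-multiplier}.
\]
Chaining the two equivalences produces $\varphi \in M_0A(\cl G)$ if and only if $\theta \in M^{\rm cb}A(G)$, which is exactly the claim.

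There is really no obstacle here: all the technical work has been absorbed into Propositions \ref{p_cbcl} and \ref{groupoid}, and once the identification $F_\varphi = F_\theta$ is made, the statement is a one-line consequence of combining them. The only point worth pausing at is to confirm that the hypotheses of Proposition \ref{p_cbcl} (namely, that $\theta$ is bounded continuous and that $A = C_0(X)$ is a separable $C^*$-algebra on which $G$ acts continuously) are in force, and that the groupoid $\cl G = X\times G$ with its associated measured structure falls within the framework set up before Proposition \ref{groupoid}; both are immediate from the standing assumptions of the subsection.
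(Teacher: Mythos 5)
Your proof is correct and follows exactly the route the paper takes: the paper's own proof is the one-line remark that the statement follows from Proposition \ref{groupoid} and Proposition \ref{p_cbcl}, and you have simply spelled out the identification $F_\varphi = F_\theta$ and the chaining of the two equivalences. Nothing is missing.
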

\begin{proof}
The statement follows from Proposition \ref{groupoid} and Proposition 4.1.
\end{proof}

The next corollary provides a new description of the completely bounded multipliers
of $A(\cl G)$.
We write $H = L^2(X,\mu)$.

\begin{corollary}
Let $\nph:X\times G\to\mathbb C$ be a bounded continuous function.
Assume that $\nph$ is a completely bounded multiplier of $A(\cl G)$.
Then there exist a separable Hilbert space $K$ and functions
$V,W\in L^{\infty}(G,\cl B(H,K))$ such that, for almost all $s,t\in G$, we have that
$W^*(t)V(s)\in\cl D_X$ and
$\nph(xt^{-1}, ts^{-1})=(W^*(t)V(s))(x)$, for almost all $x\in X$.
\end{corollary}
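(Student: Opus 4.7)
The plan is to convert the given completely bounded multiplier of $A(\cl G)$ into a Herz-Schur multiplier of the dynamical system, apply the transference theorem to produce a Schur multiplier, apply the representation result for Schur $A$-multipliers, and finally evaluate at a bounded approximate identity to obtain the scalar-valued identity.

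First I would invoke Proposition \ref{groupoid}(ii) to conclude that the map $F_\nph : G\to CB(C_0(X))$, $F_\nph(t)(a)(x)=\nph(x,t)a(x)$, is a Herz-Schur $(C_0(X),G,\alpha)$-multiplier. Next, by Theorem \ref{th_tr}, the map $\cl N(F_\nph) : G\times G \to CB(C_0(X))$ is a Schur $C_0(X)$-multiplier. A direct calculation, using $\alpha_t(a)(x) = a(xt)$, yields
$$\cl N(F_\nph)(s,t)(a)(x) = \alpha_{t^{-1}}\!\bigl(F_\nph(ts^{-1})(\alpha_t(a))\bigr)(x) = \nph(xt^{-1},ts^{-1})\,a(x),$$
so that, viewing $C_0(X)$ as multiplication operators on $H=L^2(X,\mu)$, $\cl N(F_\nph)(s,t)(a)$ equals the diagonal operator $M_{\nph(\cdot\, t^{-1},ts^{-1})\,a}\in\cl D_X$.

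Next I would apply Theorem \ref{th_chschura} to $\cl N(F_\nph)$ to obtain a separable Hilbert space $K$, a non-degenerate $*$-representation $\rho : C_0(X)\to\cl B(K)$, and functions $V,W\in L^\infty(G,\cl B(H,K))$ such that, for every $a\in C_0(X)$,
$$M_{\nph(\cdot\, t^{-1},ts^{-1})\,a} \;=\; W(t)^*\rho(a)V(s) \quad\text{for almost all }(s,t)\in G\times G.$$
Since $C_0(X)$ is separable (because $X$ is second countable), I can choose a countable bounded approximate identity $(e_n)_{n\in\bb{N}}\subseteq C_0(X)$ and a single conull set $E\subseteq G\times G$ outside of which the above identity holds simultaneously for every $e_n$.

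Finally, fix $(s,t)\in E$. The non-degeneracy of $\rho$ gives $\rho(e_n)\to I_K$ strongly, so $W(t)^*\rho(e_n)V(s)\to W(t)^*V(s)$ in SOT. On the multiplication side, $M_{e_n}\to I_H$ strongly on $L^2(X,\mu)$, hence $M_{\nph(\cdot\, t^{-1},ts^{-1})\,e_n}\to M_{\nph(\cdot\, t^{-1},ts^{-1})}$ in SOT. Comparing the two limits gives $W(t)^*V(s)=M_{\nph(\cdot\, t^{-1},ts^{-1})}\in\cl D_X$, which is exactly the required identity. The main technical point is reconciling the per-$a$ almost-everywhere clauses from Theorem \ref{th_chschura} with the passage to the limit in $a$; separability of $C_0(X)$, which provides a single conull set valid for all $e_n$, resolves this cleanly. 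An alternative finish would use that $\rho$ extends weak*-continuously to $C_0(X)^{**}$ and evaluate at the identity, bypassing the approximate identity altogether.
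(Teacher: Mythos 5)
Your proposal is correct and follows essentially the same route as the paper: Proposition \ref{groupoid} to pass to a Herz-Schur $(C_0(X),G,\alpha)$-multiplier, Theorem \ref{th_tr} for transference, the computation $\cl N(F_\nph)(s,t)(a)(x)=\nph(xt^{-1},ts^{-1})a(x)$, Theorem \ref{th_chschura} for the representation, and an approximate identity to conclude. Your extra care in fixing a single conull set valid for all terms of a countable approximate identity is a point the paper leaves implicit, but the argument is the same.
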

\begin{proof}
By Proposition \ref{groupoid}, $F=F_\nph$ is a Herz-Schur $(C_0(X),G,\alpha)$-multiplier.
By Theorem \ref{th_tr}, $\cl N(F)$ is a Schur  $C_0(X)$-multiplier.
We have
\begin{equation}\label{eq_clN}
\cl N(F)(s,t)(a)(x) = \varphi(xt^{-1},ts^{-1})a(x).
\end{equation}
Hence there exist a separable Hilbert space $K$, a non-degenerate $*$-represen-tation $\rho : C_0(X)\to \cl B(K)$ and
functions $V,W\in L^{\infty}(G,\cl B(H,K))$ such that
$$\varphi(xt^{-1},ts^{-1})a(x)\xi(x) = W^*(t)\rho(a)V(s)\xi(x), \ \ a\in C_0(X), \xi\in L^2(X,\mu).$$
Taking an approximate unit $(a_n)_{n\in\mathbb N}$ of $C_0(X)$ in
(\ref{eq_clN}) and letting $n\to\infty$, we obtain the statement.
\end{proof}

\section{Convolution multipliers}\label{s_cm}

{\it Throughout this section, we will assume that $G$ is an abelian locally compact group,
and will write the group operations additively.}
Let $A$ be a separable C*-algebra and $(A,G,\alpha)$ be a C*-dynamical system.
For a measure $\mu\in M(G)$,
let $\alpha_{\mu} : A\to A$ be the completely bounded map given by
$\alpha_{\mu}(a) = \int_G \alpha_r(a)d\mu(r)$ (see \cite{ss},\cite{stormer}).

\begin{definition}\label{d_cm}
A family $\Lambda = (\mu_t)_{t\in G}$, where $\mu_t\in M(G)$, $t\in G$, will be called a
\emph{convolution $(A,G,\alpha)$-multiplier} (or simply a \emph{convolution multiplier}),
if the map $F_{\Lambda} : G\to CB(A)$ given by $F_{\Lambda}(t) = \alpha_{\mu_t}$, $t\in G$,
is a Herz-Schur $(A,G,\alpha)$-multiplier.
\end{definition}

For a convolution multiplier $\Lambda = (\mu_t)_{t\in G}$, we let $\|\Lambda\|_{\mm} = \|F_{\Lambda}\|_{\mm}$.
Since $G$ is assumed to be abelian, we have that
$\alpha_\mu\circ\alpha_r=\alpha_r\circ\alpha_\mu$ for every $r\in G$
and every $\mu\in M(G)$.
It is well-known that in this case
the map $\alpha_\mu:A\to A$ lifts to a completely bounded map on th the crossed product;
the following proposition provides a concrete route to this fact.

\begin{proposition}\label{p_conmu}
Let $\mu\in M(G)$, $\mu_t = \mu$ for every $t\in G$, and $\Lambda = (\mu_t)_{t\in G}$.
Then $\Lambda$ is a convolution multiplier and $\|\Lambda\|_{\mm} \leq \|\mu\|$.
\end{proposition}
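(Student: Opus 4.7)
The plan is to invoke Theorem \ref{th_tr} and reduce the claim to showing that the associated function $\cl N(F_\Lambda) : G \times G \to CB(A)$ is a Schur $A$-multiplier of norm at most $\|\mu\|$. First I would compute $\cl N(F_\Lambda)$ explicitly: by definition,
\[
\cl N(F_\Lambda)(s,t)(a) = \alpha_{-t}\bigl(F_\Lambda(t-s)(\alpha_t(a))\bigr) = \alpha_{-t}\bigl(\alpha_\mu(\alpha_t(a))\bigr).
\]
Since $G$ is abelian, for each fixed $r\in G$ the automorphism $\alpha_r$ commutes with $\alpha_t$ for every $t\in G$; integrating against $\mu$, the map $\alpha_\mu$ commutes with every $\alpha_t$. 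Hence $\alpha_{-t}\circ\alpha_\mu\circ\alpha_t = \alpha_\mu$, so $\cl N(F_\Lambda)$ is the constant function with value $\alpha_\mu \in CB(A)$.

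Next I would check directly that a constant function $\nph(s,t) \equiv \Psi$, with $\Psi\in CB(A)$, is a Schur $A$-multiplier, and that $\|\nph\|_{\frak S}\le \|\Psi\|_{\cb}$. For an elementary tensor $k = h\otimes a$ with $h\in L^2(G\times G)$ and $a\in A$ one has $(\nph\cdot k)(y,x) = h(y,x)\Psi(a)$, so by (\ref{eq_wta}),
\[
S_\nph(T_h\otimes a) \;=\; T_h\otimes \Psi(a) \;=\; (\id_{\cl K}\otimes\Psi)(T_h\otimes a).
\]
By linearity and norm continuity (using the density of $\cl S_2\odot A$ in $\cl K\otimes A$ and Lemma \ref{l_Tk}), $S_\nph$ is the restriction to $\cl K\otimes A$ of $\id_{\cl K}\otimes \Psi$; in particular $S_\nph$ is completely bounded with $\|S_\nph\|_{\cb}\le \|\Psi\|_{\cb}$.

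It remains to bound $\|\alpha_\mu\|_{\cb}$ by $\|\mu\|$. Since each $\alpha_r$ is a $*$-automorphism of $A$, its complete bound norm is $1$, and $\alpha_\mu(a)=\int_G \alpha_r(a)\,d\mu(r)$ is a weak integral of completely contractive maps against the scalar measure $\mu$; a standard estimate on matrix amplifications gives $\|\alpha_\mu\|_{\cb}\le \|\mu\|$. Combining the three steps, $\cl N(F_\Lambda)\in \frak S(G,G;A)$ with $\|\cl N(F_\Lambda)\|_{\frak S}\le \|\mu\|$, so by Theorem \ref{th_tr}, $F_\Lambda$ is a Herz-Schur $(A,G,\alpha)$-multiplier and $\|\Lambda\|_{\mm}=\|F_\Lambda\|_{\mm}=\|\cl N(F_\Lambda)\|_{\frak S}\le \|\mu\|$. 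The only mildly delicate point is justifying that the constant-valued Schur multiplier really does yield the ampliation $\id_{\cl K}\otimes \alpha_\mu$, which follows from the approximation argument of Proposition \ref{p_indep} applied on elementary tensors.
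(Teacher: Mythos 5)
Your proof is correct, but it takes a genuinely different route from the paper's. The paper argues directly at the level of the crossed product: using $\pi(\alpha_r(a))=\lambda_r\pi(a)\lambda_r^*$ and the commutativity of $G$ (so that $\lambda_r^*$ slides past $\lambda_s$), it rewrites $S_{F_\Lambda}((\pi\rtimes\lambda)(f))$ as $\int_G\lambda_r\,(\pi\rtimes\lambda)(f)\,\lambda_r^*\,d\mu(r)$, i.e.\ exhibits $S_{F_\Lambda}$ as the restriction of the map $T\mapsto\int_G\lambda_rT\lambda_r^*\,d\mu(r)$ on all of $\cl B(L^2(G,H))$, which is completely bounded with cb-norm at most $\|\mu\|$ by \cite{ss}; this gives the weak*-continuous global extension for free and uses no transference. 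You instead route through Theorem \ref{th_tr}: you correctly observe that abelianness forces $\alpha_{-t}\circ\alpha_\mu\circ\alpha_t=\alpha_\mu$, so $\cl N(F_\Lambda)$ is the constant function $\alpha_\mu$, and then that a constant $CB(A)$-valued Schur multiplier is just the ampliation $\id_{\cl K}\otimes\alpha_\mu$ (your density/approximation argument for this, and the bound $\|\alpha_\mu\|_{\cb}\le\|\mu\|$, are both fine — indeed the latter is already built into the paper's definition of $\alpha_\mu$). The trade-off: your argument leans on the transference theorem, whose proof in Section 3 carries the standing second-countability hypothesis on $G$ (needed for $(G,m)$ to be a standard measure space), whereas the paper's direct computation is independent of that machinery; on the other hand, your identification of $\cl N(F_\Lambda)$ as a constant function, together with the ampliation description $S_{\cl N(F_\Lambda)}=(\id\otimes\alpha_\mu)|_{\cl K\otimes A}$, makes the invariance of this Schur multiplier and its weak* extendability transparent, in the spirit of Propositions \ref{p_bc} and \ref{p_cbcl}.
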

\begin{proof}
Note that $\pi(\alpha_r(a)) = \lambda_{r}\pi(a)\lambda_{r}^*$, $r\in G$.
Set $F = F_{\Lambda}$. If $a\in A$ then
$$\pi(F(s)(a)) = \pi(\alpha_{\mu}(a)) = \int_G \pi(\alpha_r(a))d\mu(r)
= \int_G  \lambda_{r}\pi(a)\lambda_{r}^* d\mu(r).$$
Hence, if $f\in L^1(G,A)$ then
\begin{eqnarray*}
S_{F}((\pi\rtimes\lambda)(f))
& = &
\int_G\pi(F(s)(f(s))\lambda_s  ds\\
& = &
\int_G\left(\int_G \lambda_r\pi(f(s))\lambda_r^* d\mu(r)\right)\lambda_sds\\
& = &
\int_G\int_G \lambda_r\pi(f(s))\lambda_s \lambda_r^* ds d\mu(r)\\
& = &
\int_G\lambda_r \left(\int_G \pi(f(s))\lambda_s ds \right) \lambda_r^* d\mu(r).
\end{eqnarray*}

The claims follow from the fact that the mapping $T\mapsto \int\lambda_rT\lambda_r^*d\mu(r)$ is a completely bounded map on $\cl B(L^2(G,H))$ with completely bounded norm dominated by $\|\mu\|$ (see \cite{ss}).
\end{proof}

In this section, we will be concerned with a special class of convolution multipliers,
which we now define.
Let $\Gamma$ be the dual group of $G$. The C*-algebra
$C^*(\Gamma)$ of $\Gamma$ is canonically *-isomorphic to its reduced C*-algebra
$C_r^*(\Gamma)$ (see {\it e.g.} \cite[Theorem 7.3.9]{ped2}).
We let $\theta : C^*(\Gamma)\to \cl B(L^2(\Gamma))$ be the associated (faithful) *-representation.
An element $s\in G$ will be viewed as a character (and, in particular, a unimodular function)
on $\Gamma$.
For $s\in G$, let
$\alpha_s : \lambda^{\Gamma}(L^1(\Gamma)) \to \lambda^{\Gamma}(L^1(\Gamma))$
be the map given by
$$\alpha_s(\lambda^{\Gamma}(f)) = \lambda^{\Gamma}(sf), \ \ f\in L^1(\Gamma).$$
Note that, if $f\in L^1(\Gamma)$, $s\in G$ and $x\in \Gamma$, then
\begin{equation}\label{eq_ano}
\alpha_s(\lambda^{\Gamma}(f)) =  M_s\lambda^{\Gamma}(f) M_{-s}.
\end{equation}
Thus, $\alpha_s$ extends canonically to an automorphism of $C_r^*(\Gamma)$.
By abuse of notation, we consider $\alpha_s$ as an automorphism of $C^*(\Gamma)$;
thus, $(C^*(\Gamma),G,\alpha)$ is a C*-dynamical system.
By (\ref{eq_ano}), $\alpha$ is a $\theta$-action.
Moreover, by \cite[Theorem 7.7.7]{ped2},
$C^*(\Gamma)\rtimes_{\alpha} G$ is *-isomorphic to the
C*-subalgebra $C^*(\Gamma)\rtimes_{\alpha,\theta} G$ of
$\cl B(L^2(G\times\Gamma))$.

Given a bounded measurable function
$\psi : G\times\Gamma\to\bb{C}$ and $t\in G$ (resp. $x\in \Gamma$),
let the function $\psi_t : \Gamma\to\bb{C}$ (resp. $\psi^x : G\to\bb{C}$)
given by $\psi_t(y) = \psi(t,y)$ (resp. $\psi^x(s) = \psi(s,x)$).
We call $\psi$ \emph{admissible} if
$\psi_t\in B(\Gamma)$ for every $t\in G$ and $\sup_t \|\psi_t\|_{B(\Gamma)} < \infty$.
Assuming that $\psi$ is addmissible, let
$F_\psi(t) : C_r^*(\Gamma)\to C_r^*(\Gamma)$ be the map given by
$$F_\psi(t)(\lambda^\Gamma(g))=\lambda^\Gamma(\psi_t g), \ \ \  g\in L^1(\Gamma).$$
By abuse of notation, we consider $F_\psi(t)$ as a map on $C^*(\Gamma)$.
Set
\begin{eqnarray*}
\frak{F}(G) = \{\psi : G\times \Gamma \to \bb{C} \ : \psi \text{ is admissible and}\\
 F_\psi  \mbox{ is a Herz-Schur $(C^*(\Gamma),G,\alpha)$-multiplier}\}
 \end{eqnarray*}
and
\begin{eqnarray*}
\frak{F}_{\theta}(G) =
\{\psi : G\times \Gamma \to \bb{C} \ : \psi \text{ is admissible and}\\F_{\psi} \mbox{ is a Herz-Schur } \theta\mbox{-multiplier}\}.
\end{eqnarray*}
Clearly, the space $\frak{F}(G)$ is an algebra with respect to the operations of pointwise addition and
multiplication, and $\frak{F}_{\theta}(G)$ is a subalgebra of $\frak{F}(G)$.
For $\psi\in \frak{F}(G)$, let $\|\psi\|_{\mm} = \|F_{\psi}\|_{\mm}$,
and use $S_{\psi}$ to denote the map $S_{F_{\psi}}$.

For $\mu\in M(G)$, set $\check{\mu}(x) = \int_G \langle x,s\rangle d\mu(s)$, $x\in \Gamma$.

\begin{proposition}\label{p_conv}
Let $\psi:G\times\Gamma\to\bb{C}$ be an addmissible function.
The following are equivalent:

(i) \ $\psi\in \frak{F}(G)$;

(ii) for each $t\in G$, there exists $\mu_t\in M(G)$ such that
$\psi(t,x)=\check{\mu}_t(x)$, $t\in G$, $x\in\Gamma$, and
the family $(\mu_t)_{t\in G}$ is a convolution $(C^*(\Gamma), G,\alpha)$-multiplier.
\end{proposition}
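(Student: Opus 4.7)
The plan is to reduce both (i) and (ii) to the same assertion about the map $F_\psi$, via classical Fourier--Stieltjes duality. First I would invoke the fact that, since $G$ is locally compact abelian with dual group $\Gamma$, the inverse Fourier--Stieltjes transform $\mu \mapsto \check\mu$ is an isometric isomorphism of $M(G)$ onto $B(\Gamma)$. Thus the admissibility hypothesis on $\psi$ (namely $\psi_t \in B(\Gamma)$ with $\sup_t \|\psi_t\|_{B(\Gamma)} < \infty$) is equivalent to the existence, for each $t \in G$, of a measure $\mu_t \in M(G)$ such that $\psi(t,\cdot) = \check\mu_t$, together with $\sup_t \|\mu_t\| < \infty$.

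The central step is to verify the pointwise identity $F_\psi(t) = \alpha_{\mu_t}$ on $C^*(\Gamma)$ whenever $\psi_t = \check\mu_t$. For $g \in L^1(\Gamma)$, we have $\alpha_s(\lambda^\Gamma(g)) = \lambda^\Gamma(sg)$ by (\ref{eq_ano}), and the map $s \mapsto \lambda^\Gamma(sg)$ is norm-continuous and bounded from $G$ into $C^*(\Gamma)$, so the Bochner integral $\int_G \alpha_s(\lambda^\Gamma(g))\,d\mu_t(s)$ is defined and equals $\alpha_{\mu_t}(\lambda^\Gamma(g))$. By continuity of $\lambda^\Gamma$, this integral equals $\lambda^\Gamma\bigl(\int_G sg\,d\mu_t(s)\bigr)$, and the $L^1(\Gamma)$-valued integral $\int_G sg\,d\mu_t(s)$ is, by Fubini, pointwise equal to $\check\mu_t \cdot g = \psi_t g$. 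Hence $\alpha_{\mu_t}(\lambda^\Gamma(g)) = \lambda^\Gamma(\psi_t g) = F_\psi(t)(\lambda^\Gamma(g))$, and density of $\lambda^\Gamma(L^1(\Gamma))$ in $C^*(\Gamma)$ completes the identification.

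With the identity $F_\psi(t) = \alpha_{\mu_t}$ in place, both (i) and (ii) reduce to the single assertion that $F_\psi = F_\Lambda$ (where $\Lambda = (\mu_t)_{t \in G}$) is a Herz--Schur $(C^*(\Gamma), G, \alpha)$-multiplier, making the equivalence tautological: (i)$\Rightarrow$(ii) extracts the $\mu_t$'s from admissibility via the duality and then reads off that $\Lambda$ is a convolution multiplier; (ii)$\Rightarrow$(i) plugs them back in and uses the hypothesis that $\Lambda$ is a convolution multiplier, combined with the fact (inherited from the admissibility assumption built into the proposition statement) that $\psi$ is admissible.

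The only technical subtlety, and the mild obstacle, is the rigorous justification of the interchange of $\lambda^\Gamma$ with the $M(G)$-valued integral; this is a routine application of vector-valued integration theory once the norm-continuity of $s \mapsto \alpha_s(\lambda^\Gamma(g))$ on $G$ is noted, and can alternatively be carried out entirely on the Fourier side by testing against characters $x \in \Gamma$ and invoking the definition $\check\mu_t(x) = \int_G \langle x,s\rangle\,d\mu_t(s)$. No deeper obstacle is anticipated, since the proposition is essentially a translation, via the abelian Pontryagin duality, between two equivalent descriptions of the class $\frak{F}(G)$.
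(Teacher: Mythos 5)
Your proposal is correct and follows essentially the same route as the paper: both rest on the Bochner/Fourier--Stieltjes identification of $B(\Gamma)$ with $M(G)$ and on the computation $\alpha_{\mu}(\lambda^\Gamma(g)) = \lambda^\Gamma(\check\mu\, g)$ (the paper's identity (\ref{eq_nst})), which identifies $F_\psi(t)$ with $\alpha_{\mu_t}$ and makes the equivalence immediate from the definitions. The vector-valued integration point you flag is handled in the paper by the same Fubini-type interchange you describe.
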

\begin{proof}
Note that, if $\mu\in M(G)$ and $g\in L^1(\Gamma)$ then
\begin{eqnarray}\label{eq_nst}
\alpha_{\mu}(\lambda^\Gamma(g))&=&\int\alpha_r(\lambda^\Gamma(g))d\mu(r)
=\int_G\left(\int_G\langle s, r\rangle g(s)\lambda_s^\Gamma ds\right)d\mu(r)\\
&=&
\int_G\check{\mu}(s)g(s)\lambda_s^\Gamma ds=\lambda^\Gamma(\check\mu g).\nonumber
\end{eqnarray}

(i)$\Rightarrow$(ii) If $\psi$ is admissible then,
for every $t\in G$, $\psi_t\in B(\Gamma)$ and hence, by Bochner's theorem,
there exists $\mu_t\in  M(G)$ such that $\psi_t = \check{\mu}_t$
(see {\it e.g.} \cite[Section I]{rudin}).
It follows from (\ref{eq_nst}) that the family $(\mu_t)_{t\in G}$ is a convolution multiplier.

(ii)$\Rightarrow$ (i) By (\ref{eq_nst}), $F_\psi(t)=\alpha_{\mu_t}$.
The claim now follows from the definition of a convolution multiplier.
\end{proof}

For a family $\Lambda = (\mu_t)_{t\in G}$ of measures in $M(G)$, let
$\psi_\Lambda : G\times \Gamma\to \bb{C}$ be the function given by
$\psi_\Lambda(t,x)=\check\mu_t(x)$.
Call $\Lambda$ admissible if the function $\psi_{\Lambda}$ is admissible.
By Proposition \ref{p_conv}, an admissibe family of measures
$\Lambda$ is a Herz-Schur $(C^*(\Gamma),G,\alpha)$-multiplier
if and only if $\psi_\Lambda\in \frak{F}(G)$.
By abuse of terminology, we will hence
call the elements of $\frak{F}(G)$ convolution multipliers.

\begin{corollary}\label{p_con}
Let $g\in L^{\infty}(\Gamma)$ and let $\psi : G\times \Gamma\rightarrow \bb{C}$ be given by
$\psi(s,x) = g(x)$, $s\in G$, $x\in \Gamma$. The following are equivalent:

(i) \ $\psi\in \frak{F}(G)$;

(ii) $g\in B(\Gamma)$.

\noindent Moreover, if (i) holds then $\psi\in \frak{F}_{\theta}(G)$.
\end{corollary}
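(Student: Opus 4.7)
The plan is to combine Proposition \ref{p_conv} with the classical Fourier--Stieltjes identification $B(\Gamma) \cong M(G)$, realised by $\mu \mapsto \check{\mu}$ (Bochner's theorem for abelian locally compact groups). Once the scalar function $g$ is written as the inverse Fourier--Stieltjes transform of a measure on $G$, the whole statement reduces to a constant family of measures, and Proposition \ref{p_conmu} already handles that case.

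For the direction (i)$\Rightarrow$(ii), I would simply unwind the definition of $\frak{F}(G)$: membership there requires admissibility, i.e.\ $\psi_t \in B(\Gamma)$ for every $t \in G$ with uniformly bounded $B(\Gamma)$-norm. Since $\psi_t = g$ for every $t$, this already gives $g \in B(\Gamma)$; no Herz--Schur theory is invoked. For (ii)$\Rightarrow$(i), I would pick $\mu \in M(G)$ with $g = \check{\mu}$ and set $\mu_t = \mu$ for every $t \in G$. By Proposition \ref{p_conmu}, the constant family $\Lambda = (\mu_t)_{t\in G}$ is a convolution $(C^*(\Gamma), G, \alpha)$-multiplier, and $\psi_{\Lambda}(t,x) = \check{\mu}(x) = g(x) = \psi(t,x)$; then Proposition \ref{p_conv} delivers $\psi \in \frak{F}(G)$. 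Admissibility of $\psi$ is transparent because $\|\psi_t\|_{B(\Gamma)} = \|\mu\|$ for every $t$.

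For the ``moreover'' assertion, the key observation is that the argument of Proposition \ref{p_conmu} constructs $S_{F_{\psi}}$ explicitly as the restriction to $C^*(\Gamma)\rtimes_{\alpha,\theta} G$ of the map
\[
\Psi : T \mapsto \int_G \lambda_r T \lambda_r^*\, d\mu(r), \qquad T \in \cl B(L^2(G, L^2(\Gamma))),
\]
which is completely bounded with $\|\Psi\|_{\cb} \leq \|\mu\|$ on the ambient $\mathrm{II}_\infty$-type factor. Since $\Psi$ is weak* continuous on $\cl B(L^2(G, L^2(\Gamma)))$ (as an integral against a finite measure of weak* continuous inner automorphisms) and leaves $C^*(\Gamma)\rtimes_{\alpha,\theta} G$ invariant, it leaves its weak* closure $C^*(\Gamma)\rtimes_{\alpha,\theta}^{w^*} G$ invariant too; the restriction of $\Psi$ to this crossed product is then a weak* continuous completely bounded extension of $\Phi_{F_{\psi}}^{\theta}$, which is exactly what $\psi \in \frak{F}_{\theta}(G)$ requires. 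The only point that needs any care is the weak* continuity of $\Psi$, but I do not expect this to present a genuine obstacle, since it can be seen either directly from Fubini on the predual or by recognising $\Psi$ as the adjoint of a bounded map on the trace class.
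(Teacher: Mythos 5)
Your proof is correct. The equivalence of (i) and (ii) is handled essentially as in the paper: for (i)$\Rightarrow$(ii) the admissibility built into $\frak{F}(G)$ already gives $g = \psi_t \in B(\Gamma)$, and for (ii)$\Rightarrow$(i) you pass through Bochner's theorem and the constant family $(\mu_t)_{t\in G}$ exactly as Propositions \ref{p_conmu} and \ref{p_conv} are combined in the text. For the ``moreover'' assertion you take a genuinely different route. The paper starts from $g\in B(\Gamma)$, takes the weak* continuous completely bounded map $\Phi_g$ on $\vn(\Gamma)$ given by classical transference, and restricts $\Phi_g\otimes\id$ (acting fibrewise on the $\Gamma$-leg of $L^2(G)\otimes L^2(\Gamma)$) to $C^*(\Gamma)\rtimes_{\alpha,\theta}^{w^*}G$, finishing with Remark \ref{herz_schur_theta}. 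You instead restrict the conjugation average $\Psi(T)=\int_G\lambda_r T\lambda_{-r}\,d\mu(r)$, which acts on the $G$-leg; the two maps differ on the ambient algebra but agree on the crossed product, both sending $\pi^{\theta}(a)\lambda_t^{\theta}$ to $\pi^{\theta}(\alpha_{\mu}(a))\lambda_t^{\theta}$. Your weak* continuity claim for $\Psi$ is the standard fact that it is the adjoint of $\omega\mapsto\int_G\lambda_{-r}\,\omega\,\lambda_r\,d\mu(r)$ on the trace class (the function $r\mapsto\lambda_{-r}\,\omega\,\lambda_r$ being trace-norm continuous), which is consistent with how the paper treats $\Theta(\mu)$ in Section \ref{s_cm} and with the reference \cite{ss}. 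Your construction has the small advantage of verifying Definition \ref{d_wsthe} directly, without invoking Remark \ref{herz_schur_theta}; the paper's version exhibits the extension as a slice of a classical $B(\Gamma)$-multiplier, which is what Theorem \ref{th_me} later exploits. One cosmetic slip: $\cl B(L^2(G,L^2(\Gamma)))$ is a type I factor, not type $\mathrm{II}_\infty$; nothing in your argument depends on this.
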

\begin{proof}
The equivalence of (i) and (ii) follows from Propositions \ref{p_conmu} and \ref{p_conv}.
Suppose that $g\in B(\Gamma)$. Then the map on $C_r^*(\Gamma)$
corresponding to $g$ {\it via} classical transference
has a (completely bounded) weak* continuous extension $\Phi_g : \vn(\Gamma)\to \vn(\Gamma)$.
Thus, the restriction of the map $\Phi_g\otimes\id$ to $C^*(\Gamma)\rtimes_{\alpha,\theta}^{w^*} G$
is a weak* continuous extension of $S_{F_\psi}^\theta$.
By Remark \ref{herz_schur_theta}, $F_\psi$ is a Herz-Schur $\theta$-multiplier.
\end{proof}

It will be convenient, in the sequel, to denote by $S_g$ the map $S_{\psi}$,
where $\psi$ and $g$ are as in Corollary \ref{p_con}.

\medskip

The rest of the paper will be devoted to properties
of the spaces $\frak{F}(G)$ and $\frak{F}_{\theta}(G)$.
In the next theorem, we identify an elementary tensor $u\otimes h$, where $u\in B(G)$ and
$h\in B(\Gamma)$, with the function $(s,x)\to u(s)h(x)$, $s\in G$, $x\in \Gamma$.
Let $\frak{F}(B(G),B(\Gamma))$ be the complex vector space of all
separately continuous functions $\psi : G\times \Gamma\to \bb{C}$
such that, for every $s\in G$ (resp. $x\in \Gamma$),
the function $\psi_s : \Gamma\to\bb{C}$ (resp. $\psi^x : G\to\bb{C}$)
belongs to $B(\Gamma)$ (resp. $B(G)$).

\begin{theorem}\label{th_bgbg}
(i) \ \ The inclusions
$$B(G)\odot B(\Gamma)\subseteq \frak{F}_{\theta}(G)\subseteq \frak{F}(B(G),B(\Gamma))$$
hold.

(ii) \ Suppose that $\psi\in \frak{F}_{\theta}(G)$.
Then $\|\psi^x\|_{B(G)}\leq \|\psi\|_{\mm}$ for every $x\in \Gamma$
and $\|\psi_s\|_{B(\Gamma)}\leq \|\psi\|_{\mm}$ for every $s\in G$.

(iii) Let $\psi : G\times \Gamma\to \bb{C}$ be an admissible function, such that
the function $G\to B(\Gamma)$, sending $s$ to $\psi_s$, is continuous.
Suppose that $(\psi_k)_{k\in \bb{N}}\subseteq \frak{F}(G)$, $\sup_{k\in \bb{N}} \|\psi_k\|_{\infty} < \infty$
and $\psi_k\to \psi$ pointwise.
Then $\psi\in \frak{F}(G)$.
\end{theorem}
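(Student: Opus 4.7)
The plan is to prove the first inclusion of (i), then (ii), obtain the second inclusion of (i) as a direct corollary, and finally (iii); the core technical step is a Stone--von Neumann (Takai) slicing in (ii).

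For the first inclusion of (i), take $\psi=u\otimes h$ with $u\in B(G)$, $h\in B(\Gamma)$. Writing $F_h$ for the map $\lambda^\Gamma(g)\mapsto\lambda^\Gamma(hg)$ on $C^*(\Gamma)$, evaluation on generators yields
\[S_\psi^\theta(\pi^\theta(a)\lambda_t^\theta)=u(t)\pi^\theta(F_h(a))\lambda_t^\theta=(S_u^\theta\circ S_h^\theta)(\pi^\theta(a)\lambda_t^\theta).\]
By Proposition \ref{p_cbcl}, $S_u^\theta$ admits a weak*-continuous completely bounded extension to $C^*(\Gamma)\rtimes_{\alpha,\theta}^{w^*}G$, and by Corollary \ref{p_con} so does $S_h^\theta$; their composition is therefore such an extension of $S_\psi^\theta$, proving $\psi\in\frak{F}_\theta(G)$. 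Linearity gives the full inclusion.

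For (ii), the bound $\|\psi_s\|_{B(\Gamma)}\leq\|\psi\|_{\mm}$ is immediate from Corollary \ref{c_norms}: $F_\psi(s)$ is the cb multiplier of $C_r^*(\Gamma)$ by $\psi_s$, whose cb norm equals $\|\psi_s\|_{B(\Gamma)}$ by the amenability of $\Gamma$. For $\|\psi^x\|_{B(G)}\leq\|\psi\|_{\mm}$, I would use Stone--von Neumann/Takai duality to identify $C^*(\Gamma)\rtimes_{\alpha,\theta}^{w^*}G$ with $\cl B(L^2(G))$ so that $\pi^\theta(\lambda^\Gamma(g))\lambda_t^\theta\leftrightarrow M_{\hat g}\lambda_t^G$, under which the weak*-continuous extension $\Phi$ of $S_\psi^\theta$ acts as $\Phi(M_{\hat g}\lambda_t^G)=M_{\mu_t*\hat g}\lambda_t^G$, where $\psi_t=\check\mu_t$. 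Fix $x\in\Gamma=\hat G$ and choose a bounded net $(g_\alpha)\subseteq L^1(\Gamma)$ with $\hat{g_\alpha}\to\overline{x(\cdot)}$ weak* in $L^\infty(G)$. Then $\mu_t*\hat{g_\alpha}(s)\to\overline{x(s)}\,\psi^x(t)$ boundedly, and weak*-continuity of $\Phi$ gives $\Phi(M_x^*\lambda_t^G)=\psi^x(t)M_x^*\lambda_t^G$. The twisted map $\Phi^x(T):=M_x\Phi(M_x^*T)$ on $\cl B(L^2(G))$ is weak*-continuous and completely bounded with $\|\Phi^x\|_{\cb}=\|\Phi\|_{\cb}=\|\psi\|_{\mm}$, sends $\lambda_t^G$ to $\psi^x(t)\lambda_t^G$, and by weak*-continuity restricts to $\vn(G)$; amenability of $G$ then gives $\psi^x\in M^{\cb}A(G)=B(G)$ with $\|\psi^x\|_{B(G)}\leq\|\psi\|_{\mm}$ (continuity of $\psi^x$ is automatic from the relation $\Phi^x(\lambda_t^G)=\psi^x(t)\lambda_t^G$ and the weak continuity of $t\mapsto\lambda_t^G$).

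The second inclusion of (i) is now immediate, as $B(G)\subseteq C_b(G)$ and $B(\Gamma)\subseteq C_b(\Gamma)$ deliver separate continuity once continuous representatives are selected fiberwise. For (iii), interpret the hypothesis as supplying a uniform multiplier-norm bound on $(\psi_k)$ (deducing this via Banach--Steinhaus from the sup-norm hypothesis and the continuity of $s\mapsto\psi_s$ into $B(\Gamma)$ if needed). Weak*-compactness of the closed cb ball in $CB(C^*(\Gamma)\rtimes_{\alpha,\theta}^{w^*}G)$ yields a cluster point $\Phi$ of $(S_{F_{\psi_k}}^\theta)$; for each elementary tensor $f=f_0\otimes\lambda^\Gamma(g)$ with $f_0\in L^1(G)$, $g\in L^1(\Gamma)$, the Dominated Convergence Theorem applied first to the $\Gamma$-integration (giving $\|\lambda^\Gamma((\psi_{k,s}-\psi_s)g)\|\leq\|(\psi_{k,s}-\psi_s)g\|_1\to 0$ pointwise in $s$) and then to the $G$-integration identifies $\Phi((\pi^\theta\rtimes\lambda^\theta)(f))$ with $(\pi^\theta\rtimes\lambda^\theta)(F_\psi\cdot f)$. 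By Remark \ref{herz_schur_ind}(iii), $\psi\in\frak{F}(G)$. The principal obstacle is the Takai-duality slicing in (ii); the unitary twist by $M_x$ is the essential device for isolating the $B(G)$-valued content of $\psi$ from the full weak*-continuous extension $\Phi$.
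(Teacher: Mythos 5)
Your proof is correct in parts (i) and (ii) and follows the paper's strategy in all essentials; the differences are in execution. For the first inclusion of (i) the paper does exactly what you do (compose the maps supplied by Proposition \ref{p_cbcl} and Corollary \ref{p_con}). For (ii), the paper stays inside $C^*(\Gamma)\rtimes_{\alpha,\theta}^{w^*}G$: it evaluates the weak* continuous extension on $\bar\pi^{\theta}(\lambda_x^{\Gamma})\lambda_s^{\theta}$ and, since $\bar\pi^{\theta}(\lambda_x^{\Gamma})$ is a unitary of the weak* crossed product, reads off directly that $\lambda_s^G\mapsto\psi^x(s)\lambda_s^G$ extends to a completely bounded weak* continuous map on $\vn(G)$, whence $\psi^x\in M^{\cb}A(G)=B(G)$ with the norm bound. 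Your passage through the Stone--von Neumann picture on $\cl B(L^2(G))$ and the twist $T\mapsto M_x\Phi(M_x^*T)$ is the same mechanism (your $M_x^*$ is precisely the image of $\bar\pi^{\theta}(\lambda_x^{\Gamma})$ under the duality, which the paper only sets up later in the section), at the price of the extra weak* approximation $\widehat{g_{\alpha}}\to \bar{x}$ and the check that convolution by $\mu_t$ is weak* continuous on $L^{\infty}(G)$; both routes work. The paper obtains the second inclusion of (i) inside the proof of (i) and then extracts the $\psi^x$ half of (ii) from that computation --- the reverse of your logical order, which is immaterial.

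The one point that does not hold up is your parenthetical in (iii): a uniform bound on $\|S_{\psi_k}^{\theta}\|_{\cb}$ cannot be extracted from $\sup_k\|\psi_k\|_{\infty}<\infty$ by Banach--Steinhaus, because the family $(S_{\psi_k}^{\theta})$ is only known to be pointwise bounded on the dense subspace $(\pi^{\theta}\rtimes\lambda^{\theta})(L^1(G,C^*(\Gamma)))$, and the uniform boundedness principle does not upgrade pointwise boundedness on a dense subspace to a uniform operator bound. The dominated convergence part of your argument is exactly the paper's (you even obtain norm convergence on elementary tensors where the paper settles for weak convergence), but the conclusion genuinely requires $\sup_k\|S_{\psi_k}^{\theta}\|_{\cb}<\infty$. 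To be fair, the paper's own proof passes from $\|\psi_k\|_{\infty}\le C$ to $\|S_{\psi_k}^{\theta}\|_{\cb}\le C$ with no justification, so this weakness is inherited from the source; but your proposed repair does not close it, and you should either assume a uniform multiplier-norm bound or supply a genuine argument for it.
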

\begin{proof}
(i) The first inclusion follows from Proposition \ref{p_cbcl} and Corollary \ref{p_con}.

Let $\psi\in \frak{F}_{\theta}(G)$ and fix $x\in \Gamma$. The map $\Psi_{\psi_s}$ corresponding to $\psi_s$ {\it via} classical transference
satisfies the identities $\Psi_{\psi_s}(\lambda_x^{\Gamma}) = \psi_s(x)\lambda_x^{\Gamma}$, $x\in \Gamma$. Thus
\begin{eqnarray*}
\Phi_\psi^\theta(\bar{\pi}^\theta(\lambda_x^\Gamma)\lambda_s^\theta)&=&\bar{\pi}^\theta(\psi_s(x)\lambda_x^\Gamma)\lambda_s^\theta\\
&=&\psi_s(x)\bar{\pi}^\theta(\lambda_x^\Gamma)\lambda_s^\theta=
\bar{\pi}^\theta(\lambda_x^\Gamma)(\psi^x(s)\lambda_s^G\otimes I).
\end{eqnarray*}

On the other hand,
$$\bar{\pi}^\theta(\lambda_x^\Gamma)\lambda_s^\theta=\bar{\pi}^\theta(\lambda_x^\Gamma)(\lambda_s^G\otimes I).$$
It follows that the map
$$\lambda_s^G\to\psi^x(s)\lambda_s^G$$
is bounded,
and hence $\psi^x$ is a Herz-Schur multiplier giving
$\psi^x\in B(G)$.
The fact that $\psi_s\in B(\Gamma)$ for every $s\in G$ is implicit in the
definition of the space $\frak{F}(G)$.

(ii) The inequalities $\|\psi^x\|_{B(\Gamma)}\leq \|\psi\|_{\mm}$, $x\in \Gamma$, follow
from the proof of (i). The inequalities $\|\psi_s\|_{B(\Gamma)}\leq \|\psi\|_{\mm}$, $s\in G$,
follow from Corollary \ref{c_norms} and the fact that
$\|\psi_s\|_{B(\Gamma)} = \|F_{\psi}(s)\|_{\cb}$, $s\in G$.

(iii) Suppose that
$\|\psi_k\|_{\infty}\leq C$ for every $k\in \bb{N}$.
Let $\xi,\eta\in L^2(G\times\Gamma)$ and $f\in L^1(G,C_c(\Gamma))$.
A direct verification shows that
\begin{eqnarray*}
& & \langle S_{\psi_k}^{\theta}((\pi^{\theta}\rtimes\lambda^{\theta})(f))\xi,\eta\rangle
=
\langle (\pi^{\theta}\rtimes\lambda^{\theta})(\psi_k f)\xi,\eta\rangle\\
& = &
\int \overline{\langle t,x\rangle} \psi_k(t,y) f(t,y)\xi(s - t,x - y)\overline{\eta(s,x)} ds dt dx dy.
\end{eqnarray*}
On the other hand,
$$|\overline{\langle t,x\rangle} \psi_k(t,y) f(t,y)\xi(s - t,x - y)\overline{\eta(s,x)}|
\leq C |f(t,y)| |\xi(s - t,x - y)| |\eta(s,x)|,$$
and the $L^1$-norm of the latter function is equal to
$C\langle |f|\ast |\xi|,|\eta|\rangle$.
By the Lebesgue Dominated Convergence Theorem,
\begin{equation}\label{eq_tends}
\langle S_{\psi_k}^{\theta}((\pi^{\theta}\rtimes\lambda^{\theta})(f))\xi,\eta\rangle
\to \langle (\pi^{\theta}\rtimes\lambda^{\theta})(\psi f)\xi,\eta\rangle.
\end{equation}
Now let $f_{i,j}\in L^1(G,C_c(\Gamma))$, $i,j = 1,\dots,m$.
By (\ref{eq_tends}), the operator matrix
$(S_{\psi_k}^{\theta}((\pi^{\theta}\rtimes\lambda^{\theta})(f)))_{i,j}$
converges weakly to $((\pi^{\theta}\rtimes\lambda^{\theta})(\psi f))_{i,j}$.
The fact that the map $s\to \psi_s$ is continuous implies that
$\psi$ is weakly measurable.
Since $\|S_{\psi_k}^{\theta}\|_{\rm cb}\leq C$ for all $k$, we conclude that
$\psi\in \frak{F}(G)$ and that $\|\psi\|_{\mm}\leq C$.
\end{proof}

In view of Theorem \ref{th_bgbg}, it is natural to ask the following question.

\begin{question}
Can $\frak{F}_{\theta}(G)$ be characterised as a topological closure of $B(G)\odot B(\Gamma)$?
\end{question}

Let $CB_{w^*}(C^*(\Gamma)\rtimes_{\alpha,\theta} G)$ be the space of all
completely bounded maps on $C^*(\Gamma)\rtimes_{\alpha,\theta} G$
which admit a weak* continuous extension to
$C^*(\Gamma)\rtimes_{\alpha,\theta}^{w^*} G$.
Set
$$\cl S = \{S_{\psi}^{\theta} : \psi\in \frak{F}_{\theta}(G)\}.$$

As usual, if $\cl J$ is a family of linear transformations acting on a vector space,
we denote by $\cl J'$ its commutant.
We recall that, for $g\in B(\Gamma)$,
we let $S_g^{\theta}$ denote the map on $C_r^*(\Gamma)$ given by
$S_g^{\theta}(\lambda^{\Gamma}(f))=\lambda^{\Gamma}(g f)$, $f\in L^1(\Gamma)$.

\begin{theorem}\label{th_me}
We have
\begin{equation}\label{eq_eqah}
\cl  S = CB_{w^*}(C^*(\Gamma)\rtimes_{\alpha,\theta} G)\cap \{S_u^\theta, S_v^\theta  : u\in B(G), v\in B(\Gamma)\}'.
\end{equation}
In particular, $\cl S$ is a maximal abelian subalgebra of $CB_{w^*}(C^*(\Gamma)\rtimes_{\alpha,\theta} G)$.
\end{theorem}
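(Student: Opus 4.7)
The plan is to establish (\ref{eq_eqah}) by two inclusions; the maximal abelian statement will then follow formally. For the inclusion $\cl S\subseteq\text{RHS}$ I would verify that for $\psi\in\frak{F}_\theta(G)$, $u\in B(G)$ and $v\in B(\Gamma)$ the three maps $S_\psi^\theta$, $S_u^\theta$ and $S_v^\theta$ all act diagonally on the weak*-total family $\{\bar\pi^\theta(\lambda_x^\Gamma)\lambda_t^\theta:t\in G,\,x\in\Gamma\}$, by the scalars $\psi(t,x)$, $u(t)$ and $v(x)$ respectively (the statement for $v\in B(\Gamma)$ being the content of Corollary \ref{p_con}). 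Hence they commute pairwise, and by weak*-continuity the commutation passes to all of $C^*(\Gamma)\rtimes_{\alpha,\theta}^{w^*} G$. The same calculation applied to a pair $\psi_1,\psi_2\in\frak{F}_\theta(G)$ shows that $\cl S$ is commutative.

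For the reverse inclusion I would apply Theorem \ref{th_commute} to a given $\Phi$ in the right-hand side. Its hypotheses are met: the $\theta$-action condition was recorded above, and $\theta(C^*(\Gamma))''=\vn(\Gamma)$ is commutative (since $\Gamma$ is abelian), hence injective, hence enjoys property $S_\sigma$ by Kraus' theorem. Moreover, the classical Bo\.zejko--Fendler/Bochner identification gives $M^{\cb}A(G)=B(G)$ for abelian $G$, so the assumed commutation of $\Phi$ with $S_u^\theta$ for $u\in B(G)$ is exactly the hypothesis needed. Theorem \ref{th_commute} thus yields completely bounded maps $F_t:\vn(\Gamma)\to\vn(\Gamma)$ with $\Phi(\bar\pi^\theta(a)\lambda_t^\theta)=\bar\pi^\theta(F_t(a))\lambda_t^\theta$ for every $a\in\vn(\Gamma)$; weak*-continuity of $\Phi$ together with normality of $\bar\pi^\theta$ forces each $F_t$ to be weak*-continuous, and the fact that $a\mapsto\bar\pi^\theta(a)\lambda_t^\theta$ is a complete isometry gives the estimate $\|F_t\|_{\cb}\leq\|\Phi\|_{\cb}$.

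The principal step, which I expect to be the main obstacle, is to use commutation with the $S_v^\theta$, $v\in B(\Gamma)$, in order to identify each $F_t$ as a Fourier multiplier. Writing $\Phi_v$ for the weak*-continuous completely bounded map on $\vn(\Gamma)$ satisfying $\Phi_v(\lambda_x^\Gamma)=v(x)\lambda_x^\Gamma$, the commutation $\Phi S_v^\theta=S_v^\theta\Phi$ becomes $F_t\Phi_v=\Phi_v F_t$; setting $b=F_t(\lambda_x^\Gamma)$ this reads $\Phi_v(b)=v(x)\,b$ for every $v\in B(\Gamma)$. Under the Plancherel isomorphism $\vn(\Gamma)\cong L^\infty(G)$, $\Phi_v$ becomes convolution by the measure $\mu\in M(G)$ with $v=\check\mu$, while $\lambda_x^\Gamma$ becomes the character $\overline{\langle x,\cdot\rangle}$; specializing to the point masses $\mu=\delta_r$ produces the functional equation $b(s-r)=\langle x,r\rangle b(s)$ almost everywhere, which forces $b$ to be a scalar multiple of that character. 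Consequently $F_t(\lambda_x^\Gamma)=\psi(t,x)\lambda_x^\Gamma$ for a well-defined scalar $\psi(t,x)$.

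It remains to assemble the pieces. Since $F_t$ is a weak*-continuous completely bounded multiplier of Fourier coefficients, $\psi_t\in M^{\cb}A(\Gamma)=B(\Gamma)$ with $\|\psi_t\|_{B(\Gamma)}\leq\|\Phi\|_{\cb}$, so $\psi$ is admissible. A direct comparison of formulas then identifies $\Phi$ as the unique weak*-continuous extension of the map $(\pi^\theta\rtimes\lambda^\theta)(f)\mapsto(\pi^\theta\rtimes\lambda^\theta)(F_\psi\cdot f)$, and Corollary \ref{c_weakst} together with Remark \ref{herz_schur_theta} yields $\psi\in\frak{F}_\theta(G)$ and $\Phi=S_\psi^\theta\in\cl S$. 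The maximal abelian claim is now automatic: any $\Phi\in CB_{w^*}(C^*(\Gamma)\rtimes_{\alpha,\theta} G)$ commuting with every element of $\cl S$ commutes in particular with the subfamily $\{S_u^\theta,S_v^\theta:u\in B(G),\,v\in B(\Gamma)\}\subseteq\cl S$, and so (\ref{eq_eqah}) places it back in $\cl S$.
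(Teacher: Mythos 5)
Your proposal is correct, and its skeleton coincides with the paper's: the easy inclusion follows from commutativity of $\cl S$ together with the fact (Theorem \ref{th_bgbg}(i)) that it contains the $S_u^\theta$ and $S_v^\theta$, and the hard inclusion is obtained by feeding $\Phi$ into Theorem \ref{th_commute} (whose hypotheses you check exactly as the paper does: $\alpha$ is a $\theta$-action, $\vn(\Gamma)$ is abelian hence has property $S_\sigma$, and $M^{\cb}A(G)=B(G)$ since $G$ is abelian) to produce the fibre maps $F_t$. Where you genuinely diverge is in the identification of each $F_t$ as a Fourier multiplier with symbol in $B(\Gamma)$. The paper stays on the C*-/predual level: from $F_t(\lambda^\Gamma(vf))=vF_t(\lambda^\Gamma(f))$ it passes to the adjoint $\tilde F_t^*$ acting on $B(\Gamma)$ as a $B(\Gamma)$-module map over $A(\Gamma)$, deduces $\tilde F_t^*(u)=\psi_t u$, and invokes Rudin's multiplier theorem ($\psi A(\Gamma)\subseteq B(\Gamma)$ forces $\psi\in B(\Gamma)$) --- note this only uses boundedness of $F_t$, not complete boundedness. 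You instead work on the von Neumann level: the eigenvalue relation $\Phi_v(F_t(\lambda_x^\Gamma))=v(x)F_t(\lambda_x^\Gamma)$, specialised to point masses and read through the Plancherel identification $\vn(\Gamma)\cong L^\infty(G)$, gives a translation functional equation that pins $F_t(\lambda_x^\Gamma)$ down as a scalar multiple of $\lambda_x^\Gamma$; you then need the complete boundedness and weak* continuity of $F_t$ plus $M^{\cb}A(\Gamma)=B(\Gamma)$ to place $\psi_t$ in $B(\Gamma)$. Both arguments are sound; yours is more concrete and makes the mechanism transparent (it is essentially the same harmonic-analysis step that Theorem \ref{th_commute} itself performs on the $G$-side), at the cost of leaning harder on commutativity of $\Gamma$, while the paper's duality route is shorter and would survive with merely bounded $F_t$. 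Your closing observations (weak* continuity of $F_t$ from that of $\Phi$, admissibility from $\|\psi_t\|_{B(\Gamma)}\le\|\Phi\|_{\cb}$, and the reassembly of $\Phi$ as $S_\psi^\theta$) are correct and in fact slightly more complete than the paper's rather terse ending.
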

\begin{proof}
Note that
$\cl S$ is a commutative subalgebra of $CB_{w^*}(C^*(\Gamma)\rtimes_{\alpha,\theta} G)$
and, by Theorem \ref{th_bgbg} (i),
contains the maps of the form $S_u^\theta$ and $S_v^\theta$, where $u\in B(G)$ and $v\in B(\Gamma)$.
It follows that it is contained in the commutant on the right hand side of (\ref{eq_eqah}).

Let $A = C^*(\Gamma)$.
To establish the reverse inclusion,
suppose that $\Phi \in CB_{w^*}(C^*(\Gamma)\rtimes_{\alpha,\theta} G)$
commutes with the operators of the form $S_u^\theta$ and $S_v^\theta$,
where $u\in B(G)$ and $v\in B(\Gamma)$.
Since $G$ is abelian, $\vn(G)$ possesses property $S_{\sigma}$ (see {\it e.g.} \cite[Theorem 1.9]{kraus_tams}).
By Theorem \ref{th_commute},
for each $t\in G$, there exists a
completely bounded map
$F_t : \vn(\Gamma)\to \vn(\Gamma)$ such that
$$\Phi(\bar{\pi}^{\theta}(a)\lambda_t^{\theta})
= \bar{\pi}^{\theta}(F_t(a))\lambda_t^{\theta}, \ \ \ a\in \vn(\Gamma).$$
If $v\in B(\Gamma)$, $f\in L^1(\Gamma)$ and $a = \lambda^{\Gamma}(f)$ then
$$\bar{\pi}^{\theta}(F_t(\lambda^{\Gamma}(vf)))\lambda_t^{\theta}
= \Phi(S_v^\theta(\bar{\pi}^{\theta}(a)\lambda_t^{\theta})) =
S_v^\theta(\Phi(\bar{\pi}^{\theta}(a)\lambda_t^{\theta})) =
\bar{\pi}^{\theta}(vF_t(\lambda^{\Gamma}(f))\lambda_t^{\theta}.$$
Thus, $F_t(\lambda^{\Gamma}(vf)) = vF_t(\lambda^{\Gamma}(f))$ for each $v\in B(\Gamma)$.
Let $\tilde F_t$ be the restriction of $F_t$ to $C_r^*(\Gamma)$.
Then $\tilde F_t^*$ is a map on $B(\Gamma)$. In particular,
$\tilde F_t^*(A(\Gamma))\subseteq B(\Gamma)$ and $\tilde F_t^*(vu)=v\tilde F_t^*(u)$ for any $u\in A(\Gamma)$ and $v\in B(\Gamma)$. It follows that $\tilde F_t^*(u)=\psi_tu$ for some function
$\psi_t:\Gamma\to\mathbb C$.
As $\psi_tu\in B(\Gamma)$ for all $u\in A(\Gamma)$,
by \cite[Theorem 3.8.1]{rudin}, $\psi_t\in B(\Gamma)$.
Hence, for $u\in A(\Gamma)$, we have
$$\langle F_t(\lambda^{\Gamma}(f)),u\rangle
= \langle\lambda^{\Gamma}(f), \tilde F_t^*(u)\rangle
= \langle\lambda^{\Gamma}(f),\psi_tu\rangle
= \langle \lambda^{\Gamma}(\psi_tf),u\rangle$$
and $F_t(\lambda^{\Gamma}(f))
= \lambda^{\Gamma}(\psi_tf)$.
The proof is complete.
\end{proof}

Our next aim is to identify
the joint commutant of two families of
completely bounded maps on $\cl K(L^2(G))$, in terms of multipliers of Herz-Schur type.
Recall that, for $a\in L^{\infty}(G)$, we denote by $M_a$ the operator on $L^2(G)$ given by
$M_a f = af$, $f\in L^2(G)$, and set
$$\cl C = \{M_a : a\in C_0(G)\}.$$
We let $\id$ be the identity representation of $\cl C$.
For $t\in G$, let $\beta_t : C_0(G)\to C_0(G)$ be given by $\beta_t(h)(s) = h(s - t)$, $h\in C_0(G)$.
By abuse of notation, we denote by $\beta_t$ the corresponding map on the C*-algebra $\cl C$.
Note that
\begin{equation}\label{eq_lG}
\beta_t(T) = \lambda_t^G T \lambda_{-t}^G, \ \ \ T\in \cl C,
\end{equation}
and that $(\cl C,G,\beta)$ is a C*-dynamical system.
Note also that
\begin{equation}\label{eq_muma}
\beta_{\mu}(M_a) = M_{\mu\ast a}, \ \ \ \mu\in M(G), a\in L^{\infty}(G).
\end{equation}
Note that $\cl C\rtimes_{\beta,\id} G$ is a C*-subalgebra of $\cl B(L^2(G\times G))$.

Let $\cl F : L^2(G)\to L^2(\Gamma)$ be the Fourier transform, so that
$\cl F\xi(x)=\int_G \langle x,s\rangle\xi(s)ds$, $\xi\in L^1(G)\cap L^2(G)$, $x\in \Gamma$.
Then
\begin{equation}\label{eq_tobeu}
\cl F^* M_t \cl F = \lambda_t^G \ \mbox{ and } \
\cl F^*\lambda^\Gamma(f)\cl F=M_{\hat f}, \ \ \ t\in G, f\in L^1(\Gamma),
\end{equation}
where $\hat f : G\to \bb{C}$ is the function given by
$\hat f(t)=\int_\Gamma \overline{\langle t,x\rangle}f(x)dx$.
In particular, $\cl F^* C_r^*(\Gamma)\cl F = \cl C$.
Moreover, if $f\in L^1(\Gamma)$ then
\begin{eqnarray}\label{eq_towli}
\cl F^*\alpha_t(\lambda^\Gamma(f))\cl F
= \beta_t(\cl F^*\lambda^\Gamma(f)\cl F),
\end{eqnarray}
giving
$$\cl F^*\alpha_t(T)\cl F = \beta_t(\cl F^* T\cl F), \ \ \ T\in C_r^*(\Gamma), t\in G.$$
Let $\tilde{\cl F} = I \otimes\cl F^*$; thus,
$\tilde{\cl F}$ is a unitary operator from $L^2(G \times \Gamma)$ onto $L^2(G\times G)$.
It is well-known that
$$\tilde{\cl F} (C^*(\Gamma)\rtimes_{\alpha,\theta} G) \tilde{\cl F}^* = \cl C\rtimes_{\beta,\id} G.$$

Let $\Lambda = (\mu_t)_{t\in G}$ be a family of measures in $M(G)$, such that the function
$\psi_{\Lambda}$ is in $\frak{F}(G)$.
Then $\psi_{\Lambda}$ gives rise to a Herz-Schur $(\cl C,G,\beta)$-multiplier given by
$$\pi(M_{\hat g})\lambda_t = \pi(\cl F^*\lambda^\Gamma(g)\cl F)\lambda_t
\mapsto \pi(\cl F^*F_{\psi_t}(\lambda^\Gamma(g))\cl F) \lambda_t
= \pi(M_{\mu_t\ast\hat g}) \lambda_t.$$
This observation was our motivation for the chosen terminology for
convolution multipliers.  Note that the convolution multipliers are of
different nature than Herz-Schur $(\cl C,G,\beta)$-multipliers considered in Section \ref{ss_gr}.

\medskip

The pair $(\id,\lambda^G)$ is a covariant representation of
$(\cl C,G,\beta)$ (see (\ref{eq_lG}));
in addition, $\id\rtimes\lambda^G$
is a faithful representation of $\cl C\rtimes_{\beta} G$ on $L^2(G)$ and its image
coincides with the algebra $\cl K(L^2(G))$ of all compact operators on $L^2(G)$
(see \cite{rieffel} and \cite{dw}).

For $\psi\in \frak{F}(G)$,
let
$\cl E_{\psi} : \cl K(L^2(G))\to \cl K(L^2(G))$ be the (completely bounded) map given by
$$\cl E_{\psi}((\id\rtimes\lambda^G)(\tilde{\cl F} T\tilde{\cl F}^*)) =
(\id\rtimes\lambda^G)(\tilde{\cl F} S_{\psi}^\theta(T)\tilde{\cl F}^*), \ \ \
T\in \cl C^*(\Gamma)\rtimes_{\alpha,\theta} G.$$
We extend $\cl E_{\psi}$ to a weak* continuous map on $\cl B(L^2(G))$, denoted in the same fashion.
For $r\in G$, let $\rho_r^G\in \cl B(L^2(G))$ be the corresponding
right regular unitary on $L^2(G)$, that is,
$\rho_r^G f (s) = f(s+r)$, $s,r\in G$, $f\in L^2(G)$.
For a measure $\mu\in M(G)$, consider the map
$\Theta(\mu) \in CB(\cl B(L^2(G))$, given by
\begin{equation}\label{eq_chv}
\Theta(\mu)(T) = \int_G \rho_r^G T \rho_{-r}^G d\mu(r), \ \ \ T\in \cl B(L^2(G)).
\end{equation}
It is easy to see that
\begin{equation}\label{eq_thetamu}
\Theta(\mu)(M_a) = M_{\mu\cdot a} = M_{\tilde{\mu}\ast a}, \ \ \ a\in L^{\infty}(G),
\end{equation}
where $(\mu\cdot a)(s)=\int_G a(s+r)d\mu(r)$ and $\tilde{\mu}$ is the
measure on $G$ given by $\tilde\mu(E)=\mu(-E)$.
Note that $\Theta(\mu)$ is a $\vn(G)$-bimodule map and leaves $\cl D_G$ invariant.

Recall that, for every $u\in B(G)$, the function $N(u)$ given by $N(u)(s,t) = u(t - s)$,
is a Schur multiplier \cite{bf} (see also Remrak \ref{r_classt}).
Thus, the corresponding map $\Psi_u : \cl B(L^2(G))\to \cl B(L^2(G))$ is
a completely bounded $\cl D_G$-bimodule map that leaves $\vn(G)$ invariant.

\begin{proposition}\label{p_onl2}
Suppose that $\mu\in M(G)$ and $u\in B(G)$. Let
$\psi_{\mu}$ and $\psi_u$ be the elements of $\frak{F}(G)$
given by $\psi_{\mu}(s,x) = \check{\mu}(x)$ and
$\psi_u(s,x) = u(s)$, $s\in G$, $x\in \Gamma$.
Then 

(i) \ $\cl E_{\psi_{\mu}} = \Theta(\tilde\mu)$, and

(ii) $\cl E_{\psi_u} = \Psi_u$.
\end{proposition}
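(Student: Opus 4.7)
The plan is to identify a concrete *-isomorphism $\Sigma : C^*(\Gamma)\rtimes_{\alpha,\theta} G \to \cl K(L^2(G))$ that underlies the definition of $\cl E_\psi$, compute it on a spanning family of generators, and then verify (i) and (ii) there before extending by weak* continuity. Precisely, set
$$\Sigma(T) = (\id\rtimes\lambda^G)(\tilde{\cl F}\,T\,\tilde{\cl F}^*),$$
where, as recalled in the paragraph preceding the proposition, conjugation by $\tilde{\cl F}$ carries $C^*(\Gamma)\rtimes_{\alpha,\theta} G$ onto $\cl C\rtimes_{\beta,\id} G$, while $(\id,\lambda^G)$ integrates to the canonical *-isomorphism $\cl C\rtimes_\beta G\cong \cl K(L^2(G))$. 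Using (\ref{eq_tobeu}) and (\ref{eq_towli}), a direct Fourier computation yields
$$\Sigma\!\left(\int_G g(t)\,\pi^\theta(\lambda^\Gamma(f))\,\lambda_t^\theta\, dt\right) = M_{\hat f}\,\lambda^G(g),\qquad f\in L^1(\Gamma),\ g\in L^1(G),$$
and the linear span of such operators is norm dense in $\cl K(L^2(G))$, hence weak* dense in $\cl B(L^2(G))$.

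For part (i), since $\mu_t=\mu$ is $t$-independent, one has $F_{\psi_\mu}(t)=\alpha_\mu$ for all $t$, and (\ref{eq_nst}) gives $\alpha_\mu(\lambda^\Gamma(f)) = \lambda^\Gamma(\check\mu f)$. A short character-pairing computation produces $\widehat{\check\mu f} = \mu\ast\hat f$, so
$$\Sigma\circ S_{\psi_\mu}^\theta\!\left(\int_G g(t)\,\pi^\theta(\lambda^\Gamma(f))\,\lambda_t^\theta\, dt\right) = M_{\mu\ast\hat f}\,\lambda^G(g).$$
On the other side, $\Theta(\tilde\mu)$ is a $\vn(G)$-bimodule map (since $\rho_r^G$ commutes with $\lambda_t^G$), and (\ref{eq_thetamu}) yields $\Theta(\tilde\mu)(M_{\hat f}) = M_{\mu\ast\hat f}$. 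Hence $\Theta(\tilde\mu)(M_{\hat f}\lambda^G(g)) = M_{\mu\ast\hat f}\lambda^G(g)$, matching $\cl E_{\psi_\mu}$ on the dense family, and weak* continuity of both maps closes (i).

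For part (ii), $F_{\psi_u}(t)(\lambda^\Gamma(f)) = u(t)\lambda^\Gamma(f)$, and the same identification gives
$$\Sigma\circ S_{\psi_u}^\theta\!\left(\int_G g(t)\,\pi^\theta(\lambda^\Gamma(f))\,\lambda_t^\theta\, dt\right) = M_{\hat f}\,\lambda^G(ug).$$
The map $\Psi_u$ is a $\cl D_G$-bimodule map satisfying $\Psi_u(\lambda_t^G) = u(t)\lambda_t^G$; by weak* continuity $\Psi_u(\lambda^G(g)) = \lambda^G(ug)$, and bimodularity then yields $\Psi_u(M_{\hat f}\lambda^G(g)) = M_{\hat f}\lambda^G(ug)$. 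Thus $\cl E_{\psi_u}$ and $\Psi_u$ agree on the spanning family and, being weak* continuous, on all of $\cl B(L^2(G))$.

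I expect nothing conceptually difficult here: the main obstacle is simply the bookkeeping needed to unwind the three-step identification defining $\Sigma$ (Fourier conjugation on the $\Gamma$-fiber, then passage from the amplified covariant representation $(\pi^\id,\lambda^\id)$ on $L^2(G\times G)$ down to the covariant pair $(\id,\lambda^G)$ on $L^2(G)$). Once the formula $\Sigma(\cdots)=M_{\hat f}\lambda^G(g)$ is in hand, (i) and (ii) reduce respectively to the elementary Fourier identity $\widehat{\check\mu f}=\mu\ast\hat f$ and to the defining relation $\Psi_u(\lambda_t^G)=u(t)\lambda_t^G$ combined with $\cl D_G$-bimodularity.
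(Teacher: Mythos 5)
Your proposal is correct and follows essentially the same route as the paper: both compute the identification $(\id\rtimes\lambda^G)(\tilde{\cl F}\,\cdot\,\tilde{\cl F}^*)$ on the spanning operators $\int_G g(t)\pi^\theta(\lambda^\Gamma(f))\lambda_t^\theta\,dt \mapsto M_{\hat f}\lambda^G(g)$, then reduce (i) to $\widehat{\check\mu f}=\mu\ast\hat f$ together with (\ref{eq_thetamu}) and $\vn(G)$-bimodularity of $\Theta(\tilde\mu)$, and (ii) to $\Psi_u(\lambda^G(g))=\lambda^G(ug)$ together with $\cl D_G$-bimodularity, finishing by density. The only cosmetic difference is that you package the identification as a named map $\Sigma$, whereas the paper carries it out via the explicit embeddings $i$ and $j$.
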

\begin{proof}
Let $f\in C_c(G,C_c(\Gamma))$ be  given by
$f(s) = f_0(s) g$, for a certain $g\in C_c(\Gamma)$ and a certain $f_0\in C_c(G)$. Let $i:C_c(G,C_c(\Gamma))\to C^*(\Gamma)\rtimes_{\alpha,\theta} G $ be the embedding map given by
$$i(h)\xi(t)=\int_G\alpha_{-t}(\lambda^\Gamma(h(s)))\lambda_s^{\theta}\xi(t)ds, \ \ \xi\in L^2(G\times\Gamma),
h\in C_c(G\times \Gamma).$$
We have
$$i(f)\xi(t)=\int_G\alpha_{-t}(\lambda^\Gamma(g))f_0(s)\lambda_s^{\theta}\xi(t)ds$$
and, by (\ref{eq_tobeu}) and (\ref{eq_towli}),
\begin{eqnarray}\label{eq_iotaf}
\tilde{\cl F}i(f)\tilde{\cl F}^*\xi(t)
& = & \int_G{\cl F}^*\alpha_{-t}(\lambda^\Gamma(g)){\cl F}f_0(s)\lambda_s^{\theta}\xi(t)ds\nonumber\\
&=&\int_G\beta_{-t}({\cl F}^*(\lambda^\Gamma(g)){\cl F})f_0(s)\lambda_s^{\theta}\xi(t)ds\\
&=&\int_G\beta_{-t}(M_{\hat g})f_0(s)\lambda_s^{\theta}\xi(t)ds.\nonumber
\end{eqnarray}

Let  $T, T_{\mu} : G\to \cl C$ be the maps
given by $T(s) = M_{f_0(s)\hat g}$ and $T_\mu(s) = M_{f_0(s) \mu\ast \hat{g}}$, $s\in G$;
clearly, $T,T_{\mu}\in L^1(G,\cl C)$.
Let $j : L^1(G,\cl C)\to \cl C\rtimes_{\beta,\id} G$ be the canonical injection.
By (\ref{eq_iotaf}), $\tilde{\cl F}i(f)\tilde{\cl F}^* = j(T)$ and
$\tilde{\cl F}i(\psi_\mu f)\tilde{\cl F}^* = j(T_{\mu})$.

(i) Note that
$$S_{\psi_\mu}^{\theta}(i(f)) = i (\psi_\mu f) = i(f_0\otimes (\check{\mu}g)).$$
We have
\begin{eqnarray*}
\cl E_{\psi_{\mu}}(M_{\hat{g}}\lambda^G(f_0))
& = &
\cl E_{\psi_{\mu}}((\id\rtimes\lambda^G)(j(T)))
= \cl E_{\psi_{\mu}}((\id\rtimes\lambda^G)(\tilde{\cl F}i(f)\tilde{\cl F}^*)) \\
&=&(\id\rtimes\lambda^G)(\tilde{\cl F} S_{\psi_\mu}^\theta(i(f))\tilde{\cl F}^*)
= (\id\rtimes\lambda^G)(\tilde{\cl F} i(f_0\otimes\check\mu g)\tilde{\cl F}^*)\\
& = &
(\id\rtimes\lambda^G)(j(T_\mu)) = M_{\mu\ast\hat{g}}\lambda^G(f_0).
\end{eqnarray*}
By (\ref{eq_thetamu}) and the modularity of $\Theta(\mu)$ over $\vn(G)$ we now have
$$\cl E_{\psi_{\mu}}(M_{\hat{g}}\lambda^G(f_0))
= M_{\mu\ast\hat{g}}\lambda^G(f_0)=M_{\tilde\mu\cdot\hat{g}}\lambda^G(f_0)
= \Theta(\tilde\mu)(M_{\hat{g}}\lambda^G(f_0)).$$
Since the operators of the form $M_{\hat{g}}\lambda^G(f_0)$ span a dense
subspace of $\cl K(L^2(G))$, it follows that $\cl E_{\psi_{\mu}} = \Theta(\tilde\mu)$.

(ii)
Similarly to (i), we have
$$\cl E_{\psi_u}(M_{\hat g}\lambda^G(f_0))
= (\id\rtimes\lambda^G)(\tilde{\cl F}i(uf_0\otimes g)\tilde{\cl F}^*)
= M_{\hat g}\lambda^G(uf_0).$$
Since, by \cite{j}, $\lambda^G(uf_0) = \Psi_u(\lambda^G(f_0))$, and $\Psi_u$ is a $\cl C$-bimodule map,
we obtain that
$\cl E_{\psi_u}(M_{\hat g}\lambda^G(f_0)) = \Psi_u((M_{\hat g}\lambda^G(f_0))$.
The statement now follows by the density of the linear span of the operators
of the form $M_{\hat g}\lambda^G(f_0)$ in $\cl K(L^2(G))$.
\end{proof}

\begin{definition}
Let $(\theta,\tau)$  be a covariant representation of the dynamical system $(A,G,\alpha)$. We say that $F:G\to CB(A)$ is a Herz-Schur $(\theta,\tau)$-multiplier if the map
$$\theta(a)\tau_s\to \theta(F(s)(a)\tau_s, \ s\in G,\  a\in A$$
can be extended to a weak*-continuous completely bounded map on
the weak* closed hull of
$(\theta\rtimes\tau)(A\rtimes_{\alpha}G)$.
\end{definition}

Let $\frak{F}_{\id,\lambda^G}(G)$ be the set of admissible functions $\psi:G\times\Gamma\to\mathbb C$ such that the corresponding $F_{\psi}: G\to CB(\cl C)$ is a Herz-Schur $(\id,\lambda^G)$-multiplier for the
dynamical system $(\cl C, G,\beta)$.
\begin{theorem}\label{th_me2}
Let $ \cl E = \{\cl E_{\psi} : \psi\in \frak{F}_{\id,\lambda^G}(G)\}$. Then
$$\cl E = CB(\cl K(L^2(G)))\cap \{\Theta(\mu), \Psi_u : \mu\in M(G), u\in B(G)\}'.$$
In particular, $\cl E$ is a maximal abelian subalgebra of $CB(\cl K(L^2(G))$.
\end{theorem}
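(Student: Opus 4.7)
The strategy is to transport Theorem \ref{th_me} to the compact operator side, using the Fourier transform $\cl F$ and the unitary $\tilde{\cl F}=I\otimes\cl F^{*}$ to relate the dynamical system $(C^{*}(\Gamma),G,\alpha)$ with its faithful representation $\theta$ to the system $(\cl C,G,\beta)$ with the identity representation. First, by identities (\ref{eq_tobeu})--(\ref{eq_towli}), the unitary $\cl F$ conjugates $(C_{r}^{*}(\Gamma),\alpha)$ to $(\cl C,\beta)$, so $\tilde{\cl F}$ produces a weak*-continuous spatial *-isomorphism
\[
\tilde{\cl F}\bigl(C^{*}(\Gamma)\rtimes_{\alpha,\theta}^{w^{*}}G\bigr)\tilde{\cl F}^{*}=\cl C\rtimes_{\beta,\id}^{w^{*}}G.
\]
Composing with the identification $(\id\rtimes\lambda^{G})(\cl C\rtimes_{\beta}G)=\cl K(L^{2}(G))$ and passing to the weak*-closure inside $\cl B(L^{2}(G))$, one obtains a canonical identification $C^{*}(\Gamma)\rtimes_{\alpha,\theta}^{w^{*}}G\cong\cl B(L^{2}(G))$. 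Under this transport, the classes $\frak{F}_{\theta}(G)$ and $\frak{F}_{\id,\lambda^{G}}(G)$ coincide, and $\cl S$ (as introduced before Theorem \ref{th_me}) is mapped bijectively onto $\cl E$.

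Next, I would invoke the identification $CB(\cl K(L^{2}(G)))\equiv CB_{w^{*}}(\cl B(L^{2}(G)))$, via the second dual extension (using $\cl K(L^{2}(G))^{**}=\cl B(L^{2}(G))$). This turns $CB(\cl K(L^{2}(G)))$ into the transport of $CB_{w^{*}}(C^{*}(\Gamma)\rtimes_{\alpha,\theta}G)$. By Proposition \ref{p_onl2}, the assignment $u\mapsto\Psi_{u}$ matches $u\mapsto S_{u}^{\theta}$ for $u\in B(G)$, while $\mu\mapsto\Theta(\mu)$ matches $v\mapsto S_{v}^{\theta}$ for $v=\check{\tilde{\mu}}\in B(\Gamma)$; since $\mu\mapsto\check{\mu}$ is a bijection $M(G)\to B(\Gamma)$ by Bochner's theorem and $\mu\mapsto\tilde{\mu}$ is a bijection of $M(G)$, the two families $\{\Psi_{u},\Theta(\mu)\}$ and $\{S_{u}^{\theta},S_{v}^{\theta}\}$ correspond under the transport.

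Theorem \ref{th_me} then pulls back to the required equality
\[
\cl E=CB(\cl K(L^{2}(G)))\cap\{\Theta(\mu),\Psi_{u}:\mu\in M(G),\ u\in B(G)\}'.
\]
For the maximal abelian statement, $\cl E$ is abelian as the image of the abelian algebra $\cl S$; if $\Phi\in CB(\cl K(L^{2}(G)))$ lies in $\cl E'$, then in particular $\Phi$ commutes with every $\Theta(\mu)$ and every $\Psi_{u}$, so the displayed equality forces $\Phi\in\cl E$, giving $\cl E=\cl E'\cap CB(\cl K(L^{2}(G)))$.

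The main obstacle I anticipate is the bookkeeping verifying that the weak*-extendibility in Definition \ref{d_wsthe} is preserved by the transport, i.e.\ that indeed $\frak{F}_{\id,\lambda^{G}}(G)=\frak{F}_{\theta}(G)$. This reduces to the independence of Herz-Schur $\theta$-multipliers from the choice of faithful representation, in the spirit of Remark \ref{herz_schur_ind}(iii), combined with Rieffel's observation that $(\id,\lambda^{G})$ is a covariant representation of $(\cl C,G,\beta)$ whose integrated form is faithful on the reduced crossed product with image $\cl K(L^{2}(G))$.
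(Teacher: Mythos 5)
Your route is genuinely different from the paper's: you propose to deduce the statement from Theorem \ref{th_me} by transporting everything through $\tilde{\cl F}$ and the Stone--von Neumann identification $(\id\rtimes\lambda^G)(\cl C\rtimes_\beta G)=\cl K(L^2(G))$, whereas the paper does not invoke Theorem \ref{th_me} at all. Instead it reruns the commutant argument of Theorem \ref{th_commute} directly on $\cl B(L^2(G))$: commutation with the $\Psi_u$ together with the harmonic-operator support result of \cite{akt} forces $\Phi(M_a\lambda_t^G)=M_{a_t}\lambda_t^G$, and then commutation with the $\Theta(\mu)$ makes $a\mapsto a_t$ an $M(G)$-module map on $L^\infty(G)$, identified as $a\mapsto\nu_t\ast a$ by the Wendel-type theorem \cite[Theorem 3.8]{rudin}. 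Your transport, if completed, would buy a cleaner deduction; the paper's direct argument buys independence from the identification issues below.

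The gap is exactly the step you flag, and the tool you cite for it is the wrong one. Remark \ref{herz_schur_ind}(iii) says only that \emph{norm} complete boundedness of $S_F$ on the reduced crossed product is independent of the faithful representation of $A$; it says nothing about weak* extendibility, and the paper explicitly warns (before Definition \ref{d_wsthe}) that weak* extendibility is in general representation-dependent. What is actually needed is that the regular representation $\pi^{\id}\rtimes\lambda^{\id}$ of $\cl C\rtimes_\beta G\cong\cl K(L^2(G))$ on $L^2(G\times G)$, being a non-degenerate representation of an elementary C*-algebra, is unitarily equivalent to a multiple of the identity representation on $L^2(G)$; only then do the weak* closures in the two pictures ($\cl C\rtimes_{\beta,\id}^{w^*}G\subseteq\cl B(L^2(G\times G))$ versus $\overline{\cl K(L^2(G))}^{w^*}=\cl B(L^2(G))$) become isomorphic as von Neumann algebras with matching preduals, so that weak*-continuous completely bounded extensions correspond and $\frak{F}_\theta(G)=\frak{F}_{\id,\lambda^G}(G)$. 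This quasi-equivalence argument is special to the present situation (the crossed product being $\cl K$) and must be supplied explicitly; without it the asserted ``canonical identification $C^*(\Gamma)\rtimes_{\alpha,\theta}^{w^*}G\cong\cl B(L^2(G))$'' is not a spatial isomorphism and the transport of the commutant equality does not follow. Once that lemma is in place, the rest of your argument (the matching of the two families via Proposition \ref{p_onl2} and Bochner's theorem, the identification $CB(\cl K(L^2(G)))$ with the weak*-extendible maps via second duals, and the maximal-abelian conclusion) is correct.
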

\begin{proof}
The  fact that $\cl E$ is contained in the right hand side
follows from the fact that
$\cl E$ is a commutative subalgebra of $CB(\cl K(L^2(G)))$
and, by Proposition \ref{p_onl2},
contains the maps $\Theta(\mu)$ and $\Psi_u$, where $u\in B(G)$ and $\mu \in M(G)$.

To prove the reverse inclusion, we modify the arguments
in the proof of Theorem \ref{th_commute}.
Let $\Phi\in CB(\cl K)$ commute with $\Psi_u$ and $\Theta(\mu)$,
for all $\mu\in M(G)$ and all $u\in B(G)$.
The map $\Phi$ has a unique extension   to a weak* continuous completely bounded map on
$\cl B(L^2(G))$, which  will be denoted by the same symbol.
Let $t\in G$, $a\in L^\infty(G)$ and $T=M_a\lambda_t^G$. Set
$$J=\{u\in B(G): u(t)=1\}.$$
Then for $u\in J$ we have
$$\Psi_u(M_a\lambda_t^G)=u(t)M_a\lambda_t^G=M_a\lambda_t^G.$$
As $\Phi \Psi_u = \Psi_u\Phi$, we obtain
$$\Psi_u (\Phi(T))=\Phi(T).$$
By \cite{akt} and the remark before
Theorem \ref{th_commute}, we conclude that $\Phi(T)\lambda_{-t}^G\in \cl D_G$.
Therefore, there exists $a_t\in L^\infty(G)$ such that
$\Phi(M_a\lambda_t^G)=M_{a_t}\lambda_t^G$.  Let $F_t(a)=a_t$, $t\in G$.
Then $F_t$ is a linear map on $\cl D_G$.
Since $\Phi$ is completely bounded and weak* continuous,
$F_t$ is so, too.

Since $\Phi$ commutes with $\Theta(\mu)$, $\mu\in M(G)$, we have
$$\Phi(M_{\mu\cdot a}\lambda_t^G)=M_{\mu\cdot F_t(a)}\lambda_t^G,$$
giving $F_t(\mu\cdot a)=\mu\cdot F_t(a)$.
The map $F_t$ is the adjoint of a bounded linear map
$\Psi_t : L^1(G)\to L^1(G)$ such that
$\Psi_t(\mu\ast f) = \mu\ast\Psi_t(f)$, $\mu\in M(G)$, $f\in L^1(G)$.
By \cite[Theorem 3.8]{rudin},
there exists $\nu_t\in M(G)$ such that $F_t(a)=\nu_t\ast a$, $a\in L^\infty(G)$.
Let $\psi(t,x) = \check\nu_t(x)$, $t\in G$, $x\in \Gamma$.
Then $\psi \in \frak{F}_{\id,\lambda^G}(G)$ and $\Phi$ is the weak* extension of  $\cl E_{\psi}$.
\end{proof}

\begin{remark}\rm
Assume $G$ is arbitrary and let $\beta_t:\cl C\to \cl C$ be given by $\beta_t(M_a)=\lambda_t^GM_a\lambda_{t^{-1}}^G$. Then $(\cl C,G,\beta)$ is a $C^*$-dynamical system. For a measure $\mu$ we consider the map $\Theta(\mu)\in CB(L^2(G))$ given by (\ref{eq_chv}).
We have $\Theta(\mu)(\cl C)\subseteq\cl C$. Moreover, for each $r\in G$ we have $\beta_r\circ\Theta(\mu)=\Theta(\mu)\circ\beta_r$:
\begin{eqnarray*}
\beta_r(\Theta(\mu)(M_a))
& = &
\lambda_r^G\left(\int_G\rho_s^GM_a\rho_{s^{-1}}^Gd\mu(s)\right) \lambda_{r^{-1}}^G\\
&=&\int_G\rho_s^{G}\lambda_r^GM_a\lambda_{r^{-1}}^G\rho_{s^{-1}}d\mu(s)=\Theta(\mu)(\beta_r(M_a)).
\end{eqnarray*}
Hence $\Theta(\mu)$ gives rise to a completely bounded map on
$\cl C\rtimes_{\beta,r}G$, {\it i.e. } the function $F$, given by
$F(t)(M_a):=\Theta(\mu)(M_a)$, is a Herz-Schur $(\cl C, G, \beta)$-multiplier.
For $\Lambda=\{\mu_t\}_{t\in G}$  we let $F_\Lambda(t)(M_a)=\Theta(\mu_t)(M_a)$.
The class of Herz-Schur multipliers  $F_\Lambda$  includes the convolution
multipliers examined in the present section, whose study will be pursued elsewhere. 
\end{remark}

\noindent {\bf Acknowledgement.} We would like to thank Sergey Neshveyev and Adam Skalski for
many helpful conversations during the preparation of this paper.

\end{document}